\newtheorem{theorem}{\textbf{Theorem}}
\newtheorem{corollary}{\textbf{Corollary}}
\newtheorem{definition}{\textbf{Definition}}
\newtheorem{lemma}{\textbf{Lemma}}
\newtheorem{remark}{\textbf{Remark}}
\newtheorem{example}{\textbf{Example}}
\newtheorem{proof}{\textbf{Proof}}
\newtheorem{algorithm}{\textbf{Algorithm}}
\begin{document}
%
\title{Convergence of a $\theta$-scheme to solve the stochastic nonlinear Schr\"odinger equation with Stratonovich noise}
\author{Chuchu Chen$^{\dag}$\thanks{$^{\dag}$Institute of Computational Mathematics and
Scientific/Engineering Computing, Chinese Academy of Sciences, Beijing, PR China. ({\tt chenchuchu@lsec.cc.ac.cn})({\tt hjl@lsec.cc.ac.cn}).},
 Jialin Hong$^{\dag}$ and Andreas Prohl$^{\S}$\thanks{$^{\S}$Mathematisches Institut,
  Universit\"at T\"ubingen, Auf der Morgenstelle 10, D-72076 T\"ubingen, Germany. ({\tt prohl@na.uni-tuebingen.de}).}}
%
%

%

%

\maketitle
\begin{abstract}
We propose a $\theta$-scheme to discretize the $d$-dimensional stochastic cubic Schr\"odinger equation in Stratono\-vich sense.
{A uniform bound for the Hamiltonian of the discrete problem is obtained, which is a crucial property to verify the  convergence in probability  towards a mild solution.}
 Furthermore, based on the uniform bounds of iterates in ${\mathbb H}^2(\mathcal{O})$ for $\mathcal{O}\subset\mathbb{R}^{1}$,
 the convergence order $\frac12$ in strong local sense is obtained.

 \begin{IEEEkeywords}
stochastic nonlinear Schr\"odinger equation, Stratonovich noise, temporal discretization, $\theta$-scheme, rates of convergence
\end{IEEEkeywords}

\end{abstract}
\section{Introduction}
Let ${\mathscr O} \subset {\mathbb R}^d$ be a bounded domain with $C^{2}$  boundary.
We study different discretizations for the following stochastic cubic Schr\"odinger equation with multiplicative noise of Stratonovich type ($\lambda \in \{-1,1\}$),
\begin{align}\label{sdd1}
i d\psi + \bigl(\Delta \psi  + \lambda \vert \psi \vert^2 \psi \bigr) dt &= \psi \circ dW(t) \qquad \mbox{in } {\mathscr O}_T :=\mathcal{O}\times(0,T)\,, \nonumber\\
\psi&=0 \qquad\qquad \quad\;\;  \mbox{on } \partial \mathcal{O}\times (0,T)\, ,\\
\psi(0) &= \psi_0 \qquad\qquad\quad\; \mbox{in } \mathcal{O} \,.\nonumber
\end{align}
Here, $W$ denotes a real-valued trace-class ${\bf Q}$-Wiener process.
This problem was e.g.~studied in \cite{RGBC1} to motivate the possible role of noise
to  prevent or delay collapse formation; see also \cite{DDM1} for the case $\lambda=1$.
It is due to the special type of the multiplicative noise that the mass of solutions of \eqref{sdd1} is preserved $P$-a.s.,
\begin{align}\label{L2}
\|\psi(t)\|_{\mathbb{L}^2}=\|\psi_0\|_{\mathbb{L}^2} \qquad \forall \; t\in[0,T],
  \end{align}
   which is similar to the deterministic case.
    For the deterministic cubic Schr\"odinger equation, the Hamiltonian $\mathcal{H}(\psi)=\frac12\int_{\mathcal{O}}|\nabla\psi|^2dx-\frac{\lambda}{4}
    \int_{\mathcal{O}}|\psi|^4dx$
  is another invariant quantity, which is also essential to construct a solution to this problem. In the stochastic case \eqref{sdd1}, it is no longer preserved and satisfies (see \cite{DBD0a})
  \begin{equation}\label{H}
  \mathcal{H}(\psi(t))=\mathcal{H}(\psi_0)-\Im \int_{0}^{t}\int_{\mathcal{O}}\bar{\psi}\nabla \psi  d(\nabla W(s))dx+\dfrac12  \int_{0}^{t}\int_{\mathcal{O}}
  |\psi|^{2}\sum_{\ell}|\nabla {\bf Q}^{\frac12}e_{\ell}|^{2}dxds \qquad P-a.s.
  \end{equation}
  Corresponding uniform bounds for its expectation in the case of Galerkin approximations of \eqref{sdd1} and ${\mathscr O} = {\mathbb R}^d$ then allow a compactness argument to construct a global $\mathbb{H}^1$-valued mild solution for $\lambda=-1$ in \cite{DBD0a}; and for the case $\lambda=1$ with the nonlinear term being replaced by $|\psi|^{2\sigma}\psi$, the condition for global existence is
$0<\sigma <\frac{2}{d}$.

A relevant work on the numerical analysis of (\ref{sdd1})  and ${\mathscr O} = {\mathbb R}^d$ is \cite{DBD1}, where iterates $\{ \phi_R^n;\, n \in {\mathbb N}\}$ of the temporal discretization with underlying mesh of size $\tau >0$ covering $[0,T]$ are studied,
\begin{eqnarray}\nonumber
&&i \bigl(\phi^{n+1}_R - \phi^n_R\bigr) + \tau \Delta \phi^{n+1/2}_R + \frac{\lambda \tau}{2}  \bigl(\vert \phi^{n+1}_R\vert^2 +
\vert \phi^n_R\vert^2 \bigr) \phi_R^{n+1/2} \\ \label{wio1}
&&\qquad = \theta_R(\phi_R^n) \theta_R(\phi_R^{n+1})\phi_R^{n+1/2} \Delta_n W \qquad (n \geq 0)\,, \qquad
\phi_R^0 = \psi_0\, ,
\end{eqnarray}
where $\phi_{R}^{n+\frac12}=\frac12\big(\phi_{R}^{n}+\phi_{R}^{n+1}\big)$ and $\Delta_n W=W(t_{n+1})-W(t_{n})$.
This scheme is constructed in a way that iterates preserve the ${\mathbb L}^2$-norm, i.e., ${ P}$-a.s.
$\Vert \phi^n_R\Vert_{{\mathbb L}^2} = \Vert \psi_0\Vert_{{\mathbb L}^2}$ for $n \in {\mathbb N}$. However, such a bound is not sufficient for the use of compactness methods to construct the ${\mathbb H}^1$-valued solution of (\ref{sdd1}), which requires a uniform bound for the Hamiltonian $ {\mathscr H}( \phi^n_R) := \frac{1}{2} \int_{\mathcal{O}} \vert \nabla \phi^n_R\vert^2\, {d}{ x} -
\frac{\lambda}{4} \int_{\mathscr O} \vert \phi^n_R\vert^4\, {d}{ x}$ for every finite time $T>0$, i.e.,
\begin{equation}\label{sdd2}
{\mathbb E}\bigl[\max_{0 \leq n \leq [\frac{T}{\tau}]} {\mathscr H}( \phi^n_R)\bigr] \leq C(T).
\end{equation}
Since the scheme (\ref{wio1}) with $\theta_R \equiv 1$ is not known to yield this property, a truncation concept is applied in \cite{DBD1} where e.g.~$\theta_R (\cdot) = \rho\bigl( \frac{\Vert \cdot \Vert_{{\mathbb L}^6}}{R}\bigr)$, for
some $\rho \in C_0^{\infty}\bigl((-1,1); [0,1] \bigr)$ such that $\rho \bigl\vert_{[-\frac{1}{2}, \frac{1}{2}]} \equiv 1$, and some fixed $R\in{\mathbb R}^{+}$;
in this case, the right-hand side in \eqref{sdd2} needs to be replaced by a constant $C_{R}(T)$.
By tending $\tau^{-1},R \rightarrow \infty$, it is shown in \cite[Theorem 2.2]{DBD1} that iterates  construct the mild solution of (\ref{sdd1}), where the convergence of the iterates is in probability sense.

This practical construction of the mild solution of (\ref{sdd1}) is valid for initial data $\psi_0$ having a finite Hamiltonian, and a given real-valued trace-class ${\bf Q}$-Wiener process. In \cite{DBD2},
the authors study rates of convergence of the following different time semi-discretization
\begin{equation}\label{sdd3}
i \bigl( \phi^{n+1}_R - \phi^n_R \bigr) - \tau \Delta \phi^{n+1/2}_R -  \frac{\lambda \tau}{2} \theta_R \bigl( \phi^{n}_R\bigr)
\theta_R \bigl( \phi^{n+1}_R\bigr)  \bigl(\vert \phi^{n+1}_R\vert^2 +
\vert \phi^n_R\vert^2 \bigr) \phi_R^{n+1/2} = \phi^{n}_R \Delta_n W
\end{equation}
to approximate the stochastic Schr\"odinger equation in It\^o sense
\begin{equation}
i d\psi - \big(\Delta \psi +\lambda \vert \psi \vert^{2} \psi\big)dt = \psi dW(t) \quad \text{ on }  (0,T) \times {\mathbb R}^d, \qquad \psi(0)=\psi_0,
\end{equation}
for more regular initial data  $\psi_0\in\mathbb{H}^{\frac32+s}$, $s>\max\{ \frac{d}{2},1 \}$, and a more regular ${\bf Q}$-Wiener process $W$.
The view-point to achieve this goal is different to the one above: a truncation $\theta_{R}(\cdot)$ with $R>0$ of the drift term
 is employed which hinders a (direct) bound for the Hamiltonian but allows to apply  semigroup methods for the convergence analysis of this semilinear SPDE with Lipschitz drift: for $\psi_0 \in {\mathbb H}^{\frac{3}{2} + s}$, $s > \max\{ \frac{d}{2},1\}$, the (locally) existing
 mild solution $\psi$ is approximated at a rate $\frac{1}{2}$ in the following sense,
 \begin{equation}\label{eqq1}
  \lim_{C\rightarrow\infty}
P\Big[\max_{n=0,\cdots, K_{\tau^{*}}}\|\phi^{n}-\psi(t_{n})\|_{{\mathbb H}^{s}}\geq C\tau^{\frac12}\Big]=0,
\end{equation}
 see \cite[Theorem 5.6]{DBD2}.

A further step towards constructing efficient discretizations of \eqref{sdd1} is the work \cite{L1} which uses a Lie-type time-splitting method.
This scheme amounts to solving a family of timely explicitly discretized SODEs for all ${ x} \in {\mathbb R}^d$, and a
linear PDE with random force.
Iterates $\{ \xi^n;\, n \in {\mathbb N}\}$ preserve mass, but again no uniform bounds for the Hamiltonian are
known to hold in the case  $\psi_0 \in {\mathbb H}^1$, thus leaving unclear convergence behavior towards
a solution of (\ref{sdd1}) under minimum regularity requirements. However,
some strong rates are obtained in the presence of regular data.
 The strategy to validate this result is again based on a proper truncation argument.

The main goal of this work is to propose and study a new discretization (\ref{sdd21}) of (\ref{sdd1}) which inherits a
uniform estimate for the related Hamiltonian,
\begin{equation}\label{sdd21}
i \bigl(\phi^{n+1} - \phi^n\bigr) + \tau\big(\theta \Delta \phi^{n+1}+(1-\theta)\Delta\phi^{n}\big) + \frac{\lambda \tau}{2}  \bigl(\vert \phi^{n+1}\vert^2 +
\vert \phi^n\vert^2 \bigr) \phi^{n+1/2} = \phi^{n+1/2} \Delta_n W \qquad (n \geq 0)\, .
\end{equation}
 For the case {$\theta\in[\frac12+c\sqrt{\tau}),1]$ with $c\geq c^{*}>0$}, and $\mathcal{O}\subset\mathbb{R}^{d}$ a bounded Lipschitz domain, $\lambda=-1$, and initial data $\psi_0\in L^2(\Omega;\mathbb{H}_0^1(\mathcal{O}))$, iterates $\{ \phi^n;\, n \in {\mathbb N}\}$ satisfy
 \begin{equation}\label{sdd20}
{ E}\bigl[\max_{0 \leq n \leq [\frac{T}{\tau}]} {\mathscr H}( \phi^n)\bigr] \leq  C(c^*,T)\, .\qquad
\end{equation}
In order to derive this result, we multiply \eqref{sdd21} with $\bar\phi^{n+1}-\bar\phi^{n}$, integrate in space and then take the real part of the resulting equality.
It is then obvious from the stability analysis which leads to Lemma \ref{alg2:H1} that the parameter $\theta$ has to be chosen from the range
{$[\frac12+c\sqrt{\tau},\;1]$ $(c\geq c^{*}>0)$} to generate enough numerical dissipativity to control discretization effects of the noise term.
{This uniform boundedness of the discrete Hamiltonian allows a brief and concise approach by a compactness argument which constructs a family of solution processes related to \eqref{sdd21} converging to the mild solution of \eqref{sdd1} for ${\mathcal O}\subset{\mathbb R}^{1}$; see Remark~\ref{r-two}.
No additional truncation concept is needed here --- which is a relevant tool in \cite{DBD1,DBD2} (see also (\ref{wio1}) and (\ref{sdd3})) to compensate for the lack of (\ref{sdd20}) in the case $\theta = \frac{1}{2}$; we remark that the involved truncation and discretization parameters require a proper balancing for the convergence proof in \cite{DBD1,DBD2}.}
Finally,
 Lemmas \ref{alg2:L2'} and \ref{alg2:L2} favor the choice {$\theta=\frac12+ \sqrt{\tau}$} in order to guarantee an approximate conservation of the expectation of the $\mathbb{L}^2$-norm of iterates.

%

In the second part of this work, we study pathwise approximation of the solution \eqref{sdd1}, which  requires initial data $\psi_0\in L^{8}(\Omega;\;\mathbb{H}_0^1\cap \mathbb{H}^2)$. In particular, we are interested in the concept of local rates of convergence for iterates of \eqref{sdd21}, see \cite{CP}, which is stronger than that of rates in probability given above, and requires to  deal with the discretization of the nonlinear drift term directly.
A relevant prerequisite for this purpose is to provide strong stability results for the non-truncated original problem \eqref{sdd1}, and also for the discretization \eqref{sdd21}. However, it is due to the interaction of the cubic nonlinearity with the stochastic term that we are only able to provide the corresponding uniform bounds in higher spatial norms  for $d=1$. These estimates are then essential for the error analysis, which allows to establish optimal strong convergence rates on large subsets of $\Omega$ (see Theorem \ref{thm1}).
An immediate consequence of this result is the following version of rates of convergence in probability (see Corollary \ref{probability}),
\begin{equation}\label{eqq2}
  \exists\; C>0:\qquad \lim_{\tau\rightarrow 0}P\Big[\max_{0\leq n\leq M}\|\psi(t_{n})-\phi^{n}\|_{{\mathbb L}^2}\geq C\tau^{\alpha}\Big]=0,
\end{equation}
for all $\alpha<\frac{1}{2}$. Note that $C$ is a constant which does not depend on $\alpha$ and $\tau$.

This paper is organized as follows. In section \ref{preliminaries}, some preliminaries are stated, including the notion of a mild solution of \eqref{sdd1} and some properties of the linear Schr\"odinger semigroup $\{S(t);\; t\geq 0\}$. In section \ref{sec2}, uniform bounds in higher `spatial' norms, together with the H\"older continuity in time for solutions $\{\psi(t);\;t\in[0,T]\}$ of equation \eqref{sdd1} are obtained. In section \ref{sec3}, the bound
(\ref{sdd20}) for iterates $\{\phi^{n};\;0\leq n\leq M\}$ of (\ref{sdd21}) is shown ($d\geq 1$), and uniform bounds in higher spatial norms are proven ($d=1$).
These results in sections \ref{sec2} and \ref{sec3} are used in section \ref{sec4} to establish strong rate of convergence $\frac12$ for iterates of \eqref{sdd21} in local sense, and in the probability sense
(\ref{eqq2}) for $\mathcal{O}\subset{\mathbb R}^{1}$ as a simple consequence. Some computational studies are presented in section VI which complement the theoretical results.

\section{Preliminaries}\label{preliminaries}

Throughout this work, let $W$ be a ${\bf Q}$-Wiener process defined on a given filtered probability space $(\Omega,\mathcal{F},\{\mathcal{F}_{t}\}_{0\leq t\leq T},P)$, with values
in the real-valued Hilbert space ${\mathbb U}={\mathbb L}^{2}(\mathcal{O},\mathbb{R})$. Here ${\bf Q}\in \mathcal{L}({\mathbb U})$ is a non-negative, symmetric operator with finite trace.

 Equation  (\ref{sdd1}) with $\lambda=-1$ has an equivalent It\^o form (see \cite{DBD0a})
\begin{align}\label{S_NLSE_Ito}
 & id\psi+\Delta \psi dt-(|\psi|^{2}\psi-\frac{i}{2}\psi F_{\bf Q})dt=\psi dW(t).
\end{align}
Here
 $ F_{\bf Q}(x)=\sum_{\ell\in \mathbf{N}}({\bf Q}^{\frac12}e_{\ell}(x))^{2} \text{ for } x\in \mathcal{O},$
 with $\{e_{\ell}\}_{\ell\in\mathbb{N}}$ being an orthonomal basis of ${\mathbb U}$.

To study \eqref{S_NLSE_Ito}, we introduce $\mathcal{L}_{2}({\mathbb U},\;{\mathbb H})$, the space of Hilbert-Schmidt operators from Hilbert space ${\mathbb U}$ to another Hilbert space ${\mathbb H}$, where the corresponding norm is defined by
$\|{\bf Q}^{\frac12}\|_{\mathcal{L}_{2}({\mathbb U},\;{\mathbb H})}=\Big(\sum_{\ell\in\mathbb{N}}\|{\bf Q}^{\frac12}e_{\ell}\|_{{\mathbb H}}^{2}\Big)^{\frac12}.$
In the following analysis, we always assume ${\bf Q}^{\frac12}\in\mathcal{L}_{2}({\mathbb U},\;\mathbb{H}^{3}(\mathcal{O}))$.

We recall the mild solution concept for the It\^o equation \eqref{S_NLSE_Ito} from \cite{DBD0a,DBD2}.
\begin{definition}\label{mild_sol}
  An ${\mathbb H}_{0}^{1}$-valued $\{\mathcal{F}_{t}\}_{0 \leq t \leq T}$-adapted process $\{\psi(t);\; t\in[0,T]\}$, is called a mild solution of problem  \eqref{S_NLSE_Ito} if
  for $\forall\; t\in[0,T]$ holds P-a.s.
   \begin{equation}\label{integral_form_solution}
   \psi(t)=S(t)\psi(0)-i\int_{0}^{t}S(t-r)|\psi(r)|^{2}\psi dr-\frac12\int_{0}^{t}S(t-r)\psi(r)F_{{\bf Q}}dr-i\int_{0}^{t}S(t-r)\psi(r)dW(r),
 \end{equation}
 where $S\equiv\{S(t);\; t\in\mathbb{R}\}$, with $S(t)=e^{it\Delta}$ denotes the semigroup of the solution operator of the deterministic linear differential equation
\begin{equation}\label{linear}
  i\;\frac{d\psi}{dt}+\Delta \psi=0 \qquad \text{in } \mathcal{O}_{T},\qquad \psi=0\qquad\text{on }\partial\mathcal{O}\times(0,T),\qquad \psi(0)=\psi_0\qquad \text{in }\mathcal{O}.
\end{equation}
\end{definition}

\begin{remark}
  Due to the regularity estimate given in Corollary \ref{spatial H1}, and to \cite[Proposition F.0.5, (ii)]{PR}, we also have the following representation for the mild solution of \eqref{S_NLSE_Ito}: for every $t\in[0,T]$, and all $z\in{\mathbb H}_0^1$, there holds  $P$-a.s.
  \begin{align}\label{variational}
    i\int_{{\mathcal O}}\psi(t)zdx-\int_{0}^{t}\int_{{\mathcal O}}\nabla\psi\nabla zdxds-\int_0^t \int_{{\mathcal O}}\big(|\psi|^2\psi-\frac{i}{2}\psi F_{\mathbf Q}\big)zdxds=i\int_{{\mathcal O}}\psi_0 zdx+\int_{0}^{t}\int_{\mathcal O}\psi zdW(s)dx.
  \end{align}
  We will use this form in the error analysis in section \ref{sec4}.
\end{remark}
We end this section with some useful properties of $\{S(t);\;t\geq 0\}$, which will be needed in Lemma \ref{temporal L2} and Lemma \ref{temporal H1 new} (see \cite{DBD2} for a corresponding study in the case $\mathcal{O}={\mathbb R}^{d}$).

In the following, the constant $K>0$ differs from line to line; it depends on the initial value $\psi_0$, $T$, ${\bf Q}^{\frac12}$, and $\mathcal{O}$, but not on $\tau$, $n$.

\begin{lemma}\label{lemma1}
  The semigroup $\{S(t);t\geq 0\}$ is an isometry in ${\mathbb L}^2(\mathcal{O})$, and it holds that
  \begin{align*}
   \|S(t)-Id\|_{\mathcal{L}(\mathbb{H}_0^1,\;{\mathbb L}^2)}\leq Kt^{\frac12},\label{S10}
\end{align*}
{where $K$ does not depend on $t$.}
\end{lemma}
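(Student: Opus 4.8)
The plan is to split the statement into its two parts and to treat the second, quantitative, bound by diagonalizing the Dirichlet Laplacian. The isometry property is just the conservation of mass for the linear flow \eqref{linear}: since $-\Delta$ with homogeneous Dirichlet boundary conditions is a nonnegative self-adjoint operator on $\mathbb{L}^2(\mathcal{O})$, the generator $i\Delta$ is skew-adjoint, so $S(t)=e^{it\Delta}$ is a unitary group and $\|S(t)u\|_{\mathbb{L}^2}=\|u\|_{\mathbb{L}^2}$ for every $u$ and $t$. Equivalently, one multiplies \eqref{linear} by $\bar\psi$, integrates over $\mathcal{O}$, and takes the imaginary part to obtain $\frac{d}{dt}\|\psi\|_{\mathbb{L}^2}^2=0$.

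For the estimate $\|S(t)-Id\|_{\mathcal{L}(\mathbb{H}_0^1,\mathbb{L}^2)}\le Kt^{\frac12}$ I would work in the eigenbasis. Let $\{\phi_k\}_{k\in\mathbb{N}}$ be an $\mathbb{L}^2$-orthonormal basis of Dirichlet eigenfunctions with eigenvalues $0<\mu_1\le\mu_2\le\cdots$, so that $S(t)\phi_k=e^{-it\mu_k}\phi_k$. Writing $u=\sum_k c_k\phi_k\in\mathbb{H}_0^1$, orthonormality gives
\[
\|S(t)u-u\|_{\mathbb{L}^2}^2=\sum_k |e^{-it\mu_k}-1|^2\,|c_k|^2,
\]
while integration by parts, using the Dirichlet condition, yields $\|\nabla u\|_{\mathbb{L}^2}^2=\sum_k\mu_k|c_k|^2$. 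The heart of the matter is then the elementary scalar bound $|e^{-i\theta}-1|^2=2(1-\cos\theta)=4\sin^2(\theta/2)\le 2|\theta|$ valid for all $\theta\in\mathbb{R}$ (treat separately the regime $|\theta|\le 2$, where $4\sin^2(\theta/2)\le\theta^2\le 2|\theta|$, and $|\theta|\ge 2$, where the left side is at most $4\le 2|\theta|$). Applying this with $\theta=t\mu_k$ gives $|e^{-it\mu_k}-1|^2\le 2t\mu_k$, whence
\[
\|S(t)u-u\|_{\mathbb{L}^2}^2\le 2t\sum_k\mu_k|c_k|^2=2t\,\|\nabla u\|_{\mathbb{L}^2}^2\le 2t\,\|u\|_{\mathbb{H}_0^1}^2.
\]
Taking the supremum over $\|u\|_{\mathbb{H}_0^1}\le 1$ yields the claim with $K=\sqrt2$, manifestly independent of $t$.

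The only genuinely delicate point is the correct matching of powers. One is tempted to use the sharper bound $|e^{-i\theta}-1|\le|\theta|$, but this costs a full derivative and produces the rate $t$ paired with the $\mathbb{H}^2$-norm, which is unavailable for $u\in\mathbb{H}_0^1$. The square-root rate is exactly what the half-derivative gap between $\mathbb{H}_0^1$ and $\mathbb{L}^2$ permits, and it is recovered precisely through the interpolated inequality $|e^{-i\theta}-1|^2\le 2|\theta|$ above; equivalently, the whole estimate is the functional-calculus identity $\|(S(t)-Id)(-\Delta)^{-\frac12}\|_{\mathcal{L}(\mathbb{L}^2)}=\sup_{\mu>0}|e^{-it\mu}-1|\mu^{-\frac12}\le\sqrt{2t}$, which makes the sharpness of the exponent transparent and confirms that no regularity beyond $\mathbb{H}_0^1$ is needed.
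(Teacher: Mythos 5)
Your proof is correct, but it takes a genuinely different route from the paper. For the difference estimate, the paper never diagonalizes the Laplacian: it tests the time-integrated weak formulation of \eqref{linear} with $\xi=\bar\psi(t)$, takes imaginary parts to produce the identity $\frac12\big(\|\psi(t)\|_{\mathbb{L}^2}^2-\|\psi_0\|_{\mathbb{L}^2}^2+\|\psi(t)-\psi_0\|_{\mathbb{L}^2}^2\big)=\Im\int_0^t\int_{\mathcal{O}}\nabla\psi(\lambda)\nabla\bar\psi(t)\,dx\,d\lambda$, and then uses the $\mathbb{L}^2$- and $\mathbb{H}_0^1$-isometries (the latter obtained by testing with $\Delta\bar\psi$) to cancel the first two terms and bound the right-hand side by $K\|\psi_0\|_{\mathbb{H}^1}^2 t$. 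Your argument instead expands in the Dirichlet eigenbasis and reduces everything to the scalar inequality $|e^{-i\theta}-1|^2\le 2|\theta|$, equivalently the functional-calculus bound $\|(S(t)-Id)(-\Delta)^{-1/2}\|_{\mathcal{L}(\mathbb{L}^2)}\le\sqrt{2t}$. What your approach buys: an explicit constant $K=\sqrt2$, a transparent explanation of why the exponent $\frac12$ is exactly the $\mathbb{H}_0^1$--$\mathbb{L}^2$ gap, and (in the functional-calculus form) independence of the bounded-domain setting. What the paper's approach buys: it avoids the spectral theorem entirely, uses only the two isometries and an elementary testing identity, and is stylistically consistent with the energy-type arguments used throughout the rest of the paper (the same testing-and-take-imaginary-part device reappears in Lemmas \ref{alg2:H1} and \ref{alg2:H2}). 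One small point to tighten in your write-up: for general $u\in\mathbb{H}_0^1$ the identity $\|\nabla u\|_{\mathbb{L}^2}^2=\sum_k\mu_k|c_k|^2$ is not literally "integration by parts" (which needs more regularity) but the standard form-domain characterization $\mathbb{H}_0^1=D((-\Delta)^{1/2})$, obtained by density; this is routine and does not affect correctness.
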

\begin{proof}
  To show the isometry property of $S(t)$, we multiply \eqref{linear} by $\psi$, integrate in $\mathcal{O}_{T}$ and take the imaginary part. We get
  \[\|\psi(t)\|_{{\mathbb L}^2}=\|\psi_0\|_{{\mathbb L}^2},\]
  which implies that $\|S(t)\|_{\mathcal{L}({\mathbb L}^2,\;{\mathbb L}^2)}=1$.

  Next, let $\psi_0\in \mathbb{H}_0^1(\mathcal{O})$. By multiplying \eqref{linear} by $\Delta\bar\psi$, integrating in $\mathcal{O}_{T}$ and taking the imaginary part, we easily deduce $\|S(t)\|_{\mathcal{L}(\mathbb{H}_0^1,\;\mathbb{H}_0^1)}=1$.
  The assertion {\rm (i)} is equivalent to $\|\psi(t)-\psi_0\|_{{\mathbb L}^2}=\|\big(S(t)-Id\big)\psi_0\|_{{\mathbb L}^2}\leq K\|\psi_0\|_{\mathbb{H}^1}t^{\frac12}$.
  In fact, we may conclude from \eqref{linear} that \[i\int_{\mathcal{O}}\psi(t)\xi dx-i\int_{\mathcal{O}}\psi_0\xi dx
  =\int_{0}^{t}\int_{\mathcal{O}}\nabla\psi(\lambda)\nabla\xi dxd\lambda\qquad \forall\xi\in {\mathbb H}_{0}^{1}(\mathcal{O}).\]
 We choose $\xi=\bar\psi(t)$, and take the imaginary part to get
  \begin{align*}
    \frac12\Big(\|\psi(t)\|_{\mathbb{L}^2}^2-\|\psi_0\|_{\mathbb{L}^2}^2+\|\psi(t)-\psi_0\|_{{\mathbb L}^2}^2\Big)=&\Im\int_{0}^{t}\int_{\mathcal{O}}\nabla\psi(\lambda)\nabla\bar\psi(t) dx d\lambda\\
    \leq &\int_{0}^{t}\|\nabla\psi(\lambda)\|_{\mathbb{L}^2}\|\nabla\psi(t)\|_{\mathbb{L}^2}d\lambda
    \leq K\|\psi_0\|_{{\mathbb H}^1}^2t.\nonumber
  \end{align*}
  The proof of the assertion is finished.
\end{proof}

\section{Stability results in higher norms for more regular initial data}\label{sec2}
In this section, we study stability properties of solutions of \eqref{sdd1} with $\lambda=-1$.
  A formal application of It\^o's formula shows that the pathwise $\mathbb{L}^2$-norm of the solution of \eqref{sdd1} is preserved as in the deterministic case. The Hamiltonian $\mathcal{H}(\psi)$, however, is no longer preserved for \eqref{sdd1}, but one can obtain its boundedness in ${ L}^p(\Omega)$ for any finite time $T>0$; see Lemma \ref{lemma3}. For $\psi_0\in { L}^p(\Omega;\mathbb{H}_{0}^{1}\cap\mathbb{H}^2(\mathcal{O}))$ and $\mathcal{O}\subset \mathbb{R}^{1}$, we show that the solution is also $\mathbb{H}_{0}^{1}\cap\mathbb{H}^2(\mathcal{O})$-valued and that its ${ L}^p(\Omega;{L}^{\infty}(0,T;\mathbb{H}_{0}^{1}\cap\mathbb{H}^2(\mathcal{O})))$-norm is bounded; see Lemma \ref{spatial E max}. Those bounds in strong (spatial) norms for the mild solution of \eqref{sdd1} may be used to prove H\"older regularity with respect to time in strong norms;
 see Lemma \ref{temporal L2} and \ref{temporal H1 new}. They are useful in section \ref{sec4} to establish
  rates of convergence for the $\theta$-scheme \eqref{sdd21}.

In the following lemmas, the application of It\^o formula is formal; the argument can, however, be made rigorous by using a truncated version of \eqref{S_NLSE_Ito}, and passing to the limit after It\^o's formula has been applied; we refer to \cite{DBD0a} for a corresponding argumentation.

\begin{lemma}\label{lemma3}
Let $\mathcal{O}\subset\mathbb{R}^d$ be a bounded Lipschitz domain, $\mathcal{H}(\psi_0)\in L^p(\Omega)$ for some $p\geq 1$  with $\psi_0 = 0$ on $\partial {\mathscr O}$, and  {\color{black} and $\psi$ be a mild solution of \eqref{S_NLSE_Ito}}. Then there exists a constant
 {$K\equiv K\big(p,T\big)>0$} such that
 \begin{align*}
   & {\rm (i)}\quad \sup_{0\leq t\leq T}E\Big[\big(\mathcal{H}(\psi(t))\big)^{p}\Big]\leq K,\\
   & {\rm (ii)}\quad E\Big[\sup_{0\leq t\leq T}\big(\mathcal{H}(\psi(t))\big)^{p}\Big]\leq K.
 \end{align*}
\end{lemma}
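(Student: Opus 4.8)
The plan is to derive an evolution law for the $p$-th power of the Hamiltonian by applying It\^o's formula to $t\mapsto\big(\mathcal{H}(\psi(t))\big)^p$, and then to estimate the resulting drift and martingale contributions using mass conservation. Throughout I use that $\lambda=-1$ renders the Hamiltonian nonnegative; in fact $\mathcal{H}(\psi)\geq\frac12\|\nabla\psi\|_{\mathbb{L}^2}^2\geq 0$, and this positivity is what permits the quartic term to be simply discarded in the coercivity estimates below. I write the balance \eqref{H} in differential form as $d\mathcal{H}(\psi)=dM_s+dA_s$, where $M$ is the martingale (the $\Im$-stochastic integral against $\nabla W$) and $A_s=\frac12\int_0^s\int_{\mathcal O}|\psi|^2\sum_\ell|\nabla{\bf Q}^{\frac12}e_\ell|^2\,dx\,dr$ is of bounded variation. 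As noted before the lemma, the formal use of It\^o's formula is made rigorous through truncation and passage to the limit.

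The two key a priori estimates concern $A$ and the quadratic variation $\langle M\rangle$. Since ${\bf Q}^{\frac12}\in\mathcal{L}_2(\mathbb{U},\mathbb{H}^3)$, Sobolev embedding gives $\big\|\sum_\ell|\nabla{\bf Q}^{\frac12}e_\ell|^2\big\|_{\mathbb{L}^\infty}\leq C_{\bf Q}$, while mass conservation \eqref{L2} yields $\|\psi(s)\|_{\mathbb{L}^2}=\|\psi_0\|_{\mathbb{L}^2}$. Hence $dA_s\leq\frac12 C_{\bf Q}\|\psi_0\|_{\mathbb{L}^2}^2\,ds$. Expanding $M$ over the eigenbasis of ${\bf Q}$ and using $\big|\Im\int_{\mathcal O}\bar\psi\nabla\psi\cdot\nabla{\bf Q}^{\frac12}e_\ell\,dx\big|\leq\|\nabla{\bf Q}^{\frac12}e_\ell\|_{\mathbb{L}^\infty}\|\psi\|_{\mathbb{L}^2}\|\nabla\psi\|_{\mathbb{L}^2}$ together with $\|\nabla\psi\|_{\mathbb{L}^2}^2\leq 2\mathcal{H}(\psi)$, I obtain the crucial bound, linear in $\mathcal{H}$, namely $d\langle M\rangle_s\leq 2C_{\bf Q}\|\psi_0\|_{\mathbb{L}^2}^2\,\mathcal{H}(\psi(s))\,ds$. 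This linear growth is precisely what renders a Gr\"onwall argument available.

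For {\rm (i)}, It\^o's formula gives $\mathcal{H}^p(\psi(t))=\mathcal{H}^p(\psi_0)+p\int_0^t\mathcal{H}^{p-1}\,dM_s+p\int_0^t\mathcal{H}^{p-1}\,dA_s+\frac{p(p-1)}{2}\int_0^t\mathcal{H}^{p-2}\,d\langle M\rangle_s$. Taking expectations removes the martingale term; inserting the two estimates above, followed by Young's inequality $\mathcal{H}^{p-1}\|\psi_0\|_{\mathbb{L}^2}^2\leq\frac{p-1}{p}\mathcal{H}^p+\frac1p\|\psi_0\|_{\mathbb{L}^2}^{2p}$, leads to $E[\mathcal{H}^p(\psi(t))]\leq E[\mathcal{H}^p(\psi_0)]+Ct\,E[\|\psi_0\|_{\mathbb{L}^2}^{2p}]+C\int_0^t E[\mathcal{H}^p(\psi(s))]\,ds$. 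By the Poincar\'e inequality (valid since $\psi_0=0$ on $\partial{\mathcal O}$) one has $\|\psi_0\|_{\mathbb{L}^2}^{2p}\leq C\|\nabla\psi_0\|_{\mathbb{L}^2}^{2p}\leq C\,\mathcal{H}^p(\psi_0)\in L^1(\Omega)$, so the inhomogeneity is finite and Gr\"onwall's lemma yields {\rm (i)}. For non-integer $p\in(1,2)$ the map $x\mapsto x^p$ fails to be $C^2$ at the origin, but this is harmless: the second-order term obeys $\mathcal{H}^{p-2}\,d\langle M\rangle_s\leq C\,\mathcal{H}^{p-1}\,ds$, which is integrable, so one may apply It\^o's formula to $(\varepsilon+\mathcal{H})^p$ and let $\varepsilon\downarrow 0$.

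For {\rm (ii)} the only new point is the pathwise supremum of the stochastic integral, handled by the Burkholder--Davis--Gundy inequality $E\big[\sup_{t\leq T}\big|\int_0^t\mathcal{H}^{p-1}\,dM_s\big|\big]\leq C\,E\big[\big(\int_0^T\mathcal{H}^{2(p-1)}\,d\langle M\rangle_s\big)^{1/2}\big]$. Inserting $d\langle M\rangle\leq C\mathcal{H}\,ds$ produces the factor $\mathcal{H}^{2p-1}=\mathcal{H}^p\cdot\mathcal{H}^{p-1}$; bounding $\mathcal{H}^p$ by $\sup_{s\leq T}\mathcal{H}^p$ and applying Young's inequality $ab\leq\varepsilon a^2+C_\varepsilon b^2$ lets me absorb a term $\varepsilon\,E[\sup_{s\leq T}\mathcal{H}^p]$ into the left-hand side, while the remainder $E[\|\psi_0\|_{\mathbb{L}^2}^2\int_0^T\mathcal{H}^{p-1}\,ds]$ is controlled by part {\rm (i)}; the drift and quadratic-variation integrals in the supremum bound are treated exactly as in {\rm (i)}. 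Choosing $\varepsilon$ small enough completes the proof. The main obstacle is not any single estimate but the structural observation that both $A$ and $\langle M\rangle$ grow at most linearly in $\mathcal{H}$; this hinges on mass conservation to absorb the $\|\psi\|_{\mathbb{L}^2}^2$ factors and on the sign $\lambda=-1$, which guarantees $\mathcal{H}\geq\frac12\|\nabla\psi\|_{\mathbb{L}^2}^2$ and hence the required coercivity --- for $\lambda=1$ this closure would break down.
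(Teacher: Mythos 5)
Your proof is correct and follows the paper's overall skeleton: It\^o's formula applied to $\big(\mathcal{H}(\psi)\big)^p$ on top of the Hamiltonian balance \eqref{H}, expectation plus Gronwall for {\rm(i)}, and Burkholder--Davis--Gundy plus an absorption argument for {\rm(ii)}. Where you genuinely differ is in how the noise contributions are closed. The paper never invokes mass conservation or Poincar\'e: it bounds the drift by $\int_{\mathcal{O}}|\psi|^{2}\sum_{\ell}|\nabla {\bf Q}^{\frac12}e_{\ell}|^{2}dx\leq \frac14\|\psi\|_{\mathbb{L}^4}^{4}+\|\nabla {\bf Q}^{\frac12}\|_{\mathcal{L}_{2}({\mathbb U},\;\mathbb{L}^4)}^4$ and the quadratic variation by $\|\psi\|_{\mathbb{L}^{4}}^2\|\nabla {\bf Q}^{\frac12}\|_{\mathcal{L}_{2}({\mathbb U},\;\mathbb{L}^4)}^2\|\nabla\psi\|_{\mathbb{L}^{2}}^2\leq K\big(\mathcal{H}(\psi)\big)^2+K$, and then uses $\|\psi\|_{\mathbb{L}^4}^4\leq 4\mathcal{H}$, $\|\nabla\psi\|_{\mathbb{L}^2}^2\leq 2\mathcal{H}$ so that everything is a polynomial in $\mathcal{H}$ alone and Gronwall applies directly. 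You instead combine $\|\psi(s)\|_{\mathbb{L}^2}=\|\psi_0\|_{\mathbb{L}^2}$ with an $\mathbb{L}^\infty$ bound on $\nabla{\bf Q}^{\frac12}e_\ell$ to get $dA_s\leq K\|\psi_0\|_{\mathbb{L}^2}^2\,ds$ and $d\langle M\rangle_s\leq K\|\psi_0\|_{\mathbb{L}^2}^2\,\mathcal{H}\,ds$, paying for it with Poincar\'e to convert $\|\psi_0\|_{\mathbb{L}^2}^{2p}$ back into $\mathcal{H}^p(\psi_0)$. Your route buys cleaner (constant resp.\ linear in $\mathcal{H}$) growth bounds and, commendably, an explicit regularization treatment of non-integer $p\in(1,2)$ via $(\varepsilon+\mathcal{H})^p$, a point the paper passes over silently. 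The cost is dimensional: your bound $\|\nabla {\bf Q}^{\frac12}e_\ell\|_{\mathbb{L}^\infty}\leq K\|{\bf Q}^{\frac12}e_\ell\|_{\mathbb{H}^3}$ rests on $\mathbb{H}^{2}\hookrightarrow\mathbb{L}^{\infty}$, hence $d\leq 3$, whereas the paper's $\mathbb{L}^4$-based estimates only need $\mathbb{H}^{2}\hookrightarrow\mathbb{L}^{4}$ (valid up to $d\leq 8$); since the lemma is stated for a bounded Lipschitz domain in $\mathbb{R}^d$ without restriction on $d$, you should either state this limitation or, at that single point, replace the $\mathbb{L}^\infty$--$\mathbb{L}^2$ H\"older splitting by the paper's $\mathbb{L}^4$--$\mathbb{L}^4$--$\mathbb{L}^2$ splitting, after which your argument goes through on the same range as the paper's.
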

\begin{proof}
{\em Step 1: Case p=1.}
  Applying expectation to \eqref{H}, we have
  \begin{align*}
    E\Big(\mathcal{H}(\psi(t))\Big)&=E\Big(\mathcal{H}(\psi_0)\Big)+\dfrac12E\Big(\int_{0}^{t}\int_{\mathcal{O}}|\psi|^{2}\sum_{\ell}|\nabla {\bf Q}^{\frac12}e_{\ell}|^{2}dxds\Big).
  \end{align*}
  Since
  \begin{align}\label{w1}
  \int_{\mathcal{O}}|\psi|^{2}\sum_{\ell}|\nabla {\bf Q}^{\frac12}e_{\ell}|^{2}dx\leq
  \|\psi\|_{\mathbb{L}^4}^2\sum_{\ell}\|\nabla {\bf Q}^{\frac12}e_{\ell}\|_{\mathbb{L}^4}^{2}
  &\leq \frac14\|\psi\|_{\mathbb{L}^4}^{4}+\|\nabla {\bf Q}^{\frac12}\|_{\mathcal{L}_{2}({\mathbb U},\;\mathbb{L}^4)}^4
 \end{align}
 we get the following estimate for $E(\mathcal{H}(\psi(t)))$,
 \begin{align*}
   E\Big(\mathcal{H}(\psi(t))\Big)
   &\leq E\Big(\mathcal{H}(\psi_0)\Big)+Kt\|\nabla {\bf Q}^{\frac12}\|_{\mathcal{L}_{2}({\mathbb U},\;\mathbb{L}^4)}^4+E\int_{0}^{t}\|\psi\|_{\mathbb{L}^4}^4ds.
 \end{align*}
  From the definition of the Hamiltonian $\mathcal{H}(\psi)$, we know that $\|\psi\|_{\mathbb{L}^4}^4\leq 4\mathcal{H}(\psi)$, which leads to
  \begin{align*}
     \sup_{0\leq t\leq T}E\Big(\mathcal{H}(\psi(t))\Big)\leq  K+KE\int_{0}^{T}\mathcal{H}(\psi(s))ds.
  \end{align*}
  Gronwall's Lemma then implies the assertion ${\rm(i)}$ of the lemma.

  To show assertion ${\rm(ii)}$ for $p=1$, we take the supremum over $t\in[0,T]$ in \eqref{H} before taking the expectation. If compared to assertion ${\rm(i)}$, the main difference is the appearance of the supremum of a stochastic integral, whose expectation can be estimated by the Burkholder-Davis-Gundy inequality:
  \begin{align}\label{stochatic}
   & E\Big[\sup_{0\leq t\leq T}\Big(-\Im\int_{0}^{t}\int_{\mathcal{O}}\bar{\psi}\nabla \psi  d(\nabla W(s))dx\Big)\Big]\nonumber\\
   & \leq KE\Big[\Big(\int_{0}^{T}\|\psi\|_{\mathbb{L}^4}^2\|\nabla\psi\|_{\mathbb{L}^2}^2\|\nabla {\bf Q}^{\frac12}\|_{\mathcal{L}_{2}({\mathbb U},\;\mathbb{L}^4)}^2ds\Big)^{\frac12}\Big]\nonumber\\
   & \leq KE\Big[\sup_{0\leq t\leq T}\|\nabla\psi(t)\|_{\mathbb{L}^2}\Big(\int_{0}^{T}\|\psi\|_{\mathbb{L}^4}^2\|\nabla {\bf Q}^{\frac12}\|_{\mathcal{L}_{2}({\mathbb U},\;\mathbb{L}^4)}^2ds\Big)^{\frac12}\Big]\nonumber\\
   & \leq \frac18 E\Big[\sup_{0\leq t\leq T}\|\nabla\psi(t)\|_{\mathbb{L}^2}^2\Big]+KE\int_{0}^{T}\Big(\|\psi\|_{\mathbb{L}^4}^4+\|\nabla {\bf Q}^{\frac12}\|_{\mathcal{L}_{2}({\mathbb U},\;\mathbb{L}^4)}^4\Big)ds\nonumber\\
   & \leq 4E\Big[\sup_{0\leq t\leq T}\mathcal{H}(\psi(t))\Big]+KE\int_{0}^{T}\Big(\mathcal{H}(\psi(s))+\|\nabla {\bf Q}^{\frac12}\|_{\mathcal{L}_{2}({\mathbb U},\;\mathbb{L}^4)}^4\Big)ds,
  \end{align}
  where in the last line we use  $\|\nabla\psi\|_{\mathbb{L}^2}^2\leq 2\mathcal{H}(\psi)$ and $\|\psi\|_{\mathbb{L}^4}^4\leq 4\mathcal{H}(\psi)$.
Then proceeding as in the proof of assertion ${\rm(i)}$, we can absorb the first term on the left-hand side, and use Gronwall's lemma.

{\em Step 2: $p\geq2$.}
  We apply It\^o's formula to  $\Big(\mathcal{H}(\psi)\Big)^{p}$, where $\mathcal{H}(\psi(t))$ satisfies \eqref{H}.
  \begin{align}\label{5}
  \Big(\mathcal{H}(\psi(t))\Big)^{p}&=\Big(\mathcal{H}(\psi_0)\Big)^{p}+\dfrac12 \int_{0}^{t}p\Big(\mathcal{H}(\psi)\Big)^{p-1}\int_{\mathcal{O}}|\psi|^{2}\sum_{\ell}|\nabla {\bf Q}^{\frac12}e_{\ell}|^{2}dxds\nonumber\\
  &\quad+\dfrac12 \int_{0}^{t}p(p-1)\Big(\mathcal{H}(\psi)\Big)^{p-2}\sum_{\ell}\Big(\Im\int_{\mathcal{O}}\bar{\psi}\nabla\psi\nabla {\bf Q}^{\frac12}e_{\ell}dx\Big)^{2}ds\nonumber\\
  &\quad +\dfrac12 \int_{0}^{t}p\Big(\mathcal{H}(\psi)\Big)^{p-1}\int_{\mathcal{O}}\bar{\psi}\nabla\psi d(\nabla W(s))dx.
  \end{align}
  Since the last term on the right-hand side vanishes after applying expectation, there remains to estimate the term
  \begin{align}\label{w2}
  \sum_{\ell}\Big(\int_{\mathcal{O}}\bar{\psi}\nabla\psi\nabla {\bf Q}^{\frac12}e_{\ell}dx\Big)^2\leq \|\psi\|_{\mathbb{L}^{4}}^2\|\nabla {\bf Q}^{\frac12}\|_{\mathcal{L}_{2}({\mathbb U},\;\mathbb{L}^4)}^2\|\nabla\psi\|_{\mathbb{L}^{2}}^2\leq K\Big(\mathcal{H}(\psi)\Big)^2+\|\nabla {\bf Q}^{\frac12}\|_{\mathcal{L}_{2}({\mathbb U},\;\mathbb{L}^4)}^8.\end{align}
  Because of \eqref{w1}, \eqref{w2}, and H\"older's inequality, we have
\begin{align*}
  \sup_{0\leq t\leq T}E\Big(\mathcal{H}(\psi(t))\Big)^p\leq K+KE\int_{0}^{T}\Big(\mathcal{H}(\psi(s))\Big)^pds.
\end{align*}
  We may now apply Gronwall's lemma to obtain the estimate ${\rm(i)}$.

  The assertion ${\rm(ii)}$ for $p\geq 2$ now uses arguments similar to \eqref{stochatic}, so we skip the details here.
\end{proof}
{
\begin{remark}
  In \cite[Theorem 4.6]{DBD0a}, a uniform bound for the Hamiltonian is used to
  construct a global unique solution with continuous ${\mathbb H}^1({\mathbb R}^d)$-valued paths for equation \eqref{sdd1} with $\lambda=-1$ or $d=1$.
  To accomplish this result, the unique local mild solution is constructed by a contraction argument, which is then shown to be global by a bound for the Hamiltonian.
  We can follow the same strategy in \cite{DBD0a} to construct the global unique  mild solution with continuous ${\mathbb H}^1_0({\mathcal O})$-valued paths in the case of a bounded Lipschitz domain ${\mathcal O} \subset {\mathbb R}^1$. It is an open problem to prove existence and uniqueness of a continuous solution in the case of a bounded domain in higher
  dimensions.
\end{remark}
}

\begin{corollary}\label{spatial H1}
Let $p\geq 1$, $\mathcal{O}\subset\mathbb{R}^d$ be a bounded Lipschitz domain, $E\big(\mathcal{H}(\psi_0)\big)^{p}< \infty$  such that $\psi_0 = 0$ on $\partial {\mathscr O}$, and $\psi$ be a mild solution. There exists a constant
{ $K\equiv K\big(p,T\big)>0$} such that
\begin{align*}
  &{\rm(i)}\quad\sup_{0\leq t\leq T}\Big(E\|\nabla\psi(t)\|_{\mathbb{L}^{2}}^{2p}+E\|\psi(t)\|_{\mathbb{L}^{4}}^{4p}\Big)\leq K,\\
  &{\rm(ii)}\quad E\Big[\sup_{0\leq t\leq T}\Big(\|\nabla\psi(t)\|_{\mathbb{L}^{2}}^{2p}+\|\psi(t)\|_{\mathbb{L}^{4}}^{4p}\Big)\Big]\leq K.
\end{align*}
\end{corollary}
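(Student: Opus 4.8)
The plan is to read both bounds directly off Lemma \ref{lemma3}, exploiting that for $\lambda=-1$ the Hamiltonian is coercive. Recalling that with $\lambda=-1$ one has
\begin{equation*}
\mathcal{H}(\psi)=\tfrac12\|\nabla\psi\|_{\mathbb{L}^2}^2+\tfrac14\|\psi\|_{\mathbb{L}^4}^4,
\end{equation*}
both summands are nonnegative, so pointwise in $(\omega,t)$ there hold the elementary inequalities
\begin{equation*}
\|\nabla\psi\|_{\mathbb{L}^2}^2\leq 2\,\mathcal{H}(\psi),\qquad \|\psi\|_{\mathbb{L}^4}^4\leq 4\,\mathcal{H}(\psi).
\end{equation*}
These are precisely the two bounds already invoked in the proof of Lemma \ref{lemma3}. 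Note that the hypothesis $E(\mathcal{H}(\psi_0))^p<\infty$ is exactly what is required for Lemma \ref{lemma3} to apply, so no new integrability assumption is needed.

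Next I would raise each inequality to the power $p\geq 1$ and add, to obtain the single pathwise estimate
\begin{equation*}
\|\nabla\psi\|_{\mathbb{L}^2}^{2p}+\|\psi\|_{\mathbb{L}^4}^{4p}\leq \big(2^p+4^p\big)\big(\mathcal{H}(\psi)\big)^p\leq 2\cdot 4^p\,\big(\mathcal{H}(\psi)\big)^p.
\end{equation*}
For assertion (i) I would evaluate this at fixed $t$, take expectation, and then the supremum over $t\in[0,T]$, using assertion (i) of Lemma \ref{lemma3}:
\begin{equation*}
\sup_{0\leq t\leq T}\Big(E\|\nabla\psi(t)\|_{\mathbb{L}^2}^{2p}+E\|\psi(t)\|_{\mathbb{L}^4}^{4p}\Big)\leq 2\cdot 4^p\sup_{0\leq t\leq T}E\big(\mathcal{H}(\psi(t))\big)^p\leq K.
\end{equation*}
For assertion (ii) I would instead take the supremum over $t\in[0,T]$ first and then the expectation, now invoking assertion (ii) of Lemma \ref{lemma3}; this yields the claim with the same type of constant, up to the harmless factor $2\cdot 4^p$, which can be absorbed into the redefinition of $K\equiv K(p,T)$.

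I do not expect a genuine obstacle in this argument: the entire structural input is the sign $\lambda=-1$, which renders $\mathcal{H}$ a sum of two nonnegative norms and hence immediately coercive, so that each individual norm is controlled by $\mathcal{H}$ itself. By contrast, had $\lambda=+1$ been admitted, the $\mathbb{L}^4$ contribution would enter $\mathcal{H}$ with the opposite sign and one could no longer dominate the separate norms by $\mathcal{H}$ alone; the coercivity exploited here would fail. This is fully consistent with the fact that all of Section \ref{sec2} is carried out under the standing assumption $\lambda=-1$, so the corollary is essentially an immediate restatement of Lemma \ref{lemma3}.
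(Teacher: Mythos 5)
Your proof is correct and is exactly the argument the paper intends: the corollary is stated without proof as an immediate consequence of Lemma \ref{lemma3}, using the coercivity bounds $\|\nabla\psi\|_{\mathbb{L}^2}^2\leq 2\mathcal{H}(\psi)$ and $\|\psi\|_{\mathbb{L}^4}^4\leq 4\mathcal{H}(\psi)$ valid for $\lambda=-1$, which the paper itself invokes inside the proof of that lemma. Your ordering of expectation and supremum for parts {\rm(i)} and {\rm(ii)}, matched to the corresponding parts of Lemma \ref{lemma3}, is precisely what is needed.
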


In order to verify improved stability properties for the solution of \eqref{sdd1}, we have to restrict to bounded domains $\mathcal{O}\subset \mathbb{R}^1$; the technical reason for this restriction is discussed in Remark \ref{re1} below.
\begin{lemma}\label{spatial H2 new}
  Let $\mathcal{O}\subset \mathbb{R}^1$, and suppose that $\psi_0\in L^{2p}(\Omega;{\mathbb H}_{0}^{1}\cap{\mathbb{H}^{2}}(\mathcal{O}))$ for some $p\geq 1$. Then there exists a constant {$K=K (p,T)>0$} such that
  \begin{equation}\label{a1}
    \sup_{0\leq t\leq T}E\Big(\|\psi(t)\|_{\mathbb{H}^{2}}^{2p}\Big)\leq K.
  \end{equation}
\end{lemma}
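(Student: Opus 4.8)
The plan is to reduce the $\mathbb{H}^2$-estimate to a bound on $\|\Delta\psi\|_{\mathbb{L}^2}$ and then to run a Gronwall argument on the latter via It\^o's formula, exactly as the preceding lemmas do for lower norms. First I would use elliptic regularity for the Dirichlet Laplacian $A:=-\Delta$ on the one-dimensional domain: since $\psi\in {\mathbb H}_0^1\cap {\mathbb H}^2$ we have $\|\psi\|_{{\mathbb H}^2}\leq K\big(\|A\psi\|_{\mathbb{L}^2}+\|\psi\|_{\mathbb{L}^2}\big)$, so that \eqref{a1} follows once $\sup_{0\leq t\leq T}E\|A\psi(t)\|_{\mathbb{L}^2}^{2p}\leq K$ is established (the $\mathbb{L}^2$- and $\mathbb{H}^1$-moments being already controlled by mass conservation \eqref{L2} and Corollary \ref{spatial H1}). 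Writing \eqref{S_NLSE_Ito} in the form $d\psi=\big(-iA\psi-i|\psi|^2\psi-\tfrac12\psi F_{\bf Q}\big)\,dt-i\psi\,dW(t)$, I would (formally, to be justified by the truncation procedure already announced before Lemma \ref{lemma3}) apply It\^o's formula to the functional $X(t):=\|A\psi(t)\|_{\mathbb{L}^2}^2$. This produces a drift consisting of a linear part, a cubic part, a part from the Stratonovich correction $\psi F_{\bf Q}$, and the It\^o quadratic-variation term $\tfrac12\sum_\ell\|A(\psi\,{\bf Q}^{\frac12}e_\ell)\|_{\mathbb{L}^2}^2$, together with a mean-zero martingale.

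The linear term vanishes: its contribution is $2\,\Re\langle A\psi,A(-iA\psi)\rangle=2\,\Re\big(-i\langle A\psi,A^2\psi\rangle\big)=2\,\Re\big(-i\|A^{3/2}\psi\|_{\mathbb{L}^2}^2\big)=0$, the self-adjoint functional calculus for $A$ absorbing the boundary contributions that would otherwise arise from an integration-by-parts with $\Delta^2$. The delicate term is the cubic one, $\Im\langle A\psi,\Delta(|\psi|^2\psi)\rangle$. Expanding $\Delta(|\psi|^2\psi)=|\psi|^2\psi_{xx}+\psi^2\bar\psi_{xx}+(\text{terms in }\psi,\psi_x)$ in $d=1$, the potentially uncontrollable top-order term has real integrand, $\Im\int|\psi|^2|\psi_{xx}|^2\,dx=0$, and drops out; the remaining top-order piece is bounded by $\int|\psi|^2|\psi_{xx}|^2\,dx\leq\|\psi\|_{\mathbb{L}^\infty}^2\|A\psi\|_{\mathbb{L}^2}^2\leq K\|\psi\|_{{\mathbb H}^1}^2 X$, using the one-dimensional embedding ${\mathbb H}^1\hookrightarrow{\mathbb L}^\infty$. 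The genuinely lower-order contributions, e.g. $\langle\psi_{xx},\bar\psi\psi_x^2\rangle$, are handled by the Gagliardo--Nirenberg inequality $\|\psi_x\|_{\mathbb{L}^4}\leq K\|\psi_x\|_{\mathbb{L}^2}^{3/4}\|\psi\|_{{\mathbb H}^2}^{1/4}$ followed by Young's inequality, yielding $\varepsilon X+K(1+\|\psi\|_{{\mathbb H}^1}^{q})$ for a suitable power $q$; the $\varepsilon X$ is absorbed on the left. This is precisely where $d=1$ is used, and I would point to Remark \ref{re1} for the failure of these embedding/interpolation steps in higher dimensions.

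For the noise contributions I would invoke the standing assumption ${\bf Q}^{\frac12}\in\mathcal{L}_2({\mathbb U},{\mathbb H}^3(\mathcal{O}))$ together with ${\mathbb H}^3\hookrightarrow W^{2,\infty}$ in one dimension: the Stratonovich drift gives $\tfrac12\Re\langle A\psi,\Delta(\psi F_{\bf Q})\rangle$ whose top-order part $\int F_{\bf Q}|\psi_{xx}|^2\,dx\leq\|F_{\bf Q}\|_{\mathbb{L}^\infty}X$ is controlled, and the quadratic-variation term satisfies $\tfrac12\sum_\ell\|\Delta(\psi\,{\bf Q}^{\frac12}e_\ell)\|_{\mathbb{L}^2}^2\leq K\|{\bf Q}^{\frac12}\|_{\mathcal{L}_2({\mathbb U},{\mathbb H}^3)}^2\,\|\psi\|_{{\mathbb H}^2}^2$ after distributing $\Delta$ and summing in $\ell$. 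Collecting everything gives, after applying It\^o's formula to $X(t)^p$, multiplying the drift by $pX^{p-1}$ and taking expectation (the martingale then vanishes, which is what makes the weaker statement $\sup_tE[\cdot]$, rather than $E[\sup_t\cdot]$, convenient here), an inequality of the form $\tfrac{d}{dt}E[X^p]\leq K\,E[X^p]+K\,E\big[(1+\|\psi\|_{{\mathbb H}^1}^{qp})\big]$, where the last expectation is finite by Corollary \ref{spatial H1}. Gronwall's lemma then yields $\sup_{0\leq t\leq T}E[X(t)^p]\leq K$, and Step~1 completes the proof of \eqref{a1}. The main obstacle is the cubic term in Step~2: without the cancellation $\Im\int|\psi|^2|\psi_{xx}|^2\,dx=0$ and the one-dimensional Gagliardo--Nirenberg interpolation one cannot keep the top-order contribution at a level that the Gronwall argument can absorb, and it is exactly this point that forces the restriction $\mathcal{O}\subset\mathbb{R}^1$.
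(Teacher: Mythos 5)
Your strategy breaks down at the single point where the whole difficulty of this lemma is concentrated: the un-cancelled top-order cubic term. You correctly observe that in the expansion of $\Im\langle A\psi,\Delta(|\psi|^2\psi)\rangle$ the piece $\Im\int_{\mathcal O}|\psi|^2|\psi_{xx}|^2\,dx$ vanishes, but the other top-order piece, $\Im\int_{\mathcal O}\psi^2(\bar\psi_{xx})^2\,dx$, does not, and you bound it by $K\|\psi\|_{{\mathbb H}^1}^2X$ with $X=\|A\psi\|_{{\mathbb L}^2}^2$. This coefficient is a \emph{random, unbounded} quantity, and the subsequent claim that one obtains $\tfrac{d}{dt}E[X^p]\leq K\,E[X^p]+K\,E[1+\|\psi\|_{{\mathbb H}^1}^{qp}]$ is a non sequitur: what actually appears is $E\big[\|\psi\|_{{\mathbb H}^1}^2X^p\big]$, and there is no way to dominate this by the right-hand side you wrote. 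H\"older's inequality brings in a strictly higher moment $E[X^{ps}]$, $s>1$, which is exactly what you are trying to bound (circular); Young's inequality produces $E[X^{p+\delta}]$, again a higher moment; and a pathwise Gronwall argument would require exponential moments of $\int_0^T\|\psi\|_{{\mathbb H}^1}^2\,ds$, whereas Corollary \ref{spatial H1} provides only polynomial moments. So the Gronwall step, as stated, fails, and this is not a repairable technicality within your setup: the term $\Im\int\psi^2(\bar\psi_{xx})^2\,dx$ cannot be integrated by parts into lower-order form.

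This is precisely why the paper does \emph{not} apply It\^o's formula to $\|\Delta\psi\|_{{\mathbb L}^2}^2$ alone, but to the modified functional $f(\psi)=\int_{\mathcal O}|(Id-\Delta)\psi|^2dx+\Re\int_{\mathcal O}\big((Id-\Delta)\bar\psi\big)|\psi|^2\psi\,dx$ in \eqref{fpsi}. The cubic correction is engineered so that its It\^o expansion produces (via the term $I_a^2$) exactly $+\Im\int\psi^2(\Delta\bar\psi)^2\,dx$, which cancels the problematic contribution coming from the quadratic part (the term $\frac12 I_a^1$); after this cancellation every remaining term admits a bound of the form $K\|\Delta\psi\|_{{\mathbb L}^2}^2+K\cdot(\text{polynomial in }\|\psi\|_{{\mathbb H}^1})$ with a \emph{deterministic} constant $K$, which is the structure a moment Gronwall argument can absorb (the one-dimensional embeddings and the interpolation of ${\mathbb L}^4$ enter only at this lower-order stage, cf.\ \eqref{a4} and Remark \ref{re1}, consistent with your treatment of those terms). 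The correction term itself is then absorbed at the end using ${\mathbb H}^1\hookrightarrow{\mathbb L}^6$ and Corollary \ref{spatial H1}. Your handling of the linear, Stratonovich-correction and quadratic-variation terms is fine, but without the modified-energy cancellation the proof does not close.
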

\begin{proof}
To simplify notations, we present the proof of \eqref{a1} for the case $p=1$.
  We formally apply It\^o's formula to the function $f(\psi(\cdot))$, where  \[f(\psi)=\int_{\mathcal{O}}|(Id-\Delta)\psi|^{2}dx+\Re\int_{\mathcal{O}}\big((Id-\Delta)\bar{\psi}\big)|\psi|^{2}\psi dx,\]
  since for the leading term we have $\|\psi\|_{{\mathbb H}^2}^2\leq \|(Id-\Delta)\psi\|_{{\mathbb L}^2}^2\leq 2\|\psi\|_{{\mathbb H}^2}^2$, i.e., its square-root is equivalent to the norm ${\mathbb H}^1_0 \cap {\mathbb H}^2$.
  We use \eqref{S_NLSE_Ito} to get
  \begin{align}\label{fpsi}
    f(\psi(t))=&f(\psi_0)+\int_{0}^{t}Df(\psi)\Big(i\Delta \psi-i|\psi|^2\psi-\frac12\psi F_{{\bf Q}}\Big)ds+\frac12\int_{0}^{t}\text{Tr}\Big[D^2f(\psi)(-i\psi {\bf Q}^{\frac12})(-i\psi {\bf Q}^{\frac12})^{*}\Big]ds\nonumber\\
    &+\int_{0}^{t}Df(\psi)(-i\psi dW(s))\nonumber\\
    =:&f(\psi_0)+I+II+III,
  \end{align}
  with the first and second order derivatives
  \begin{align*}
    Df(\psi)(u)=&2\Re\int_{\mathcal{O}}\big((Id-\Delta)\bar{\psi}\big)\big((Id-\Delta)u\big)dx
    +\Re\int_{\mathcal{O}}\big((Id-\Delta)\bar{\psi}\big)
    \psi(\bar{\psi}u+\psi\bar{u})dx\\
    &+\Re\int_{\mathcal{O}}\big((Id-\Delta)\bar{\psi}\big)|\psi|^{2}udx
    +\Re\int_{\mathcal{O}}\big((Id-\Delta)(|\psi|^{2}\psi)\big)\bar{u}dx \quad \forall u\in {\mathbb C}_{0}^{\infty}(\mathcal{O}),
  \end{align*}
  and
  \begin{align*}
    D^{2}f(\psi)(u,v)=&2\Re\int_{\mathcal{O}}\big((Id-\Delta)\bar{u}\big)\big((Id-\Delta)v\big)dx
    +2\Re\int_{\mathcal{O}}\big((Id-\Delta)\bar{\psi}\big)\psi\Re (\bar{u}v)dx\\
    &+2\Re\int_{\mathcal{O}}\big((Id-\Delta)\bar{\psi}\big)u\Re(\bar{\psi}v)dx
    +2\Re\int_{\mathcal{O}}\big((Id-\Delta)\bar{u}\big)\psi\Re(\bar{\psi}v)dx\\
    &+2\Re\int_{\mathcal{O}}\big((Id-\Delta)\bar{\psi}\big)\Re(\bar{\psi}u)vdx
    +\Re\int_{\mathcal{O}}\big((Id-\Delta)\bar{u}\big)|\psi|^{2}vdx\\
    &+\Re\int_{\mathcal{O}}\big((Id-\Delta)\bar{v}\big)|\psi|^{2}udx
    +2\Re\int_{\mathcal{O}}\big((Id-\Delta)\bar{v}\big)\Re(\bar{\psi}u)\psi dx\quad \forall u,v \in {\mathbb C}_{0}^{\infty}(\mathcal{O}).
  \end{align*}
   For the term $f(\psi_0)$, we use the continuous embedding $\mathbb{H}^1\hookrightarrow \mathbb{L}^6$,
\[ E(f(\psi_0))\leq 2E\|\psi_0\|_{\mathbb{H}^2}^2+KE\big(\|\psi_0\|_{\mathbb{H}^2}\|\psi_0\|_{\mathbb{L}^6}^3\big)\leq KE \|\psi_0\|_{\mathbb{H}^2}^2 +KE\|\psi_0\|_{\mathbb{H}^1}^6\leq K.\]
 The term $I$ is the most difficult one: by the expression for $Df(\psi)$ above, we may represent it in the following form.
  \begin{align*}
    I=&2\int_{0}^{t}\Re\int_{\mathcal{O}}\big((Id-\Delta)\bar{\psi}\big) \Big((Id-\Delta)(i\Delta\psi-i|\psi|^{2}\psi-\frac12\psi F_{{\bf Q}})\Big)dxds\nonumber\\
    &+\int_{0}^{t}\Re\int_{\mathcal{O}}\big((Id-\Delta)\bar{\psi}\big)\psi\Big[\bar{\psi}(i\Delta\psi-i|\psi|^{2}\psi-\frac12 \psi F_{{\bf Q}})
    +\psi(-i\Delta\bar{\psi}+i|\psi|^{2}\bar{\psi}-\frac12 \bar{\psi}F_{{\bf Q}})\Big]dxds\nonumber\\
    &+\int_{0}^{t}\Re\int_{\mathcal{O}}\big((Id-\Delta)\bar{\psi}\big)|\psi|^{2}(i\Delta\psi-i|\psi|^{2}\psi-\frac12 \psi F_{{\bf Q}})dxds\nonumber\\
    &+\int_{0}^{t}\Re\int_{\mathcal{O}}\big((Id-\Delta)(|\psi|^{2}\psi)\big)(-i\Delta\bar{\psi}+i|\psi|^{2}\bar{\psi}-\frac12 \bar{\psi}F_{{\bf Q}})dxds\\
    =:&I^1+I^2+I^3+I^4.
  \end{align*}
We treat terms $I^{1}$, $I^{2}$ and $I^{4}$ together, for they have troublesome terms which cancel each other.
For this purpose, we first consider terms $I^{1}$, $I^{4}$ and $I^{2}$ independently. For the first term in $I$, we compute
\begin{align*}
 I^{1}=&-2\int_{0}^{t}\Re\int_{\mathcal{O}}i\big((Id-\Delta)\bar{\psi}\big)\big((Id-\Delta)(|\psi|^{2}\psi)\big)dxds\\
 & -\int_{0}^{t}\Re\int_{\mathcal{O}}\big((Id-\Delta)\bar{\psi}\big)\big((Id-\Delta)(\psi F_{{\bf Q}})\big)dxds\\
  =:&I_{a}^{1}+I_{b}^{1}.
\end{align*}
We conclude that
\[E(I_{b}^{1})\leq E\int_{0}^{t}\|\psi\|_{\mathbb{H}^{2}}^{2}\|F_{{\bf Q}}\|_{\mathbb{H}^{2}}ds\leq K E\int_{0}^{t}\|\psi\|_{\mathbb{H}^{2}}^{2}ds.\]
By $\Re\int_{\mathcal{O}}i\big((Id-\Delta)|\psi|^2\psi\big)|\psi|^2\psi dx=0$, we can rewrite the term $I^4$ in the following two parts,
\begin{align*}
  I^{4}=&\int_{0}^{t}\Re\int_{\mathcal{O}}(-i)\big((Id-\Delta)(|\psi|^{2}\psi)\big)\Delta\bar{\psi}dxds
  +\int_{0}^{t}\Re\int_{\mathcal{O}}\big((Id-\Delta)(|\psi|^{2}\psi)\big)(i|\psi|^{2}\bar{\psi}-\frac12 \bar{\psi}F_{{\bf Q}})dxds\\
  =&\int_{0}^{t}\Re\int_{\mathcal{O}}(-i)\big((Id-\Delta)(|\psi|^{2}\psi)\big)\Delta\bar{\psi}dxds
  -\frac12\int_{0}^{t}\Re\int_{\mathcal{O}}\big((Id-\Delta)(|\psi|^{2}\psi)\big)(\bar{\psi}F_{{\bf Q}})dxds\\
  =:&I_{a}^{4}+I_{b}^{4}.
\end{align*}
Summing the terms $\frac12 I_{a}^{1}$ and $I_{a}^{4}$ leads to
\begin{align*}
 & -\int_{0}^{t}\Re\int_{\mathcal{O}}i\big((Id-\Delta)\bar{\psi}\big)\big((Id-\Delta)(|\psi|^{2}\psi)\big)dxds
-\int_{0}^{t}\Re\int_{\mathcal{O}}i\big((Id-\Delta)(|\psi|^{2}\psi)\big)\Delta\bar{\psi}dxds\\
&=-\int_{0}^{t}\Re\int_{\mathcal{O}}i\bar{\psi}\big((Id-\Delta)(|\psi|^{2}\psi)\big)dxds\\
&=:I_{a}^{14}.
\end{align*}
This term and the term $I_{b}^{4}$ can be bounded by integration by parts, using the embedding $\mathbb{H}^1\hookrightarrow \mathbb{L}^6$, and Corollary \ref{spatial H1}, that is
\begin{align*}
E(I_{a}^{14}+I_{b}^{4})
&=-\frac12\int_{0}^{t}\Re\int_{\mathcal{O}}(|\psi|^{2}\psi)\big((Id-\Delta)(\bar{\psi}F_{{\bf Q}})\big)dxds
-\int_{0}^{t}\Re\int_{\mathcal{O}}i\big((Id-\Delta)\bar{\psi}\big)(|\psi|^{2}\psi)dxds\\
&\leq KE\int_{0}^{t}\|\psi F_{{\bf Q}}\|_{\mathbb{H}^2}\|\psi\|_{\mathbb{L}^6}^3ds+KE\int_{0}^{t}\|\psi\|_{\mathbb{H}^2}\|\psi\|_{\mathbb{L}^6}^3ds\\
&\leq KE\int_{0}^{t}(\|\psi\|_{\mathbb{H}^1}^{6}+\|\psi\|_{\mathbb{H}^2}^{2})ds\\
&\leq K+KE\int_{0}^{t}\|\psi\|_{\mathbb{H}^2}^{2}ds.
\end{align*}
Next, we consider the term $I^2$ and use the identity $a\bar b+\bar{a} b=2\Re(a\bar b)$ for $a,b\in\mathbb{C}$ to rewrite its part
\begin{align*}
 & \bar{\psi}(i\Delta\psi-i|\psi|^{2}\psi-\frac12 \psi F_{{\bf Q}})
    +\psi(-i\Delta\bar{\psi}+i|\psi|^{2}\bar{\psi}-\frac12 \bar{\psi}F_{{\bf Q}})\\
   & =i\bar{\psi}(\Delta \psi)-i\psi(\Delta\bar{\psi})+2\Re\big(\bar{\psi}(-i|\psi|^{2}\psi-\frac12 \psi F_{{\bf Q}}))\big)\\
   & =i\bar{\psi}(\Delta \psi)-i\psi(\Delta\bar{\psi})-|\psi|^2F_{{\bf Q}}
\end{align*}
Then the term $I^{2}$ equals to
\begin{align*}
  I^{2}=&
  -\int_{0}^{t}\Re\int_{\mathcal{O}}\big((Id-\Delta)\bar{\psi}\big)\psi|\psi|^2F_{{\bf Q}}dxds
  +\int_{0}^{t}\Re\int_{\mathcal{O}}i\big((Id-\Delta)\bar{\psi}\big)|\psi|^{2}\Delta \psi dxds\\
& +\int_{0}^{t}\Re\int_{\mathcal{O}}(-i)|\psi|^{2}\psi\Delta\bar{\psi}dxds
  -\int_{0}^{t}\Re\int_{\mathcal{O}}(-i)(\Delta\bar{\psi}\psi)^{2}dxds\\
  =:&I_{b}^{2}+I_{a}^{2},
\end{align*}
where $I_{a}^{2}=-\int_{0}^{t}\Re\int_{\mathcal{O}}(-i)(\Delta\bar{\psi}\psi)^{2}dxds$, while $I_{b}^{2}$ denotes the remainder terms in $I^2$.

We rewrite the term $\frac12 I_a^1$ in the form
\begin{align*}
 \frac12 I_a^1=&-2\int_{0}^{t}\Re\int_{\mathcal{O}}i\nabla\bar{\psi}\nabla(|\psi|^{2}\psi)dxds-\int_{0}^{t}\Re\int_{\mathcal{O}}i\Delta\bar{\psi}
  \Delta\big(|\psi|^2\psi\big)dx.
\end{align*}
We insert the identity $\Delta(|a|^2a)=2\Delta a|a|^2+4|\nabla a|^2a+2(\nabla a)^2\bar{a}+(a)^2\Delta\bar{a}$, for a complex-valued function $a(x)\in\mathbb{C}$ into the second integral in the above equation, add the terms $\frac12 I_{a}^{1}$ and $I_{a}^{2}$ to get
\begin{align*}
  \frac12 I_a^1+I_a^2
  =-2\int_{0}^{t}\Re\int_{\mathcal{O}}i\nabla\bar{\psi}\nabla(|\psi|^{2}\psi)dxds
  -2\int_{0}^{t}\Re\int_{\mathcal{O}}i\Big(\bar{\psi}\Delta\bar{\psi}(\nabla\psi)^{2}+2\psi\Delta\bar{\psi}|\nabla \psi|^{2}\Big)dxds.
\end{align*}
To estimate this term, we use integration by parts, H\"older inequality, the embedding $\mathbb{H}^1\hookrightarrow \mathbb{L}^{\infty}$ for $\mathcal{O}\subset\mathbb{R}^1$
and interpolation of $\mathbb{L}^4$ between $\mathbb{L}^2$ and $\mathbb{H}^1$,
\begin{align}
E(\frac12  I_a^1+I_a^2)&=-2 E\int_{0}^{t}\Re\int_{\mathcal{O}}i\nabla\bar{\psi}\nabla(|\psi|^{2}\psi)dxds
-2\int_{0}^{t}\Re\int_{\mathcal{O}}i\Big(\bar{\psi}(\nabla\psi)^{2}\Delta\bar{\psi}+2\psi|\nabla \psi|^{2}\Delta\bar{\psi}\Big)dxds\label{a4}\\
&\leq KE\int_{0}^{t}||\psi||_{\mathbb{H}^{1}}^{4}ds+KE\int_{0}^{t}\|\psi\|_{\mathbb{L}^{\infty}}^{2}\|\nabla\psi\|_{\mathbb{L}^4}^{4}ds+KE\int_{0}^{t}\|\Delta\psi\|_{\mathbb{L}^2}^{2}ds\nonumber\\
&\leq KE\int_{0}^{t}||\psi||_{\mathbb{H}^{1}}^{4}ds+KE\int_{0}^{t}\|\nabla\psi\|_{\mathbb{L}^2}^{10}ds+KE\int_{0}^{t}\|\Delta\psi\|_{\mathbb{L}^2}^{2}ds\nonumber\\
&\leq K+KE\int_{0}^{t}\|\Delta\psi\|_{\mathbb{L}^2}^{2}ds,\nonumber
\end{align}
where for the last inequality we use Corollary \ref{spatial H1} and equation \eqref{L2}.
Here, to estimate the second integral in \eqref{a4}, we have to restrict to $\mathcal{O}\subset\mathbb{R}^1$.

After using $\Re(i|\Delta\psi|^2|\psi|^2)=0$, the estimate of term $I_{b}^{2}$ is similar as before, and we have
\begin{equation}
  E(I^{2}_{b})\leq K+\int_{0}^{t}\|\psi\|_{\mathbb{H}^{2}}^{2}ds.
\end{equation}
Because of $\Re(i|\Delta \psi|^2|\psi|^2)=0$, the term $I^3$ can be estimated in a similar way by using H\"older's inequality and some embedding inequalities. It can be bounded by $K+KE\int_{0}^{t}\|\psi\|_{\mathbb{H}^{2}}^{2}ds$.

By the expression for $D^2 f(\psi)$ and since $\Re(\bar\psi(-i\psi {\bf Q}^{\frac12}))=0$, we have for term $II$,
\begin{align*}
  II=& \int_{0}^{t}\Re\int_{\mathcal{O}}\text{Tr}\Big[\big((Id-\Delta)(\overline{-i\psi {\bf Q}^{\frac12}})\big)\big((Id-\Delta)(-i\psi {\bf Q}^{\frac12})\big)\Big]dxds\\
    &+\int_{0}^{t}\Re\int_{\mathcal{O}}\text{Tr}\Big[\big((Id-\Delta)\bar{\psi}\big)\psi\Re\big((\overline{-i\psi {\bf Q}^{\frac12}})(-i\psi {\bf Q}^{\frac12})\big)\Big]dxds\\
    &+\int_{0}^{t}\Re\int_{\mathcal{O}}\text{Tr}\Big[\big((Id-\Delta)(\overline{-i\psi {\bf Q}^{\frac12}})\big)|\psi|^{2}(-i\psi {\bf Q}^{\frac12})\Big]dxds\\
    &+\int_{0}^{t}\Re\int_{\mathcal{O}}\text{Tr}\Big[\big((Id-\Delta)(\overline{-i\psi {\bf Q}^{\frac12}})\big)\psi\Re\big(\bar{\psi}(-i\psi {\bf Q}^{\frac12})\big)\Big]dxds.
\end{align*}
The estimate of term $II$ is similar to that of term $I^3$, using H\"older's inequality and embedding estimates.

 Because of the property of the It\^o stochastic integral, we know that the expectation of term $III$ equals to 0.

 Combining these together, we have
 \begin{align*}
   \sup_{0\leq t\leq T}E\|\psi(t)\|_{\mathbb{H}^{2}}^{2}&\leq |E(f(\psi(t)))|+\Big|E\Re\int_{\mathcal{O}}\big((Id-\Delta)\bar{\psi}(t)\big)|\psi(t)|^{2}\psi(t) dx\Big|\\
  & \leq \frac12\sup_{0\leq t\leq T}E\|\psi(t)\|_{\mathbb{H}^{2}}^{2}+K+ K\int_{0}^{T}E\|\psi(s)\|_{\mathbb{H}^2}^{2}ds,
 \end{align*}
 where in the last step, we use continuous embedding $\mathbb{H}^1\hookrightarrow \mathbb{L}^6$ and Corollary \ref{spatial H1}.
 Then the conclusion follows from Gronwall's lemma.
\end{proof}

\begin{remark}\label{re1}
There is only one term that requires a `1D-argument', which is the second term in \eqref{a4},
\[-2\int_{0}^{t}\Re\int_{\mathcal{O}}i\Big(\bar{\psi}(\nabla\psi)^{2}\Delta\bar{\psi}+2\psi|\nabla \psi|^{2}\Delta\bar{\psi}\Big)dxds
=-8\int_{0}^{t}\Re\int_{\mathcal{O}}i\psi|\nabla \psi|^{2}\Delta\bar{\psi}dx ds.\]
\end{remark}

\begin{lemma}\label{spatial E max}
  Let $\mathcal{O}\subset \mathbb{R}^1$, and suppose that $\psi_0\in L^{2p}(\Omega,{\mathbb H}_0^1\cap{\mathbb{H}^{2}(\mathcal{O})})$. Then there exists a constant {$K\equiv K(p,T)>0$} such that
  \begin{equation}\label{a2}
    E\Big(\sup_{0\leq t\leq T}\|\psi(t)\|_{\mathbb{H}^{2}}^{2p}\Big)\leq K.
  \end{equation}
\end{lemma}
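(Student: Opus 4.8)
The plan is to re-run the proof of Lemma \ref{spatial H2 new}, but to take the supremum over $t\in[0,T]$ \emph{before} applying the expectation; the only essentially new ingredient, compared with the bound $\sup_t E\|\psi(t)\|_{\mathbb H^2}^{2p}\le K$ already available from Lemma \ref{spatial H2 new}, is the treatment of the stochastic integral, which is handled by the Burkholder--Davis--Gundy inequality exactly in the spirit of \eqref{stochatic}. I present the argument for $p=1$ first. Applying It\^o's formula to $f(\psi(\cdot))$ gives the decomposition \eqref{fpsi}, $f(\psi(t))=f(\psi_0)+I+II+III$. Taking $\sup_{0\le t\le T}$, the drift contributions $f(\psi_0)$, $I$ and $II$ are treated pathwise: each is dominated by a time integral $\int_0^T(\cdots)\,ds$ whose integrand was already estimated in Lemma \ref{spatial H2 new} by quantities of the form $K\big(1+\|\psi\|_{\mathbb H^2}^{2}\big)$, with the constants controlled through Corollary \ref{spatial H1} and \eqref{L2}. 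Hence, after expectation and another appeal to Lemma \ref{spatial H2 new},
\begin{align*}
E\Big[\sup_{0\le t\le T}|f(\psi_0)+I+II|\Big]\le K+KE\int_0^T\|\psi(s)\|_{\mathbb H^2}^2\,ds\le K.
\end{align*}

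The genuinely new term is the martingale $III=\int_0^t Df(\psi)(-i\psi\,dW(s))$. Writing $W=\sum_\ell \mathbf Q^{\frac12}e_\ell\,\beta_\ell$ with independent real Brownian motions $\beta_\ell$, the Burkholder--Davis--Gundy inequality gives
\begin{align*}
E\Big[\sup_{0\le t\le T}|III|\Big]\le K\,E\Big[\Big(\int_0^T g(s)\,ds\Big)^{\frac12}\Big],\qquad g(s):=\sum_\ell\big|Df(\psi)(-i\psi\mathbf Q^{\frac12}e_\ell)\big|^2.
\end{align*}
Inserting $u=-i\psi\mathbf Q^{\frac12}e_\ell$ into the expression for $Df(\psi)(u)$, the leading contribution is $2\Re\int_{\mathcal O}\big((Id-\Delta)\bar\psi\big)\big((Id-\Delta)(-i\psi\mathbf Q^{\frac12}e_\ell)\big)dx$, bounded by $K\|\psi\|_{\mathbb H^2}\|\psi\mathbf Q^{\frac12}e_\ell\|_{\mathbb H^2}$; using that $\mathbb H^2(\mathcal O)$ is a Banach algebra for $\mathcal O\subset\mathbb R^1$, one has $\|\psi\mathbf Q^{\frac12}e_\ell\|_{\mathbb H^2}\le K\|\psi\|_{\mathbb H^2}\|\mathbf Q^{\frac12}e_\ell\|_{\mathbb H^2}$, and summing over $\ell$ produces the factor $\|\mathbf Q^{\frac12}\|_{\mathcal L_2(\mathbb U,\mathbb H^2)}^2\le\|\mathbf Q^{\frac12}\|_{\mathcal L_2(\mathbb U,\mathbb H^3)}^2<\infty$. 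Estimating the remaining, lower-order terms by the one-dimensional embeddings $\mathbb H^1\hookrightarrow\mathbb L^\infty$ and $\mathbb H^1\hookrightarrow\mathbb L^6$ as in Lemma \ref{spatial H2 new}, I obtain the pointwise bound
\begin{align*}
g(s)\le K\|\psi\|_{\mathbb H^2}^2\Big(\|\psi\|_{\mathbb H^2}^2+\|\psi\|_{\mathbb L^6}^6+1\Big).
\end{align*}
Pulling $\sup_{0\le t\le T}\|\psi(t)\|_{\mathbb H^2}$ out of the square root and applying Young's inequality then yields
\begin{align*}
E\Big[\sup_{0\le t\le T}|III|\Big]\le \varepsilon\,E\Big[\sup_{0\le t\le T}\|\psi(t)\|_{\mathbb H^2}^2\Big]+K_\varepsilon\,E\int_0^T\Big(\|\psi\|_{\mathbb H^2}^2+\|\psi\|_{\mathbb L^6}^6+1\Big)ds,
\end{align*}
where the last expectation is finite by Lemma \ref{spatial H2 new} and Corollary \ref{spatial H1}. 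Recalling $\|\psi\|_{\mathbb H^2}^2\le f(\psi)+\big|\Re\int_{\mathcal O}(Id-\Delta)\bar\psi\,|\psi|^2\psi\,dx\big|$ and bounding the correction by $\tfrac12\|\psi\|_{\mathbb H^2}^2+K\|\psi\|_{\mathbb L^6}^6$ (again Corollary \ref{spatial H1}), one collects all contributions into $E[\sup_t\|\psi\|_{\mathbb H^2}^2]\le K+2\varepsilon E[\sup_t\|\psi\|_{\mathbb H^2}^2]$; choosing $\varepsilon$ small and absorbing completes the case $p=1$. (To justify the absorption one first works with the stopping time $\sigma_R=\inf\{t:\|\psi(t)\|_{\mathbb H^2}>R\}$, which renders the left-hand side finite, and lets $R\to\infty$ by monotone convergence, the uniform-in-$R$ bound being supplied by Lemma \ref{spatial H2 new}.)

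For $p\ge2$ I would apply It\^o's formula to $(f(\psi))^p$, following Step~2 of the proof of Lemma \ref{lemma3}: the extra It\^o correction stemming from the second derivative is of lower order and is controlled exactly as term $II$, while the new martingale $\int_0^t p\,(f(\psi))^{p-1}Df(\psi)(-i\psi\,dW)$ is again estimated by Burkholder--Davis--Gundy, now carrying the factor $(f(\psi))^{p-1}\|\psi\|_{\mathbb H^2}\approx\|\psi\|_{\mathbb H^2}^{2p-1}$ inside the square root; Young's inequality with exponents $\tfrac{2p}{2p-1}$ and $2p$ isolates $\varepsilon E[\sup_t\|\psi\|_{\mathbb H^2}^{2p}]$ for absorption and leaves terms bounded by Lemma \ref{spatial H2 new} and Corollary \ref{spatial H1}. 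I expect the main obstacle to be the quadratic-variation estimate, i.e.\ controlling $g(s)$ by $\|\psi\|_{\mathbb H^2}^2$ times norms that are already known to be integrable in time: this is precisely the point where the one-dimensional embedding $\mathbb H^1\hookrightarrow\mathbb L^\infty$ (and the algebra property of $\mathbb H^2$) together with $\mathbf Q^{\frac12}\in\mathcal L_2(\mathbb U,\mathbb H^3)$ are indispensable, and where one must make sure that no power of $\|\psi\|_{\mathbb H^2}$ beyond that which the Young step can absorb survives the summation over $\ell$.
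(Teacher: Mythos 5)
Your proposal is correct and follows essentially the same route as the paper's proof: both reuse the It\^o decomposition \eqref{fpsi} from Lemma \ref{spatial H2 new}, identify the supremum of the stochastic integral $III$ as the only genuinely new term, bound its quadratic variation by $\|\psi\|_{\mathbb H^2}^4\|{\bf Q}^{\frac12}\|_{\mathcal L_2({\mathbb U},\mathbb H^2)}^2$ (plus lower-order pieces), and then apply Burkholder--Davis--Gundy, pull out $\sup_t\|\psi(t)\|_{\mathbb H^2}$, and absorb it via Young's inequality before invoking Gronwall and the bounds of Lemma \ref{spatial H2 new}. The only (immaterial) difference is the treatment of $p\geq 2$: the paper applies Burkholder--Davis--Gundy with exponent $p$ directly to the martingale coming from the $p=1$ It\^o identity, whereas you apply It\^o's formula to $(f(\psi))^p$ in the spirit of Step 2 of Lemma \ref{lemma3} and then use Burkholder--Davis--Gundy with exponent $1$; both variants are standard and lead to the same absorption argument.
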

\begin{proof}
 If compared to Lemma \ref{spatial H2 new}, the main difference of proof is the appearance of the supremum of stochastic integrals $III$ in \eqref{fpsi}, whose expectations do not vanish anymore. By the expression of $Df(\psi)$, we know
  \begin{align}\label{a13}
  III=&2\int_{0}^{t}\Re\int_{\mathcal{O}}\big((Id-\Delta)\bar\psi\big)(Id-\Delta)(-i\psi dW(s))dx\nonumber\\
  &+\int_{0}^{t}\Re\int_{\mathcal{O}}\big((Id-\Delta)\bar\psi\big)|\psi|^2(-i\psi dW(s))dx\nonumber\\
  &+\int_{0}^{t}\Re\int_{\mathcal{O}}\big((Id-\Delta)|\psi|^2\psi\big)(i\bar\psi dW(s))dx.
\end{align}

We deal with the first term in $III$ as an example, since the other two terms can be done similarly with Burkholder-Davis-Gundy inequality as well.
\begin{align*}
 & E\Big[\sup_{0\leq t\leq T}\|\int_{0}^{t}\Re\int_{\mathcal{O}}\big((Id-\Delta)\bar\psi\big)\big((Id-\Delta)(-i\psi dW(s))\big)dx\|_{\mathbb{L}^2}^p\Big]\\
  &\leq E\Big[\int_{0}^{T}\|\psi\|_{\mathbb{H}^2}^4 \|{\bf Q}^{\frac12}\|_{\mathcal{L}_{2}({\mathbb U},\;\mathbb{H}^2)}^{2}dt\Big]^{\frac{p}{2}}\\
  &\leq E\Big[\sup_{0\leq t\leq T}\|\psi(t)\|_{\mathbb{H}^2}^{p}\Big(\int_{0}^{T}\|\psi\|_{\mathbb{H}^2}^2 \|{\bf Q}^{\frac12}\|_{\mathcal{L}_{2}({\mathbb U},\;\mathbb{H}^2)}^{2}dt\Big)^{\frac{p}{2}}\Big]\\
  &\leq \frac18 E\Big(\sup_{0\leq t\leq T}\|\psi(t)\|_{\mathbb{H}^2}^{2p}\Big)+KE\int_{0}^{T}\|\psi(t)\|_{\mathbb{H}^2}^{2p}dt.
\end{align*}
Similar as the proof of Lemma \ref{spatial H2 new}, Gronwall's lemma leads to the assertion.
\end{proof}

\begin{lemma}\label{temporal L2}
Let $p\geq 1$, $\mathcal{O}\subset\mathbb{R}^1$ and $\psi_0\in L^{2p}(\Omega,\mathbb{H}_{0}^{1}(\mathcal{O}))$. There exists a constant
  $K\equiv K(p)$ such that
  \[E\Big(\|\psi(t_{1})-\psi(t_{2})\|_{\mathbb{L}^{2}}^{2p}\Big)\leq K|t_{1}-t_{2}|^p\qquad (0\leq t_2\leq t_1 \leq T).\]
\end{lemma}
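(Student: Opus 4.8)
The plan is to exploit the flow (evolution) property of the mild solution together with the isometry and time-regularity of $S$ recorded in Lemma \ref{lemma1}, and to reduce all four resulting contributions to the a priori bounds of Corollary \ref{spatial H1} and the mass conservation \eqref{L2}. Writing $h:=t_1-t_2\geq 0$ and restarting the integral identity \eqref{integral_form_solution} at time $t_2$, one has $P$-a.s.
\begin{align*}
\psi(t_1)-\psi(t_2)=\big(S(h)-Id\big)\psi(t_2)&-i\int_{t_2}^{t_1}S(t_1-r)|\psi(r)|^2\psi(r)\,dr-\frac12\int_{t_2}^{t_1}S(t_1-r)\psi(r)F_{\bf Q}\,dr\\
&-i\int_{t_2}^{t_1}S(t_1-r)\psi(r)\,dW(r)=:A+B+C+D.
\end{align*}
By the triangle inequality in $L^{2p}(\Omega;\mathbb{L}^2)$ it then suffices to show $\big(E\|X\|_{\mathbb{L}^2}^{2p}\big)^{1/(2p)}\leq Kh^{\frac12}$ for each $X\in\{A,B,C,D\}$, and to sum. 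The advantage of restarting at $t_2$ (rather than differencing the two formulas written from $0$) is that Lemma \ref{lemma1} then only has to be applied to $\psi(t_2)\in\mathbb{H}^1$, and not to the less regular nonlinearity.

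For the semigroup term, Lemma \ref{lemma1} gives $\|A\|_{\mathbb{L}^2}\leq Kh^{\frac12}\|\psi(t_2)\|_{\mathbb{H}^1}$. Since $\|\psi(t_2)\|_{\mathbb{L}^2}=\|\psi_0\|_{\mathbb{L}^2}$ by \eqref{L2} and $\sup_t E\|\nabla\psi(t)\|_{\mathbb{L}^2}^{2p}\leq K$ by Corollary \ref{spatial H1}(i), taking the $2p$-th moment yields $E\|A\|_{\mathbb{L}^2}^{2p}\leq Kh^{p}$. For the two deterministic drift terms I would use that $S(t_1-r)$ is an $\mathbb{L}^2$-isometry, so that $\|B\|_{\mathbb{L}^2}\leq\int_{t_2}^{t_1}\|\psi(r)\|_{\mathbb{L}^6}^3\,dr$ and $\|C\|_{\mathbb{L}^2}\leq \frac12\|F_{\bf Q}\|_{\mathbb{L}^\infty}\int_{t_2}^{t_1}\|\psi(r)\|_{\mathbb{L}^2}\,dr$. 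The one-dimensional embedding $\mathbb{H}^1\hookrightarrow\mathbb{L}^6$, Jensen's inequality in time, and Corollary \ref{spatial H1}(i) bound $E\|B\|_{\mathbb{L}^2}^{2p}$ by $Kh^{2p}$; since $\|F_{\bf Q}\|_{\mathbb{L}^\infty}\leq K\sum_\ell\|{\bf Q}^{\frac12}e_\ell\|_{\mathbb{H}^1}^2\leq K\|{\bf Q}^{\frac12}\|_{\mathcal{L}_2(\mathbb{U},\,\mathbb{H}^3)}^2<\infty$ (using $\mathbb{H}^1\hookrightarrow\mathbb{L}^\infty$ in $d=1$) and $\|\psi(r)\|_{\mathbb{L}^2}=\|\psi_0\|_{\mathbb{L}^2}$, also $E\|C\|_{\mathbb{L}^2}^{2p}\leq Kh^{2p}$. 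Both are of higher order than required.

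The stochastic term $D$ dictates the exponent $\frac12$. Here I would apply the Burkholder--Davis--Gundy inequality,
\[
E\|D\|_{\mathbb{L}^2}^{2p}\leq K\,E\Big(\int_{t_2}^{t_1}\big\|S(t_1-r)\psi(r){\bf Q}^{\frac12}\big\|_{\mathcal{L}_2(\mathbb{U},\,\mathbb{L}^2)}^2\,dr\Big)^p.
\]
Using once more the $\mathbb{L}^2$-isometry of $S(t_1-r)$ and the pointwise bound $\|\psi(r){\bf Q}^{\frac12}e_\ell\|_{\mathbb{L}^2}\leq\|{\bf Q}^{\frac12}e_\ell\|_{\mathbb{L}^\infty}\|\psi(r)\|_{\mathbb{L}^2}$, the Hilbert--Schmidt norm is dominated by $\|\psi(r)\|_{\mathbb{L}^2}^2\sum_\ell\|{\bf Q}^{\frac12}e_\ell\|_{\mathbb{L}^\infty}^2\leq K\|\psi_0\|_{\mathbb{L}^2}^2$, where the series converges because $\mathbb{H}^2\hookrightarrow\mathbb{L}^\infty$ in one dimension and ${\bf Q}^{\frac12}\in\mathcal{L}_2(\mathbb{U},\,\mathbb{H}^3)$. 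Hence the integrand is bounded uniformly in $r$, the $r$-integral produces exactly one factor $h$, and $E\|D\|_{\mathbb{L}^2}^{2p}\leq Kh^{p}E\|\psi_0\|_{\mathbb{L}^2}^{2p}\leq Kh^{p}$ by $\psi_0\in L^{2p}(\Omega;\mathbb{H}_0^1)$.

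Collecting the four bounds gives $E\|\psi(t_1)-\psi(t_2)\|_{\mathbb{L}^2}^{2p}\leq Kh^{p}=K|t_1-t_2|^{p}$ with $K=K(p)$ independent of $t_1,t_2\in[0,T]$. The main obstacle is concentrated in two places: first, justifying the restarted representation (which rests on the uniqueness/flow property of mild solutions and is what lets Lemma \ref{lemma1} act on $\psi(t_2)$ directly); and second, extracting the sharp order $h^{\frac12}$ from the stochastic integral via BDG while controlling the Hilbert--Schmidt norm through mass conservation and the trace-class assumption on ${\bf Q}$. The drift terms $B,C$ are harmless, contributing only the higher order $h^{2p}$, so the rate is governed by $A$ and $D$.
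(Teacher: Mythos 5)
Your proof is correct, but it takes a genuinely different decomposition from the paper's. The paper differences the two mild-solution formulas both written from time $0$: $\psi(t_1)-\psi(t_2)=(S(t_1)-S(t_2))\psi_0+II+III$, where the drift term $II$ and the noise term $III$ are each split into an integral over $[0,t_2]$ involving the semigroup difference $S(t_1-r)-S(t_2-r)$ plus a tail integral over $[t_2,t_1]$; the $\tfrac12$-H\"older estimate of Lemma \ref{lemma1} is then applied to $\psi_0$ and, inside the integrals over $[0,t_2]$, to the integrands $-|\psi|^2\psi+\tfrac{i}{2}\psi F_{\bf Q}$ and $\psi(r)$ measured in ${\mathbb H}^1$ (the latter inside the Burkholder--Davis--Gundy estimate). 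You instead restart the mild formulation at $t_2$, which is legitimately obtained by applying $S(t_1-t_2)$ to \eqref{integral_form_solution} at $t_2$, using $S(t_1-t_2)S(t_2-r)=S(t_1-r)$, and commuting this bounded deterministic operator through the Bochner and stochastic integrals --- so the ``flow property'' you flag as the main obstacle is in fact routine and needs no uniqueness input. Your route concentrates the entire $h^{1/2}$ loss in the single term $(S(h)-Id)\psi(t_2)$ and in the length of $[t_2,t_1]$: the noise term then needs only the pathwise mass conservation \eqref{L2} and the trace-class assumption on ${\bf Q}$ inside BDG (no ${\mathbb H}^1$-bound of $\psi$ under the stochastic integral, which the paper's first stochastic piece requires and for which it even cites Lemma \ref{spatial H2 new}), and the drift terms come out at the higher order $h^{2p}$ with no semigroup-difference estimate at all. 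The price is that Lemma \ref{lemma1} must act on $\psi(t_2)$ rather than $\psi_0$, so you need $\sup_{0\leq t\leq T}E\|\psi(t)\|_{{\mathbb H}^1}^{2p}\leq K$, i.e. Corollary \ref{spatial H1}(i) together with \eqref{L2} --- bounds the paper's argument consumes anyway for its terms $II^{A}$ and $II^{B}$. Both arguments share the same mild looseness about moments: estimating $E\bigl(\int\|\psi\|_{{\mathbb H}^1}^3\,dr\bigr)^{2p}$ requires $6p$-th moments of the ${\mathbb H}^1$-norm, slightly more than the stated hypothesis $\psi_0\in L^{2p}(\Omega;{\mathbb H}_0^1)$ literally provides, so this is not a defect of your proposal relative to the paper.
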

\begin{proof}
  From equation \eqref{integral_form_solution}, we have the following expression for $\psi(t_{1})-\psi(t_{2})$,
  \begin{align}\label{A1}
    &\psi(t_{1})-\psi(t_{2})=(S(t_{1})-S(t_{2}))\psi_0\nonumber\\
    &+i\Big[\int_{0}^{t_{1}}S(t_{1}-r)\big(-|\psi|^{2}\psi+\frac{i}{2}\psi F_{{\bf Q}}\big)dr-
    \int_{0}^{t_{2}}S(t_{2}-r)\big(-|\psi|^{2}\psi+\frac{i}{2}\psi F_{{\bf Q}}\big)dr\Big]\nonumber\\
    &-i\Big[\int_{0}^{t_{1}}S(t_{1}-r)\psi dW(r)-\int_{0}^{t_{2}}S(t_{2}-r)\psi dW(r)\Big]\nonumber\\
    &=:I+II+III.
  \end{align}
  Because of Lemma \ref{lemma1} {\rm (i)},
  \begin{align*}
  \|S(t_{1})-S(t_{2})\|_{\mathcal{L}(\mathbb{H}_{0}^{1},\;\mathbb{L}^{2})}
  &=\|S(t_2)(S(t_1-t_2)-Id)\|_{\mathcal{L}(\mathbb{H}_{0}^{1},\;\mathbb{L}^{2})}\\
  &\leq
  \|S(t_2)\|_{\mathcal{L}(\mathbb{H}_{0}^1,\;\mathbb{H}_{0}^1)}\|S(t_1-t_2)-Id\|_{\mathcal{L}(\mathbb{H}_{0}^{1},\;\mathbb{L}^{2})}\\
&  \leq K|t_1-t_2|^{\frac12},
  \end{align*}
  such that
  \[E\|I\|_{\mathbb{L}^{2}}^{2p}\leq KE\|\psi_0\|_{\mathbb{H}_{0}^{1}}^{2p}|t_{1}-t_{2}|^p\leq K|t_{1}-t_{2}|^p.\]
  We divide $II$ into two parts,
  \begin{align}\label{12}
    II&=i\int_{0}^{t_{2}}(S(t_{1}-r)-S(t_{2}-r))(-|\psi|^{2}\psi+\frac{i}{2}\psi F_{{\bf Q}})dr+i\int_{t_{2}}^{t_{1}}S(t_{1}-r)(-|\psi|^{2}\psi+\frac{i}{2}\psi F_{{\bf Q}})dr\nonumber\\
    &=:II^{A}+II^{B}.
  \end{align}
  We use $\mathbb{H}^1 \hookrightarrow \mathbb{L}^{\infty}$ to estimate $II^{A}$ as follows,
  \begin{align*}
    \|II^{A}\|_{\mathbb{L}^{2}}
    &\leq K|t_{1}-t_{2}|^{\frac12}\int_{0}^{t_{2}}\|-|\psi|^{2}\psi+\frac{i}{2}\psi F_{{\bf Q}}\|_{\mathbb{H}_{1}}dr\\
    &\leq K|t_{1}-t_{2}|^{\frac12}\int_{0}^{t_{2}}(\|\psi\|_{\mathbb{H}^{1}}^{3}+\|\psi\|_{\mathbb{H}^{1}})dr,
  \end{align*}
  hence $E\|II^{A}\|_{\mathbb{L}^{2}}^{2p}\leq K|t_{1}-t_{2}|^p$ follows from \eqref{L2} and Corollary \ref{spatial H1}.
  By the embedding $\mathbb{H}^1\hookrightarrow \mathbb{L}^2$, the estimation of $II^{B}$ is
  \begin{align*}
    \|II^{B}\|_{\mathbb{L}^{2}}\leq K\int_{t_{1}}^{t_{2}}\|-|\psi|^{2}\psi+\frac{i}{2}\psi F_{{\bf Q}}\|_{\mathbb{L}^{2}}dr
    \leq K\int_{t_{1}}^{t_{2}}(\|\psi\|_{\mathbb{H}^{1}}^{3}+\|\psi\|_{\mathbb{H}^{1}})dr,
  \end{align*}
  thus $E\|II^{B}\|_{\mathbb{L}^{2}}^{2p}\leq K|t_{1}-t_{2}|^{2p}$.
  We split term $III$ as \eqref{12}.
   Based on the Burkholder-Davis-Gundy inequality, Lemma \ref{lemma1} {\rm (i)} and Lemma \ref{spatial H2 new}, the first stochastic term may be estimated as follows,
  \begin{align*}
    E\Big(\|\int_{0}^{t_{2}}(S(t_{1}-r)-S(t_{2}-r))\psi dW(r)\|_{\mathbb{L}^2}^{2p}\Big)
    &\leq KE\Big(\int_{0}^{t_{2}}\|\big(S(t_1-r)-S(t_2-r)\big)\psi\|_{\mathbb{L}^2}^2dr\Big)^{p}\\
    &\leq KE\Big(\int_{0}^{t_2}(t_1-t_2)\|\psi\|_{\mathbb{H}^1}^2dr\Big)^{p}\\
    &\leq K|t_{1}-t_{2}|^p,
  \end{align*}
and the estimate of the second stochastic term is
    \begin{align*}
    E\Big(\|\int_{t_{2}}^{t_{1}}S(t_{1}-r)\psi dW(r)\|_{\mathbb{L}^2}^{2}\Big)
    \leq E\Big(\int_{t_1}^{t_2}\|\psi\|_{\mathbb{L}^2}^2dr\Big)^p\leq K|t_{1}-t_{2}|^p.
  \end{align*}
Thus we have
  \[E\|III\|_{\mathbb{L}^{2}}^{2p}\leq K|t_{1}-t_{2}|^p.\]
  Inserting all these estimates into \eqref{A1} establishes the result.
\end{proof}

From  Lemma \ref{lemma1} {\rm (i)}, i.e.,  $\|S(t_1)-S(t_2)\|_{\mathcal{L}(\mathbb{H}_{0}^1,\;\mathbb{L}^2)}\leq K|t_1-t_2|^{\frac12}$, we may conclude that if we want to show the H\"older continuity property of the solution of \eqref{sdd1} in the $\mathbb{H}_{0}^1(\mathcal{O})$-norm, we need the boundedness of the $\mathbb{H}^2(\mathcal{O})$-norm of the solution, which is stated in Lemma \ref{spatial H2 new}. Therefore we present the following lemma without proof.
\begin{lemma}\label{temporal H1 new}
   Let $p\geq 1$, $\mathcal{O}\subset\mathbb{R}^1$ and $\psi_0\in L^{2p}(\Omega;\mathbb{H}_{0}^1 \cap \mathbb{H}^2(\mathcal{O}))$. There exists
   $K\equiv K(p)>0$ such that
  \[E\Big(\|\psi(t_{1})-\psi(t_{2})\|_{\mathbb{H}^{1}}^{2p}\Big)\leq K|t_{1}-t_{2}|^p\qquad (0\leq t_{2}\leq t_{1}\leq T).\]
\end{lemma}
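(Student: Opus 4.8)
The plan is to run the proof of Lemma~\ref{temporal L2} verbatim, but one spatial derivative higher. Starting from the mild formulation \eqref{integral_form_solution}, I would write the increment $\psi(t_1)-\psi(t_2)$ as the same three-term decomposition $I+II+III$ used in \eqref{A1}: the linear part $(S(t_1)-S(t_2))\psi_0$, the deterministic drift part, and the stochastic part. The guiding principle, already announced in the remark preceding the lemma, is that producing the factor $|t_1-t_2|^{1/2}$ in the $\mathbb{H}^1$-norm (rather than in the $\mathbb{L}^2$-norm) forces every quantity that was measured in $\mathbb{H}^1$ in the proof of Lemma~\ref{temporal L2} to now be measured in $\mathbb{H}^2$; the required uniform $\mathbb{H}^2$-bounds are exactly the content of Lemma~\ref{spatial H2 new} and Lemma~\ref{spatial E max}.

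First I would establish the one-derivative-higher analogue of Lemma~\ref{lemma1}~{\rm(i)}, namely
\[\|S(t)-Id\|_{\mathcal{L}(\mathbb{H}^2\cap\mathbb{H}^1_0,\;\mathbb{H}^1)}\leq K\,t^{1/2}.\]
Since $S(t)=e^{it\Delta}$ commutes with $(Id-\Delta)^{1/2}$, which maps $\mathbb{H}^2\cap\mathbb{H}^1_0$ isomorphically onto $\mathbb{H}^1_0$ with $\|(Id-\Delta)^{1/2}v\|_{\mathbb{H}^1}\simeq\|v\|_{\mathbb{H}^2}$, this reduces directly to Lemma~\ref{lemma1}~{\rm(i)} applied to $(Id-\Delta)^{1/2}\psi_0$. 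Writing $I=S(t_2)\big(S(t_1-t_2)-Id\big)\psi_0$ and using that $S(t_2)$ is an isometry on $\mathbb{H}^1_0$ together with $\psi_0\in L^{2p}(\Omega;\mathbb{H}^1_0\cap\mathbb{H}^2)$, this immediately yields $E\|I\|_{\mathbb{H}^1}^{2p}\leq K|t_1-t_2|^p$.

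For the drift term $II$ and the stochastic term $III$ I would split each exactly as in \eqref{12}, into an ``$A$-part'' (the integral over $[0,t_2]$ carrying the semigroup difference $S(t_1-r)-S(t_2-r)$) and a short ``$B$-part'' (the integral over $[t_2,t_1]$). For the $A$-parts I would apply the upgraded semigroup estimate above, which costs one derivative and leaves the drift $-|\psi|^2\psi+\frac{i}{2}\psi F_{\bf Q}$ and the diffusion coefficient $\psi\,{\bf Q}^{\frac12}$ to be controlled in $\mathbb{H}^2$. In $d=1$, $\mathbb{H}^2\cap\mathbb{H}^1_0$ is a Banach algebra, so $\||\psi|^2\psi\|_{\mathbb{H}^2}\leq K\|\psi\|_{\mathbb{H}^2}^3$, while $\|\psi F_{\bf Q}\|_{\mathbb{H}^2}\leq K\|\psi\|_{\mathbb{H}^2}$ and $\|\psi\,{\bf Q}^{\frac12}\|_{\mathcal{L}_2(\mathbb{U},\;\mathbb{H}^2)}\leq K\|\psi\|_{\mathbb{H}^2}$, using ${\bf Q}^{\frac12}\in\mathcal{L}_2(\mathbb{U},\;\mathbb{H}^3)$; the resulting time integrals of $\mathbb{H}^2$-norms are then bounded by Lemma~\ref{spatial E max}, with the Burkholder--Davis--Gundy inequality inserted first for the stochastic $A$-part. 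For the $B$-parts I would instead use only that $S$ is an isometry on $\mathbb{H}^1_0$, estimate the drift and diffusion in $\mathbb{H}^1$ via Corollary~\ref{spatial H1} and \eqref{L2}, and gain the factor $|t_1-t_2|^p$ from the short integration length (directly for $II^B$, through Burkholder--Davis--Gundy for the stochastic $B$-part). Inserting all bounds into the decomposition of $\psi(t_1)-\psi(t_2)$ then completes the proof.

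The main obstacle is the $\mathbb{H}^2$-control of the cubic nonlinearity inside the $A$-parts. Expanding $\Delta(|\psi|^2\psi)$ produces terms such as $|\nabla\psi|^2\psi$ and $(\nabla\psi)^2\bar\psi$, whose $\mathbb{L}^2$-norms cannot be closed by the $\mathbb{H}^2$-energy alone in dimension $d\geq 2$; this is the same 1D obstruction already isolated in Remark~\ref{re1} for Lemma~\ref{spatial H2 new}, and it is resolved here only by the embedding $\mathbb{H}^1\hookrightarrow\mathbb{L}^\infty$ valid for $\mathcal{O}\subset\mathbb{R}^1$. This is the sole reason the statement is restricted to one space dimension; the remaining estimates are a routine transcription of the proof of Lemma~\ref{temporal L2} one derivative higher.
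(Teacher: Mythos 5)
Your proposal is correct and is exactly the argument the paper intends: the paper states this lemma \emph{without proof}, remarking only that the $\mathbb{H}^1$-H\"older estimate follows by rerunning the proof of Lemma~\ref{temporal L2} one derivative higher, with the semigroup estimate of Lemma~\ref{lemma1} upgraded to $\|S(t)-Id\|_{\mathcal{L}(\mathbb{H}^2\cap\mathbb{H}^1_0,\;\mathbb{H}^1)}\leq Kt^{1/2}$ and the $\mathbb{H}^1$-stability bounds replaced by the $\mathbb{H}^2$-bounds of Lemmas~\ref{spatial H2 new} and \ref{spatial E max} --- which is precisely your plan. Your spectral commutation argument for the upgraded semigroup estimate, the $A$/$B$ splitting of the drift and stochastic terms, and the 1D algebra/embedding estimates correctly supply the details the paper leaves implicit.
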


\section{Stability of the $\theta$-scheme}\label{sec3}
In this section, we consider the following $\theta$-scheme on the uniform partition $I_{n}:=\{t_{n}\}_{n=0}^{M}$ covering $[0,T]$
with mesh-size $\tau=T/M>0$, where $t_{0}=0$ and $t_{M}=T$.
\begin{algorithm}\label{alg_theta}
  Let $\phi^{0}=\psi(t_{0})$ be a given $\mathbb{H}^{1}_{0}(\mathcal{O})$-valued random variable and let $\theta\in[0,1]$. Find for every $n\in\{0,\cdots,M\}$ a $\mathcal{F}_{t_{n+1}}$-adapted random variable $\phi^{n+1}$
  with values in $\mathbb{H}^{1}_{0}(\mathcal{O})$ such that $P$-a.s.
  \begin{align}\label{algorithm 2}
  i\int_{\mathcal{O}}\big(\phi^{n+1}-\phi^{n}\big)zdx&-\tau\int_{\mathcal{O}}\big(\theta\nabla \phi^{n+1}+(1-\theta)\nabla\phi^{n}\big)\nabla zdx\nonumber\\
 & -\frac{\tau}{2}\int_{\mathcal{O}}(|\phi^{n+1}|^{2}+|\phi^{n}|^{2})\phi^{n+\frac12}zdx=\int_{\mathcal{O}}\phi^{n+\frac12}\Delta_{n} W zdx\qquad \forall z\in {\mathbb H}_{0}^{1}(\mathcal{O}),
\end{align}
where $\Delta_{n} W=W(t_{n+1})-W(t_{n})$.
\end{algorithm}

A relevant property of the limiting system \eqref{sdd1} is a bound for the Hamiltonian of its solution; see \eqref{sdd20}. This property is not known for the Crank-Nicolson scheme ($\theta=\frac12$), which is why a truncation strategy is applied to the nonlinearity (see \cite{L1}) or the noise term (\cite{DBD1}), leading to a truncated Crank-Nicolson scheme. The next lemma establishes this property for the $\theta$-scheme and values {$\theta\in[\frac12+c\sqrt{\tau},1]$ with $c\geq c^{*}>0$}, avoiding any truncation. For simplicity, we assume $\phi^0 \in {\mathbb H}^1_0({\mathscr O})$.

\begin{lemma}\label{alg2:H1}
 Let $p\geq 1$ and $\mathcal{O}\subset\mathbb{R}^{d}$ be a bounded Lipschitz domain. Fix $T\equiv t_{M}>0$, and let {$\theta\in[\frac12+c\sqrt{\tau},1]$ with $c\geq c^{*}>0$}.
  Suppose $\tau\leq \tau^{*}$, where $\tau^{*}\equiv \tau^*( \Vert \phi^0\Vert_{{\mathbb H}^1_0} ,T)$. There exist a ${\mathbb H}_{0}^{1}(\mathcal{O})$-valued $\{\mathcal{F}_{t_{n}}\}_{0\leq n\leq M}$-adapted solution $\{\phi^{n};\;n=0,1,\cdots,M\}$ of the $\theta$-scheme \eqref{algorithm 2},
  and a constant {$K\equiv K(p,T,c^{*})>0$} such that
 \begin{align*}
   &{\rm (i)}\quad \max_{1\leq n\leq M}\Big[E\Big(\|\phi^{n}\|_{\mathbb{L}^{2}}^{2^{p}}+\big(\mathcal{H}(\phi^{n})\big)^{2^{(p-1)}}\Big)\Big]
        \leq K,   \\
      &  {\rm (ii)} \quad \max_{1\leq n\leq M}E\|\phi^{n+1}-\phi^{n}\|_{\mathbb{L}^{2}}^{2p}\leq K\tau^{p},\\
      &{\rm (iii)}\quad \max_{1\leq n\leq M } \Big[(2\theta-1)\sum_{k=0}^{n}E\|\nabla(\phi^{k+1}-\phi^{k})\|
        _{\mathbb{L}^{2}}^{2}\Big]\leq K.
 \end{align*}
\end{lemma}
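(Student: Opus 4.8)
The plan is to run the discrete analogue of the energy/Hamiltonian argument that underlies Lemma \ref{lemma3}, driven by two testing identities, and to use the surplus $\theta-\tfrac12\ge c\sqrt\tau$ as a discrete dissipation that absorbs the roughest pieces of the noise. Writing $\delta^n:=\phi^{n+1}-\phi^n$, I would first test \eqref{algorithm 2} with $z=\bar\phi^{n+\frac12}$ and take imaginary parts; since the nonlinear and noise contributions are purely real they drop out, leaving the $\mathbb L^2$-identity
\[\|\phi^{n+1}\|_{\mathbb L^2}^2-\|\phi^n\|_{\mathbb L^2}^2=\tau(2\theta-1)\,\Im\!\int_{\mathcal O}\nabla\delta^n\,\nabla\bar\phi^n\,dx.\]
Then I would test with $z=\bar\phi^{n+1}-\bar\phi^n$ and take real parts: the first term is purely imaginary and vanishes, the gradient term produces $-\tfrac\tau2(\|\nabla\phi^{n+1}\|^2-\|\nabla\phi^n\|^2)-\tfrac{\tau(2\theta-1)}2\|\nabla\delta^n\|^2$ after the elementary identity $\Re\big[(\theta a+(1-\theta)b)(\bar a-\bar b)\big]=\tfrac12(|a|^2-|b|^2)+\tfrac{2\theta-1}2|a-b|^2$, and the cubic term telescopes into $\tfrac14(\|\phi^{n+1}\|_{\mathbb L^4}^4-\|\phi^n\|_{\mathbb L^4}^4)$. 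With $\lambda=-1$ this yields the key Hamiltonian identity
\[\mathcal H(\phi^{n+1})-\mathcal H(\phi^n)+\tfrac{2\theta-1}2\|\nabla\delta^n\|_{\mathbb L^2}^2=-\tfrac1{2\tau}\int_{\mathcal O}\Delta_{n}W\,\big(|\phi^{n+1}|^2-|\phi^n|^2\big)\,dx.\]
Existence of an $\{\mathcal F_{t_n}\}$-adapted $\mathbb H^1_0$-valued solution I would obtain step-by-step from Brouwer's fixed point theorem applied to a spatial Galerkin truncation, using the $\mathbb L^2$-identity for the required a priori bound and a measurable-selection argument for adaptedness.

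The heart of the proof is the case $p=1$ of (i) together with (iii). Summing the Hamiltonian identity over $k=0,\dots,n-1$ puts $\mathcal H(\phi^n)$ and the nonnegative dissipation sum $\tfrac{2\theta-1}2\sum_k\|\nabla\delta^k\|^2$ on the left, and $\mathcal H(\phi^0)$ plus the stochastic remainder on the right; taking the maximum over $n$ and then expectations is the standard route. I would decompose the remainder via $|\phi^{k+1}|^2-|\phi^k|^2=2\Re(\bar\phi^k\delta^k)+|\delta^k|^2$ and, crucially, substitute the scheme in the form $\delta^k=-i\phi^{k+\frac12}\Delta_{k}W-i\tau\big(\theta\Delta\phi^{k+1}+(1-\theta)\Delta\phi^k\big)-i\tfrac\tau2(|\phi^{k+1}|^2+|\phi^k|^2)\phi^{k+\frac12}$. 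This separates the remainder into (a) a martingale part, linear in $\Delta_{k}W$ with an $\mathcal F_{t_k}$-adapted integrand, controlled after summation by the Burkholder–Davis–Gundy inequality exactly as in \eqref{stochatic}; (b) a quadratic-variation part in which $(\Delta_{k}W)^2$ pairs (after integration by parts, using $\mathbf Q^{\frac12}\in\mathcal L_2(\mathbb U,\mathbb H^3)$ and $E\|\Delta_{k}W\|_{\mathbb H^3}^2\lesssim\tau$) with $|\phi^k|^2$ and $|\nabla\phi^k|^2$, reproducing the discrete analogue of the It\^o correction $\tfrac12\int_{\mathcal O}|\psi|^2\sum_\ell|\nabla\mathbf Q^{\frac12}e_\ell|^2$ and contributing $O(\tau)$ per step, hence amenable to a discrete Gronwall lemma against $\mathcal H(\phi^k)$; and (c) dangerous cross terms of the shape $\tau\int_{\mathcal O}\Delta_{k}W\,\nabla\bar\phi^k\,\nabla\delta^k\,dx$. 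These last I would split by Young's inequality as $\tfrac{2\theta-1}8\|\nabla\delta^k\|_{\mathbb L^2}^2+\tfrac{C\tau^2}{2\theta-1}\|\Delta_{k}W\|_{\mathbb L^\infty}^2\|\nabla\phi^k\|_{\mathbb L^2}^2$: the first is absorbed by the dissipation, while the constraint $2\theta-1\ge2c\sqrt\tau$ makes $\tfrac{\tau^2}{2\theta-1}\le\tfrac{\tau^{3/2}}{2c}$, so that after taking expectation and summing this residue is $O(\tau^{1/2})$ against $\max_k\|\nabla\phi^k\|^2$ and disappears in the limit. Combining (a)–(c) with a discrete Gronwall argument then yields $E[\max_n\mathcal H(\phi^n)]\le K$ and, simultaneously, the summed bound (iii).

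The higher moments in (i) I would obtain by a bootstrap that mirrors Step~2 of Lemma \ref{lemma3}: raising the summed identity to the power $2^{p-1}$ and re-applying Burkholder–Davis–Gundy together with Young's inequality doubles the exponent at each stage, which is exactly the source of the dyadic powers $2^{p-1}$ and $2^{p}$ in the statement; the $\mathbb L^2$-moments come along through the $\mathbb L^2$-identity controlled by (iii). Finally, for the increment bound (ii) I would test \eqref{algorithm 2} with $z=\bar\delta^n$ and take imaginary parts, giving $\|\delta^n\|_{\mathbb L^2}^2$ on the left; its right-hand side is dominated by the quadratic-variation term $\int_{\mathcal O}(\Delta_{n}W)^2|\phi^{n+\frac12}|^2\,dx$, which has expectation $O(\tau)$ by independence of $\Delta_{n}W$ from $\mathcal F_{t_n}$ and the $\mathbb L^2$-bound, plus gradient terms $\tau\,\Im\int_{\mathcal O}\nabla\phi^n\nabla\bar\delta^n\,dx$ treated again through the dissipation weight and (iii); higher moments follow by the same squaring. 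I expect the main obstacle to be precisely the control of the stochastic right-hand side of the Hamiltonian identity: isolating the correct discrete It\^o correction from the genuinely harmful gradient cross terms, and verifying that the dissipation generated by $\theta\ge\tfrac12+c\sqrt\tau$ is quantitatively sufficient (and no stronger lower bound on $\theta-\tfrac12$ is needed) to absorb them uniformly in $\tau$.
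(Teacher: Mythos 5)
Your skeleton is the same as the paper's: the two testing identities (imaginary part with $\bar\phi^{n+\frac12}$, real part with $\bar\phi^{n+1}-\bar\phi^{n}$), the trick of substituting the scheme back into the noise term of the discrete Hamiltonian identity, absorption by the dissipation $\tfrac{2\theta-1}{2}\|\nabla(\phi^{n+1}-\phi^{n})\|_{\mathbb L^2}^2$, a discrete Gronwall argument, Galerkin/Brouwer plus measurable selection for existence, the separate test with $\bar\delta^n$ for (ii), and an inductive bootstrap for the dyadic moments. The gap is in the step you yourself flag as the main obstacle: your accounting of the stochastic right-hand side is quantitatively wrong. First, the prefactor $\tfrac{1}{2\tau}$ multiplying $\int_{\mathcal O}(|\phi^{n+1}|^2-|\phi^n|^2)\Delta_n W\,dx$ cancels the factor $\tau$ produced by the substitution, so every surviving term is $O(1)$; your ``dangerous cross term'' $\tau\int_{\mathcal O}\Delta_k W\,\nabla\bar\phi^k\nabla\delta^k\,dx$ with residue $\tfrac{C\tau^2}{2\theta-1}\|\Delta_kW\|^2_{\mathbb L^\infty}\|\nabla\phi^k\|^2_{\mathbb L^2}$ does not correspond to any term actually present. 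Second, splitting $|\phi^{k+1}|^2-|\phi^k|^2=2\Re(\bar\phi^k\delta^k)+|\delta^k|^2$ and substituting the scheme only into the linear part destroys the two cancellations that the paper engineers by writing $2\Re(\bar\phi^{k+\frac12}\delta^k)$ (note these expressions are identically equal, so the fix is to combine before substituting): with $\bar\phi^{k+\frac12}$ the cubic and noise-squared contributions vanish exactly (there is no discrete It\^o-correction term at all in \eqref{ineq1}), and the gradient cross term inherits the prefactor $(2\theta-1)$ through $\Im\big[\theta\nabla\bar\phi^n\nabla\phi^{n+1}+(1-\theta)\nabla\bar\phi^{n+1}\nabla\phi^n\big]=(2\theta-1)\Im\big[\nabla\bar\phi^n\nabla\phi^{n+1}\big]$, so its Young residue $2(2\theta-1)\|\nabla\phi^n\|^2_{\mathbb L^2}\|\Delta_nW\|^2_{\mathbb L^\infty}$ is $O(\tau)$ per step. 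With your decomposition the same term carries the prefactor $\theta\approx\tfrac12$; after absorbing $\tfrac{2\theta-1}{8}\|\nabla\delta^k\|^2$ the residue is $\tfrac{2\theta^2}{2\theta-1}\|\nabla\phi^k\|^2\|\Delta_kW\|^2_{\mathbb L^\infty}$, whose expectation per step is of order $\sqrt\tau$ when $2\theta-1\sim c\sqrt\tau$, and the sum over $M=T/\tau$ steps diverges like $\tau^{-1/2}$. The retained piece $\tfrac{1}{2\tau}\int_{\mathcal O}|\delta^k|^2\Delta_kW\,dx$ is likewise intractable by direct estimation (order $\sqrt\tau$ per step, divergent after summation).

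Moreover, the term that actually consumes the hypothesis $2\theta-1\ge c\sqrt\tau$ is missing from your list: it is the term quadratic in the increment, $I_{23}^d=\theta\Im\int_{\mathcal O}(\bar\phi^{n+1}-\bar\phi^n)\nabla(\phi^{n+1}-\phi^n)\nabla(\Delta_nW)\,dx$, which arises when the non-adapted factors $\phi^{n+1}$, $\phi^{n+\frac12}$ are split as $\phi^n$ plus increment to extract the adapted martingale part $2\Im\int_{\mathcal O}\nabla\phi^n\bar\phi^n\nabla(\Delta_nW)\,dx$. The paper controls $I_{23}^d$ by a cascade of Young inequalities leaving $\tfrac{2\theta-1}{8}\|\nabla\delta^n\|^2+\tfrac18\|\delta^n\|^2+K\tau\big(\|\phi^{n+1}\|^4_{\mathbb L^4}+\|\phi^n\|^4_{\mathbb L^4}\big)+\tfrac{1}{\tau(2\theta-1)^4}\|\nabla(\Delta_nW)\|^8_{\mathbb L^\infty}$; since $E\|\nabla(\Delta_nW)\|^8_{\mathbb L^\infty}=O(\tau^4)$, the last term is $O\big(\tau^3(2\theta-1)^{-4}\big)=O(\tau)$ precisely when $2\theta-1\ge c\sqrt\tau$ --- this, not your $\tfrac{\tau^2}{2\theta-1}\le\tfrac{\tau^{3/2}}{2c}$ computation, is the quantitative role of the threshold. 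The leftover $\|\delta^n\|^2_{\mathbb L^2}$-terms then force the $\mathbb L^2$-increment estimate \eqref{alg diffe L2} to be run in tandem, which is why the paper performs Gronwall on the combined quantity $\mathcal K^n=\tfrac12\|\phi^n\|^2_{\mathbb L^2}+\mathcal H(\phi^n)$. A final minor point: since every assertion of this lemma has the maximum outside the expectation, the adapted martingale term simply has zero mean by independence of $\Delta_nW$ from $\mathcal F_{t_n}$; the Burkholder--Davis--Gundy inequality you invoke is only needed for the $E[\max_n\cdot]$ bound of Lemma \ref{H1 E max}.
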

  \begin{proof}
  {\em Step 1: Existence and $\mathcal{F}_{t_{n}}$-adaptedness.}
  Fix a set $\Omega^\prime\subset\Omega$, $P(\Omega^\prime)=1$ such that $W(t,x)\in {\mathbb U}$ for all $t\in[0,T]$ and $\omega\in \Omega^{\prime}$. In the following, let us assume that $\omega\in\Omega^\prime$.
  The existence of iterates $\{\phi^{n};\;n=0,1,\cdots,M\}$ follows from a standard Galerkin method and Brouwer's theorem, in combination with assertion {\rm (i)}.

 Define a map
 \[\Lambda:\; {\mathbb H}_0^1\times {\mathbb U}\ni\big(\phi^{n},\; \Delta_{n}W\big)\rightarrow \Lambda(\phi^{n},\Delta_{n}W)\in\mathcal{P}({\mathbb H}_0^1),\]
 where $\mathcal{P}({\mathbb H}_0^1)$ denotes the set of all subsets of ${\mathbb H}_0^1(\mathcal{O})$, and  $\Lambda(\phi^{n},\Delta_{n}W)$ is the set of solutions $\phi^{n+1}$ of \eqref{algorithm 2}. By the closedness of the graph of $\Lambda$ and a selector theorem (\cite{BT}, Theorem 3.1), there exists a universally and Borel measurable map $\lambda_{n}:{\mathbb H}_0^1\times {\mathbb U}\rightarrow {\mathbb H}_0^1$ such that $\lambda_{n}(s_1,\;s_2)\in\Lambda(s_1\;s_2)$ for all
 $(s_1,\;s_2)\in {\mathbb H}_0^1\times {\mathbb U}$. Therefore, $\mathcal{F}_{t_{n+1}}$-measurability of $\phi^{n+1}$ follows from the Doob-Dynkin lemma.

  {\em Step 2: Case $p=1$ for {\rm(i)}, {\rm(ii)} and {\rm(iii)}.}
    Consider equation \eqref{algorithm 2} for one $\omega\in\Omega$ and choose $z=\bar{\phi}^{n+\frac12}(\omega)$. Then take the imaginary part to get
    \begin{align}\label{alg L2}
      \frac12\|\phi^{n+1}\|_{\mathbb{L}^{2}}^{2}-\frac12\|\phi^{n}\|_{\mathbb{L}^{2}}^{2}&=\tau\Im\int_{\mathcal{O}}\big(\theta\nabla\phi^{n+1}+(1-\theta)\nabla\phi^{n}\big)\nabla\bar{\phi}^{n+\frac12}dx\nonumber\\
      &=\frac{(1-2\theta)\tau}{2}\Im\int_{\mathcal{O}}\nabla\phi^{n}\nabla\bar{\phi}^{n+1}dx\\
      &\leq \frac{2\theta-1}{4}\tau\Big(\|\nabla\phi^{n+1}\|_{\mathbb{L}^{2}}^{2}+\|\nabla\phi^{n}\|_{\mathbb{L}^{2}}^{2}\Big),\nonumber
    \end{align}
    where $\Re\big[(a-b)(\bar a+\bar b)\big]=|a|^2-|b|^2$ is used on the left-hand side.
    Next, we choose $z=-(\bar{\phi}^{n+1}-\bar{\phi}^{n})(\omega)$ in \eqref{algorithm 2},  and take the real part. We obtain
    \begin{align}\label{alg H1}
     &\Big(  \frac12\|\nabla\phi^{n+1}\|_{\mathbb{L}^{2}}^{2}+\frac14\|\phi^{n+1}\|_{\mathbb{L}^{4}}^{4}\Big)-\Big(\frac12\|\nabla\phi^{n}\|_{\mathbb{L}^{2}}^{2}+\frac14\|\phi^{n}\|_{\mathbb{L}^{4}}^{4}\Big)
      +\frac{(2\theta-1)}{2}\|\nabla(\phi^{n+1}-\phi^{n})\|_{\mathbb{L}^{2}}^{2}\nonumber\\
      &=-\frac{1}{\tau}\int_{\mathcal{O}}(|\phi^{n+1}|^{2}-|\phi^{n}|^{2})\Delta_{n} Wdx.
    \end{align}
    We will see that the last term on the left-hand side helps to bound the stochastic integral term,
    which is restated as follows by using the equation \eqref{algorithm 2}, properties of the real and imaginary parts of a complex number, and the fact that $W$ is real-valued,
    \begin{align*}
     & \int_{\mathcal{O}}(|\phi^{n+1}|^{2}-|\phi^{n}|^{2})\Delta_{n} Wdx\\
      &=2\Re \int_{\mathcal{O}}\bar{\phi}^{n+\frac12}(\phi^{n+1}-\phi^{n})\Delta_{n} Wdx\\
      &=2\Re \int_{\mathcal{O}}\bar{\phi}^{n+\frac12}
      \Big(i\tau\big(\theta\Delta\phi^{n+1}+(1-\theta)\Delta\phi^{n}\big)-i\frac{\tau}{2}(|\phi^{n+1}|^{2}+|\phi^{n}|^{2})\phi^{n+\frac12}-i\phi^{n+\frac12}\Delta_{n} W\Big)\Delta_{n} Wdx\\
      &=(1-2\theta)\tau\Im\int_{\mathcal{O}}\nabla\bar{\phi}^{n}\nabla\phi^{n+1}\Delta_{n} Wdx-2\tau\theta\Im\int_{\mathcal{O}}\bar{\phi}^{n+\frac12}\nabla\phi^{n+1}\nabla(\Delta_{n} W)dx
      -2\tau(1-\theta)\Im\int_{\mathcal{O}}\bar{\phi}^{n+\frac12}\nabla\phi^{n}\nabla(\Delta_{n} W)dx.
    \end{align*}
    We used integration by parts in the last step. By
   plugging it into equation \eqref{alg H1}, we find
    \begin{align}\label{ineq1}
    &\mathcal{H}(\phi^{n+1})-\mathcal{H}(\phi^{n})+\frac{2\theta-1}{2}\|\nabla(\phi^{n+1}-\phi^{n})\|_{\mathbb{L}^2}^2\nonumber\\
    &=(2\theta-1)\Im\int_{\mathcal{O}}\nabla\bar{\phi}^{n}\nabla\phi^{n+1}\Delta_{n} Wdx+2\theta\Im\int_{\mathcal{O}}\bar{\phi}^{n+\frac12}\nabla\phi^{n+1}\nabla(\Delta_{n} W)dx
      +2(1-\theta)\Im\int_{\mathcal{O}}\bar{\phi}^{n+\frac12}\nabla\phi^{n}\nabla(\Delta_{n} W)dx\nonumber\\
      &=:I_{1}+I_{2}+I_{3}.
    \end{align}
    Next we estimate the three terms separately. Because of $\Im\big(|\nabla\phi^{n}|^2\big)=0$, we have
    \begin{align*}
      I_{1}&=(2\theta-1)\Im\int_{\mathcal{O}}\nabla\bar{\phi}^{n}\big(\nabla\phi^{n+1}-\nabla\phi^{n}\big)\Delta_{n} Wdx\\
      &\leq \frac{2\theta-1}{8} \|\nabla\phi^{n+1}-\nabla\phi^{n}\|_{\mathbb{L}^2}^2+2(2\theta-1)\|\nabla\phi^{n}\|_{\mathbb{L}^2}^2\|\Delta_{n} W\|_{\mathbb{L}^{\infty}}^2.
    \end{align*}
    Rearranging terms and the identity $\phi^{n+\frac12}=\phi^{n}+\frac{\phi^{n+1}-\phi^{n}}{2}$ lead to
    \begin{align}\label{i2i3}
      I_{2}+I_{3}
      =&2\theta\Im \int_{\mathcal{O}}\bar{\phi}^{n+\frac12}\big(\nabla\phi^{n+1}-\nabla\phi^{n}\big)\nabla(\Delta_{n} W)dx
      +2\Im \int_{\mathcal{O}}\bar{\phi}^{n+\frac12}\nabla\phi^{n}\nabla(\Delta_{n} W)dx\nonumber\\
      =&2\theta \Im \int_{\mathcal{O}}\bar\phi^{n}\big(\nabla\phi^{n+1}-\nabla\phi^{n}\big)\nabla(\Delta_{n} W)dx+\theta\Im
      \int_{\mathcal{O}}(\bar{\phi}^{n+1}-\bar\phi^{n})\big(\nabla\phi^{n+1}-\nabla\phi^{n}\big)\nabla(\Delta_{n} W)dx\nonumber\\
      &+2\Im\int_{\mathcal{O}}\bar{\phi}^{n}\nabla\phi^{n}\nabla(\Delta_{n} W)dx
      +\Im\int_{\mathcal{O}}(\bar\phi^{n+1}-\bar{\phi}^{n})\nabla\phi^{n}\nabla(\Delta_{n} W)dx
    \end{align}
   Integration by parts for the first term, and using $\Im(a)=-\Im(\bar{a})$ $(a\in\mathbb{C})$ lead to
   \begin{align}\label{i2i3'}
     I_{2}+I_{3}
     =&(1+2\theta)\Im\int_{\mathcal{O}}\nabla\phi^{n}(\bar{\phi}^{n+1}-\bar{\phi}^{n})\nabla(\Delta_{n} W)dx
      +2\Im\int_{\mathcal{O}}\nabla\phi^{n}\bar{\phi}^{n}\nabla(\Delta_{n} W)dx\nonumber\\
      &-2\theta\Im\int_{\mathcal{O}}\bar{\phi}^{n}(\phi^{n+1}-\phi^{n})\Delta(\Delta_{n} W)dx
      + \theta\Im\int_{\mathcal{O}}(\bar{\phi}^{n+1}-\bar{\phi}^{n})(\nabla\phi^{n+1}-\nabla\phi^{n})\nabla(\Delta_{n} W)dx\nonumber\\
      =&:I_{23}^{a}+I_{23}^b+I_{23}^c+I_{23}^d.
   \end{align}
    The estimation of the first three terms is as follows,
    \begin{align}\label{eq4}
      I_{23}^{a}+I_{23}^b+I_{23}^c\leq&
      \frac14\|\phi^{n+1}-\phi^{n}\|_{\mathbb{L}^2}^2+K\|\nabla\phi^{n}\|_{\mathbb{L}^2}^2\|\nabla(\Delta_{n} W)\|_{\mathbb{L}^{\infty}}^2
      +K\|\phi^{n}\|_{\mathbb{L}^2}^2\|\Delta(\Delta_{n} W)\|_{\mathbb{L}^{\infty}}^2\nonumber\\
      &+2\Im\int_{\mathcal{O}}\nabla\phi^{n}\bar{\phi}^{n}\nabla(\Delta_{n} W)dx.
    \end{align}
    The troublesome term is $I_{23}^d$, we estimate it as follows,
    \begin{align}\label{eq5}
      I_{23}^d\leq& \|\nabla\phi^{n+1}-\nabla\phi^{n}\|_{\mathbb{L}^2}\|\phi^{n+1}-\phi^{n}\|_{\mathbb{L}^2}\|\nabla(\Delta_{n} W)\|_{\mathbb{L}^{\infty}}\nonumber\\
      \leq & \frac{2\theta-1}{8}\|\nabla\phi^{n+1}-\nabla\phi^{n}\|_{\mathbb{L}^2}^2+\frac{2}{2\theta-1}\|\phi^{n+1}-\phi^{n}\|_{\mathbb{L}^2}^2\|\nabla(\Delta_{n} W)\|_{\mathbb{L}^{\infty}}^2\nonumber\\
      \leq & \frac{2\theta-1}{8}\|\nabla\phi^{n+1}-\nabla\phi^{n}\|_{\mathbb{L}^2}^2+\frac18\|\phi^{n+1}-\phi^{n}\|_{\mathbb{L}^2}^2+\frac{2}{(2\theta-1)^2}
      \|\phi^{n+1}-\phi^{n}\|_{\mathbb{L}^2}^2\|\nabla(\Delta_{n} W)\|_{\mathbb{L}^{\infty}}^4\nonumber\\
      \leq & \frac{2\theta-1}{8}\|\nabla\phi^{n+1}-\nabla\phi^{n}\|_{\mathbb{L}^2}^2+\frac18\|\phi^{n+1}-\phi^{n}\|_{\mathbb{L}^2}^2
      +K\tau\Big(\|\phi^{n+1}\|_{\mathbb{L}^4}^4+\|\phi^{n}\|_{\mathbb{L}^4}^4\Big)+\frac{1}{\tau(2\theta-1)^4}\|\nabla(\Delta_{n} W)\|_{\mathbb{L}^{\infty}}^8,
    \end{align}
    where we use the embedding $\mathbb{L}^{4}(\mathcal{O})\hookrightarrow \mathbb{L}^{2}(\mathcal{O})$ in the last step. In order to complete the proof for {\rm(i)} and {\rm(ii)}, we need to bound $\|\phi^{n+1}-\phi^{n}\|_{\mathbb{L}^2}^{2}$, which appears in the last two estimates \eqref{eq4} and \eqref{eq5}. For this purpose, we
     test the equation \eqref{algorithm 2} with $(\bar{\phi}^{n+1}-\bar{\phi}^{n})(\omega)$, then take the imaginary part. Because of $\phi^{n+\frac12}=\phi^{n}+\frac{\phi^{n+1}-\phi^{n}}{2}$, we get
     \begin{align*}
       \|\phi^{n+1}-\phi^{n}\|_{\mathbb{L}^2}^{2}=&\tau\Im\int_{\mathcal{O}}\big(\theta\nabla\phi^{n+1}+(1-\theta)\nabla\phi^{n}\big)\nabla(\bar{\phi}^{n+1}-\bar{\phi}^{n})dx
       +\frac{\tau}{2}
       \Im\int_{\mathcal{O}}(|\phi^{n+1}|^{2}+|\phi^{n}|^{2})\phi^{n}\bar{\phi}^{n+1}dx\\
       &+\Im\int_{\mathcal{O}}\phi^{n}(\bar\phi^{n+1}-\bar\phi^{n})\Delta
       W_{n}dx.
     \end{align*}
     Estimating this equality leads to
     \begin{align}\label{alg diffe L2}
      \frac12 \|\phi^{n+1}-\phi^{n}\|_{\mathbb{L}^2}^{2}\leq & K\tau\Big(\frac12\|\nabla\phi^{n+1}\|_{\mathbb{L}^2}^{2}+\frac12\|\nabla\phi^{n}\|_{\mathbb{L}^2}^{2}+\frac14\|\phi^{n+1}\|_{\mathbb{L}^4}^{4}
      +\frac14||\phi^{n}||_{\mathbb{L}^4}^{4}\Big)
       +K\|\phi^{n}\|_{\mathbb{L}^2}^{2}\|\Delta_{n} W\|_{\mathbb{L}^{\infty}}^{2}\nonumber\\
       =&K\tau\Big(\mathcal{H}(\phi^{n+1})+\mathcal{H}(\phi^{n})\Big)+K\|\phi^{n}\|_{\mathbb{L}^2}^{2}\|\Delta_{n} W\|_{\mathbb{L}^{\infty}}^{2},
     \end{align}
     where Young's inequality is applied, and the term $\frac12 \|\phi^{n+1}-\phi^{n}\|_{\mathbb{L}^2}^{2}$ which appears from the stochastic term is absorbed in the left-hand side.

     We may now combine estimate \eqref{alg diffe L2} with \eqref{alg L2} and \eqref{ineq1}. By denoting $\mathcal{K}^{n}=\frac12\|\phi^{n}\|_{\mathbb{L}^2}^{2}+\mathcal{H}(\phi^{n})$, we obtain
     \begin{align}\label{eq6}
       &\mathcal{K}^{n+1}-\mathcal{K}^{n}+\frac{1}{8} \|\phi^{n+1}-\phi^{n}\|_{\mathbb{L}^2}^{2}+\frac{2\theta-1}{4} \|\nabla\phi^{n+1}-\nabla\phi^{n}\|_{\mathbb{L}^2}^{2}\nonumber\\
      & \leq K\tau\Big( \mathcal{K}^{n+1} +\mathcal{K}^{n}\Big)+K\|\nabla\phi^{n}\|_{\mathbb{L}^2}^2\|\Delta_{n} W\|_{\mathbb{L}^{\infty}}^2+K\|\phi^{n}\|_{\mathbb{L}^2}^2\|\Delta(\Delta_{n} W)\|_{\mathbb{L}^{\infty}}^2
      +K\|\phi^{n}\|_{\mathbb{L}^2}^2\|\Delta_{n} W\|_{\mathbb{L}^{\infty}}^2\nonumber\\
      &\quad
      +2\Im\int_{\mathcal{O}}\nabla\phi^{n}\bar\phi^{n}\nabla(\Delta_{n} W)dx+\frac{1}{\tau(2\theta-1)^4}\|\nabla(\Delta_{n} W)\|_{\mathbb{L}^{\infty}}^{8}\nonumber\\
      &=:K\tau\Big( \mathcal{K}^{n+1} +\mathcal{K}^{n}\Big)+A.
     \end{align}
     In order to efficiently bound the expectation of the last term, we recall that $E\|\nabla(\Delta_{n} W)\|_{\mathbb{L}^{\infty}}^{8}=O(\tau^4)$
     to admit {$2\theta-1\geq c\sqrt{\tau}$ with $c\geq c^{*}>0$}.

     After applying expectations on both sides of \eqref{eq6}, one arrives at
     \begin{align*}
       E\mathcal{K}^{n+1}-E\mathcal{K}^{n}+\frac{1}{8} E\|\phi^{n+1}-\phi^{n}\|_{\mathbb{L}^2}^{2}+\frac{2\theta-1}{4} E\|\nabla\phi^{n+1}-\nabla\phi^{n}\|_{\mathbb{L}^2}^{2}\leq K\tau+K\tau \Big(E\mathcal{K}^{n+1}+E\mathcal{K}^{n}\Big).
     \end{align*}

     The discrete Gronwall's lemma then leads to the assertions of this lemma in case $\tau\leq\tau^{*}$ is chosen.

     {\em Step 3: Case $p\geq 2$ for {\rm(i)}.} In order to show the assertion ${\rm(i)}$, we employ an inductive argument. To obtain the result for $p=2$, we multiply equality
   \eqref{eq6} by $\mathcal{K}^{n+1}$ and use the identity $(a-b)a=\frac12 \big(a^{2}-b^{2}+ (a-b)^{2}\big)$, where $a,b\in\mathbb{R}$, to get
    \begin{align}\label{eq8}
      \frac12 \Big[(\mathcal{K}^{n+1})^{2}-(\mathcal{K}^{n})^{2}\Big]+\frac12 (\mathcal{K}^{n+1}-\mathcal{K}^{n})^{2}
      \leq K\tau\Big((\mathcal{K}^{n+1})^{2}+(\mathcal{K}^{n})^{2}\Big)+A\mathcal{K}^{n+1},
    \end{align}
    where $A$ is from \eqref{eq6}.
   Applying expectation on both sides of \eqref{eq8}, we have
   \begin{align}\label{17}
        &\frac12 E\Big[(\mathcal{K}^{n+1})^{2}-(\mathcal{K}^{n})^{2}\Big]+\frac12 E(\mathcal{K}^{n+1}-\mathcal{K}^{n})^{2}\nonumber\\
       &\leq K\tau \Big(E(\mathcal{K}^{n+1})^{2}+ E(\mathcal{K}^{n})^{2}\Big)+\frac14E(\mathcal{K}^{n+1}-\mathcal{K}^{n})^{2}+K\tau.
   \end{align}
   In order to verify this inequality, we may restrict ourselves to the integral term in \eqref{eq6},
    since other terms can be easily estimated by Young's inequality.
  By the independency property of increments of the Wiener process, we obtain
   \begin{align*}
     E\Big[\mathcal{K}^{n+1}\Im\int_{\mathcal{O}}\nabla\phi^{n}\bar\phi^{n}\nabla(\Delta_{n} W)dx\Big]
     &=E\Big[\big(\mathcal{K}^{n+1}-\mathcal{K}^{n}\big)\Im\int_{\mathcal{O}}\nabla\phi^{n}\bar\phi^{n}\nabla(\Delta_{n} W)dx\Big]\\
     &\leq \frac14 E(\mathcal{K}^{n+1}-\mathcal{K}^{n})^{2}+ K\tau E(\mathcal{K}^{n})^2,
   \end{align*}
   and the leading term may be absorbed by the left-hand side of \eqref{17}.
   Therefore we have the conclusion of ${\rm(i)}$ in the case $p=2$ via the discrete Gronwall's lemma.
By repeating this procedure, one obtains the result for each $p\in \mathbb{N}$.

   {\em Step 4: Case $p\geq 2$ for {\rm(ii)}.}  We prove it for the case $p=2$, since for general $p$, the result follows from assertion ${\rm(i)}$. We deal with inequality \eqref{alg diffe L2} by squaring it,
   \begin{equation*}
     \|\phi^{n+1}-\phi^{n}\|_{\mathbb{L}^2}^{4}\leq K\tau^2 \Big((\mathcal{K}^{n+1})^2+ (\mathcal{K}^{n})^2\Big)+K\|\phi^{n}\|_{\mathbb{L}^2}^{4}\|\Delta_{n} W\|_{\mathbb{L}^{\infty}}^{4}.
   \end{equation*}
   Applying expectations leads to assertion ${\rm(ii)}$ in the case of $p=2$.
By repeating this procedure, one obtains the result for each $p\in \mathbb{N}$.
  \end{proof}

{\begin{remark}\label{r-two}
 A compactness argument is used in \cite{DBD1} to prove convergence of a family of (adapted, continuous) interpolating processes of the numerical solution towards  a mild solution of (\ref{S_NLSE_Ito}) for the case ${\mathcal O}={\mathbb R}^d$;
 a crucial prerequisite for it are the lemmas \cite[Lemmas 3.3 and 3.4]{DBD1}, which here are sharpened to Lemma~\ref{alg2:H1}.

 As is stated in Remark 2, a  mild solution of (\ref{S_NLSE_Ito}) may be constructed for the bounded domain case
 ${\mathcal O}\subset{\mathbb R}^1$ by a contraction argument following~\cite{DBD0a}; alternatively, we may follow the strategy of \cite{DBD1} and use the uniform bounds in Lemma \ref{alg2:H1} for a compactness argument which establishes
convergence of  (interpolated in time) iterates $\{\phi^{n};\;n=0,1,\cdots,M\}$ solving Algorithm~\ref{alg_theta}  towards the unique mild solution of
(\ref{S_NLSE_Ito}) for the case ${\mathcal O}\subset{\mathbb R}^1$.
No additional truncation parameter (and related stopping times) is involved in this construction based on
Algorithm~\ref{alg_theta}, which would otherwise require a proper balancing with the discretization parameter in this (practical) construction process of a solution for (\ref{S_NLSE_Ito}) as in \cite{DBD1}.
%
 \end{remark}}

  \begin{lemma}\label{H1 E max}
  Let $p\geq 1$.  Under the assumptions made in Lemma \ref{alg2:H1}, we have
    \[ E\Big[\max_{1\leq n\leq M} \Big(\|\phi^{n}\|_{\mathbb{L}^2}^{2}+\mathcal{H}(\phi^{n}) \Big)^{2^{p-1}}\Big]\leq {K(p,T)}.\]
  \end{lemma}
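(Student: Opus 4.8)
The plan is to bound $E\big[\max_{1\le n\le M}(\mathcal{K}^n)^{2^{p-1}}\big]$ with $\mathcal{K}^n=\frac12\|\phi^n\|_{\mathbb{L}^2}^2+\mathcal{H}(\phi^n)$; this suffices because $\mathcal{H}(\phi^n)=\frac12\|\nabla\phi^n\|_{\mathbb{L}^2}^2+\frac14\|\phi^n\|_{\mathbb{L}^4}^4\ge 0$ (cf.\ \eqref{alg H1}), so $\|\phi^n\|_{\mathbb{L}^2}^2+\mathcal{H}(\phi^n)\le 2\mathcal{K}^n$. Write $q:=2^{p-1}$. The starting point is the one-step inequality \eqref{eq6} from the proof of Lemma \ref{alg2:H1}. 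Dropping the nonnegative dissipation terms on its left-hand side and summing over $k=0,\dots,n-1$ gives, for every $n\le M$,
\[\mathcal{K}^n\le \mathcal{K}^0+2K\tau\sum_{k=0}^{M}\mathcal{K}^k+\sum_{k=0}^{n-1}A_k,\]
where $A_k$ is the collection $A$ of \eqref{eq6} evaluated at step $k$. The decisive observation is the splitting $A_k=B_k+M_k$, where $M_k:=2\Im\int_{\mathcal{O}}\nabla\phi^k\bar\phi^k\nabla(\Delta_k W)\,dx$ satisfies $E[M_k\mid\mathcal{F}_{t_k}]=0$ (since $\Delta_k W$ is mean-zero and independent of $\mathcal{F}_{t_k}$, while $\nabla\phi^k\bar\phi^k$ is $\mathcal{F}_{t_k}$-measurable), and $B_k\ge 0$ collects the remaining terms, each carrying a power of a Wiener increment whose expectation is $O(\tau)$ or smaller.

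Taking the maximum over $n$ and then the $q$-th moment, the key point is that the maximum is harmless away from the martingale: $\max_n\sum_{k=0}^{n-1}B_k=\sum_{k=0}^{M-1}B_k$, and the drift sum is already independent of $n$. After the elementary bound $(a_1+\cdots+a_4)^q\le 4^{q-1}\sum_i a_i^q$, there remain four expectations. The initial term $E[(\mathcal{K}^0)^q]$ is finite by the hypothesis on the initial data. For the drift sum, discrete Jensen gives $\big(\sum_{k=0}^M\mathcal{K}^k\big)^q\le (M+1)^{q-1}\sum_k(\mathcal{K}^k)^q$, so $(2K\tau)^q E\big[(\sum_k\mathcal{K}^k)^q\big]\le (2KT)^q K$ by the uniform bound $\max_k E[(\mathcal{K}^k)^q]\le K$ of Lemma \ref{alg2:H1}(i), valid since $q=2^{p-1}$. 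The bounded-variation term $E\big[(\sum_k B_k)^q\big]$ is treated similarly: Jensen reduces it to $M^{q-1}\sum_k E[B_k^q]$, and each $E[B_k^q]$ factorizes by independence of $\Delta_k W$ from $\mathcal{F}_{t_k}$ into a bounded moment of $\mathcal{K}^k$ (Lemma \ref{alg2:H1}(i)) times a moment of the Wiener increment. Using the Gaussian increment bounds already invoked for Lemma \ref{alg2:H1} (in particular $E\|\nabla(\Delta_k W)\|_{\mathbb{L}^\infty}^{8q}=O(\tau^{4q})$) together with $2\theta-1\ge c\sqrt{\tau}$ to control the factor $\tfrac{1}{\tau(2\theta-1)^4}$, one checks $E[B_k^q]=O(\tau^q)$, whence $M^{q-1}\sum_k E[B_k^q]=O((M\tau)^q)=O(T^q)$.

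The genuinely new ingredient, and the step I expect to be the main obstacle, is the martingale contribution $E\big[\max_n|\sum_{k=0}^{n-1}M_k|^q\big]$. Here I would invoke the discrete Burkholder-Davis-Gundy inequality in the form involving the predictable quadratic variation,
\[E\Big[\max_{n\le M}\Big|\sum_{k=0}^{n-1}M_k\Big|^q\Big]\le K\,E\Big[\Big(\sum_{k=0}^{M-1}E[M_k^2\mid\mathcal{F}_{t_k}]\Big)^{q/2}\Big].\]
The conditional second moment is estimated by expanding $M_k$ over the eigenbasis and applying Cauchy-Schwarz together with the Hilbert-Schmidt assumption on ${\bf Q}^{1/2}$,
\[E[M_k^2\mid\mathcal{F}_{t_k}]\le K\tau\,\|\nabla\phi^k\|_{\mathbb{L}^2}^2\|\phi^k\|_{\mathbb{L}^4}^2\,\|\nabla{\bf Q}^{\frac12}\|_{\mathcal{L}_2({\mathbb U},\mathbb{L}^4)}^2\le K\tau\,(1+\mathcal{K}^k)^2,\]
where the last step uses $\|\nabla\phi^k\|_{\mathbb{L}^2}^2\le 2\mathcal{H}(\phi^k)$, $\|\phi^k\|_{\mathbb{L}^4}^4\le 4\mathcal{H}(\phi^k)$ and Young's inequality; notably this needs no dimensional restriction, only the Hilbert-Schmidt bound.

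To close, a further discrete Jensen step (using convexity of $x\mapsto x^{q/2}$ when $q\ge 2$, and concavity of $x\mapsto x^{1/2}$ in the case $q=1$) reduces the right-hand side to $K\tau\sum_k E[(1+\mathcal{K}^k)^q]$ respectively $\big(K\tau\sum_k E[(1+\mathcal{K}^k)^2]\big)^{1/2}$, both of which are $O(1)$ by Lemma \ref{alg2:H1}(i). Collecting the four bounds yields $E[\max_n(\mathcal{K}^n)^q]\le K(p,T)$, which is the assertion. The main subtlety throughout is the bookkeeping of dyadic powers: every moment of $\mathcal{K}^k$ that appears (of order $q=2^{p-1}$, and at the next level $2q=2^{p}$ in the $q=1$ martingale estimate) must be of the form $2^{j-1}$ so that Lemma \ref{alg2:H1}(i) applies, which is exactly ensured by the choice $q=2^{p-1}$. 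I emphasize that, in contrast to Lemma \ref{alg2:H1}, no discrete Gronwall argument is needed here, since the uniform-in-$k$ moment bounds of Lemma \ref{alg2:H1}(i) are available to estimate all sums directly.
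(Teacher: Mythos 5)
Your proposal is correct in substance and follows the same core strategy as the paper's proof: both start from the one-step inequality \eqref{eq6}, sum it over the time index, take the maximum in $n$, and control the discrete stochastic sum $\sum_{k}\Im\int_{\mathcal O}\nabla\phi^{k}\bar\phi^{k}\nabla(\Delta_{k} W)\,dx$ by a Burkholder--Davis--Gundy argument. The difference lies in the closing bookkeeping. The paper (which writes out only $p=1$) keeps the drift contribution in the form $K\tau\sum_{\ell}E\big[\max_{j\le\ell}\mathcal K^{j}\big]$, absorbs a fraction of $E\big[\max_{n}\mathcal K^{n}\big]$ produced by the BDG/Young step (as in \eqref{stochatic}), and closes with the discrete Gronwall lemma; you instead exploit the fact that Lemma~\ref{alg2:H1}~(i) already furnishes uniform bounds on $E\big[(\mathcal K^{k})^{2^{p-1}}\big]$, so after Jensen every sum can be estimated directly, with no absorption and no Gronwall. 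Your explicit dyadic handling of general $p$ is also more detailed than the paper's sketch. Both routes work; yours trades the Gronwall argument for the uniform moment bounds that are already available.

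One step needs repair. The discrete BDG inequality you invoke, with the \emph{predictable} bracket $\sum_{k}E[M_k^2\mid\mathcal F_{t_k}]$ on the right-hand side, is a valid general statement only for exponents $q\in[1,2]$; for $q>2$ (i.e., $p\ge 3$) it fails for general discrete martingales --- Rosenthal's inequality requires the additional term $\sum_k E[|M_k|^q]$, and a one-step martingale whose increment equals $\pm a$ with probability $\varepsilon/2$ each and $0$ otherwise shows the extra term cannot be dropped, since $a^q\varepsilon \gg (a^2\varepsilon)^{q/2}$ as $\varepsilon\to 0$. In your setting this is harmless, because conditionally on $\mathcal F_{t_k}$ the increment $M_k$ is a centered Gaussian variable, so that $E[|M_k|^q\mid\mathcal F_{t_k}]=c_q\big(E[M_k^2\mid\mathcal F_{t_k}]\big)^{q/2}$, and the Rosenthal correction is then dominated by $E\big[\big(\sum_k E[M_k^2\mid\mathcal F_{t_k}]\big)^{q/2}\big]$ via $\sum_k a_k^{q/2}\le\big(\sum_k a_k\big)^{q/2}$; alternatively, use BDG with the optional bracket $\sum_k M_k^2$, apply Jensen, and factorize $E[|M_k|^q]$ by independence of $\Delta_k W$ from $\mathcal F_{t_k}$. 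Either repair is routine, but as written the BDG step is unjustified for $p\ge 3$ and should be amended.
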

  \begin{proof}
   We only present the proof for $p=1$.
    We start from \eqref{eq6} for some $0\leq \ell\leq M$, sum over the index from $\ell=0$ to $n$, take the maximum between $0$ and $m\leq M$, and apply expectations.
    We may now employ the result of Lemma \ref{alg2:H1} to conclude that
    \begin{align}
      E\Big(\max_{0\leq n\leq m}\mathcal{K}^n\Big)\leq K+K\tau\sum_{\ell=0}^{m}E\Big(\max_{0\leq j\leq \ell}\mathcal{K}^{j}\Big)
      +E\Big[\max_{0\leq n\leq m}\sum_{\ell=0}^{n}\int_{\mathcal{O}}\nabla\phi^{\ell}\bar\phi^{\ell}\nabla(\Delta_{\ell} W)dx\Big].
    \end{align}
    The bound of the last term is similar to \eqref{stochatic}, using Burkholder-Davis-Gundy inequality.
  \end{proof}

The following lemma asserts approximate conservation of mass (in statistical average) for $\theta\downarrow \frac12$.
   \begin{lemma}\label{alg2:L2'}
  Let $\mathcal{O}\subset\mathbb{R}^{d}$ be a bounded Lipschitz domain, $T\equiv t_{M}>0$ be fixed, and {$\theta\in[\frac12+c\sqrt{\tau},1]$ with $c\geq c^{*}>0$}. There exist a constant {$K \equiv K(T,c^{*})>0$} and $\tau^{*}\equiv \tau^{*}(
   \Vert \phi^0 \Vert_{{\mathbb H}_0^1},T)$ such that
for all $\tau\leq\tau^{*}$, we have
    \begin{align}
      \max_{1\leq n\leq \lfloor T/\tau\rfloor}E\|\phi^{n}\|_{\mathbb{L}^2}^{2}-E\|\phi^{0}\|_{\mathbb{L}^2}^{2}\leq K\Big(\tau^{\frac34}+(1-2\theta)\tau^{\frac14}\Big).
    \end{align}
  \end{lemma}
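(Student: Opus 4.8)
The plan is to turn the one-step mass identity \eqref{alg L2} into a telescoping sum and then exploit the two a priori bounds already established in Lemma~\ref{alg2:H1}. Starting from
\[
\tfrac12\|\phi^{n+1}\|_{\mathbb{L}^2}^2-\tfrac12\|\phi^{n}\|_{\mathbb{L}^2}^2=\frac{(1-2\theta)\tau}{2}\,\Im\int_{\mathcal O}\nabla\phi^{n}\nabla\bar\phi^{n+1}\,dx,
\]
I would first use $\Im\int_{\mathcal O}|\nabla\phi^{n}|^2\,dx=0$ to replace $\nabla\bar\phi^{n+1}$ by the increment $\nabla(\bar\phi^{n+1}-\bar\phi^{n})$, so that the right-hand side becomes $\tfrac{(1-2\theta)\tau}{2}\Im\int_{\mathcal O}\nabla\phi^{n}\nabla(\bar\phi^{n+1}-\bar\phi^{n})\,dx$. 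Summing this identity over $k=0,\dots,n-1$, taking expectations, and using a pathwise Cauchy--Schwarz inequality in space yields
\[
\tfrac12 E\|\phi^{n}\|_{\mathbb{L}^2}^2-\tfrac12 E\|\phi^{0}\|_{\mathbb{L}^2}^2\le\frac{2\theta-1}{2}\,\tau\sum_{k=0}^{n-1}E\Big[\|\nabla\phi^{k}\|_{\mathbb{L}^2}\,\|\nabla(\phi^{k+1}-\phi^{k})\|_{\mathbb{L}^2}\Big].
\]

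Next I would apply Cauchy--Schwarz over $\Omega$ and then over the time index $k$, which dominates the sum by the product $\big(\sum_k E\|\nabla\phi^{k}\|_{\mathbb{L}^2}^2\big)^{1/2}\big(\sum_k E\|\nabla(\phi^{k+1}-\phi^{k})\|_{\mathbb{L}^2}^2\big)^{1/2}$. The first factor is controlled by Lemma~\ref{alg2:H1}\,(i): since $\|\nabla\phi^{k}\|_{\mathbb{L}^2}^2\le 2\mathcal H(\phi^{k})$ and $E\,\mathcal H(\phi^{k})\le K$, and there are at most $\lfloor T/\tau\rfloor\le T/\tau$ summands, one gets $\sum_k E\|\nabla\phi^{k}\|_{\mathbb{L}^2}^2\le KT/\tau$. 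The second factor is exactly the quantity estimated in Lemma~\ref{alg2:H1}\,(iii), giving $\sum_k E\|\nabla(\phi^{k+1}-\phi^{k})\|_{\mathbb{L}^2}^2\le K/(2\theta-1)$. Substituting both, the prefactor $\tfrac{2\theta-1}{2}\tau$ combines with the $\tau^{-1/2}$ and $(2\theta-1)^{-1/2}$ coming from these bounds to leave
\[
\max_{1\le n\le\lfloor T/\tau\rfloor}E\|\phi^{n}\|_{\mathbb{L}^2}^2-E\|\phi^{0}\|_{\mathbb{L}^2}^2\le K\,(2\theta-1)^{1/2}\,\tau^{1/2};
\]
the estimate is uniform in $n$ (both auxiliary bounds are), so passing to the maximum is immediate.

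Finally, I would rewrite $(2\theta-1)^{1/2}\tau^{1/2}=\sqrt{\tau^{3/4}\cdot(2\theta-1)\tau^{1/4}}$ and apply Young's inequality $\sqrt{ab}\le\tfrac12(a+b)$ with $a=\tau^{3/4}$ and $b=(2\theta-1)\tau^{1/4}$ to reach the asserted bound $K\big(\tau^{3/4}+(2\theta-1)\tau^{1/4}\big)$. Throughout, $\tau\le\tau^{*}$ is required so that the iterates and all estimates of Lemma~\ref{alg2:H1} are available. The main obstacle is the bookkeeping of the exponents: the factor $(2\theta-1)$ produced by the mass identity must be balanced against the factor $(2\theta-1)^{-1}$ from the summed increment bound~(iii), while the $n\le T/\tau$ count supplies the compensating $\tau^{-1/2}$; collapsing these to the single scale $(2\theta-1)^{1/2}\tau^{1/2}$ and then recognizing, via the Young split, that the bound records separately the genuine timestep error $\tau^{3/4}$ and the $\theta$-deviation error $(2\theta-1)\tau^{1/4}$ is the delicate point, and it is precisely what singles out $\theta=\tfrac12+\sqrt\tau$ as the choice giving (approximate) mass conservation at order $\tau^{3/4}$.
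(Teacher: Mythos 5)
Your proposal is correct and takes essentially the same route as the paper's proof: both start from the mass identity \eqref{E1} written with the increment $\nabla(\bar\phi^{n+1}-\bar\phi^{n})$, and both close the estimate by combining parts {\rm(i)} and {\rm(iii)} of Lemma \ref{alg2:H1} with the count of at most $T/\tau$ time steps. The only difference is bookkeeping: the paper splits each summand by a weighted Young inequality (weights $\tau^{3/8}$ and $\tau^{5/8}$, producing the $\tau^{3/4}$ and $(2\theta-1)\tau^{1/4}$ contributions directly), whereas you apply Cauchy--Schwarz in $\Omega$ and across the sum and recover the stated form by AM--GM at the end --- your intermediate bound $K(2\theta-1)^{1/2}\tau^{1/2}$ is in fact slightly sharper, and, like the paper's own proof, it yields the estimate with the factor $(2\theta-1)\tau^{1/4}$ (the sign $(1-2\theta)$ in the lemma's statement is evidently a typo).
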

  \begin{proof}
   Recall \eqref{alg L2} and use properties of the imaginary part of a complex number to conclude
    \begin{align}\label{E1}
      \|\phi^{n+1}\|_{\mathbb{L}^2}^{2}-\|\phi^{n}\|_{\mathbb{L}^2}^{2}
      &=(1-2\theta)\tau\Im\int_{\mathcal{O}}(\nabla\bar{\phi}^{n+1}-\nabla\bar{\phi}^{n})\nabla\phi^{n}dx\nonumber\\
      &\leq (2\theta-1)\Big(\tau^{\frac38}\|\nabla\phi^{n+1}-\nabla\phi^{n}\|_{\mathbb{L}^2}\Big)\Big(\tau^{\frac58}\|\nabla\phi^{n}\|_{\mathbb{L}^2}\Big)\\
      &\leq \frac{(2\theta-1) \tau^{\frac34}}{2}\|\nabla\bar{\phi}^{n+1}-\nabla\bar{\phi}^{n}\|_{\mathbb{L}^2}^{2}+\frac{(2\theta-1)\tau^{\frac54}}{2}\|\nabla\bar\phi^{n}\|_{\mathbb{L}^2}^{2}.\nonumber
    \end{align}
    Now consider the above inequality for some $0\leq \ell\leq M$, sum over the index from $\ell=0$ to $n$, take the expectation, and use Lemma \ref{alg2:H1} {\rm(i)} and {\rm (iii)} to establish the assertion.
  \end{proof}
A comparison of Lemma \ref{alg2:H1} and Lemma \ref{alg2:L2'} illustrates the role of numerical dissipation in the $\theta$-scheme and suggests a choice {$\theta=\frac12+ c\sqrt{\tau}$} to minimize this effect and approximately preserve the $\mathbb{L}^2$-norm of iterates.
\medskip

The following lemma validates improved stability properties for solutions of Algorithm 1 for $\mathcal{O}\subset\mathbb{R}^{1}$, which will be relevant in the error analysis below. In fact, a consequence of it will be an improved preservation of mass; see Lemma \ref{alg2:L2}.
  \begin{lemma}\label{alg2:H2}
  Let $p\geq 1$, $\mathcal{O}\subset\mathbb{R}^{1}$, $T\equiv t_{M}>0$ be fixed, $\phi^{0}\in L^{2p}(\Omega;\mathbb{H}_{0}^{1}\cap\mathbb{H}^2(\mathcal{O}))$, and $W$ be $\mathbb{H}_{0}^{2}\cap {\mathbb H}^3$-valued. Suppose {$\theta\in [\frac12+c\sqrt{\tau},1]$ with $c\geq c^{*}>0$}. There exist a constant {$K\equiv K(p,T,c^{*})>0$}, and $\tau^{*}\equiv \tau^{*}(
   \Vert \phi^0 \Vert_{\mathbb{H}_0^1\cap\mathbb{H}^2},T)$ such that for all $\tau\leq \tau^{*}$ holds
   \begin{align*}
   &{\rm (1)}\quad \max_{1\leq n\leq M}\Big[ E\Big(\|\phi^{n}\|_{\mathbb{H}^2}^{2}+\sum_{k=0}^{n}\|\phi^{k+1}-\phi^{k}\|_{\mathbb{H}^1}^{2}
            +(2\theta-1)\sum_{k=0}^{n}\|\phi^{k+1}-\phi^{k}\|_{\mathbb{H}^2}^{2}\Big)\Big]\leq K,\\
   &{\rm (ii)}\quad \max_{1\leq n\leq M}E\Big(\|\phi^{n}\|_{\mathbb{H}^2}^{2^p}\Big)\leq K,\\
   &{\rm (iii)}\quad \max_{1\leq n\leq M}E\|\phi^{n+1}-\phi^{n}\|_{\mathbb{H}^1}^{2p}\leq K\tau^{p},\\
   &{\rm (iv)}\quad E\Big(\max_{1\leq n\leq M}\|\phi^{n}\|_{\mathbb{H}^2}^{2^p}\Big)\leq K.
   \end{align*}
  \end{lemma}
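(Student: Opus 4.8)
The plan is to establish a discrete counterpart of the continuous $\mathbb{H}^2$-estimate of Lemma \ref{spatial H2 new}, carrying out the two-test-function strategy that produced Lemma \ref{alg2:H1}, but now one spatial derivative higher. Working with the strong form of \eqref{algorithm 2}, I would test (formally) with $z=(Id-\Delta)(\bar\phi^{n+1}-\bar\phi^n)$ and take the real part; as in Step~1 of the proof of Lemma \ref{alg2:H1}, this is made rigorous on a Galerkin subspace and then passed to the limit using the a~priori bounds themselves. Writing $\theta\Delta\phi^{n+1}+(1-\theta)\Delta\phi^n=\Delta\phi^{n+\frac12}+(\theta-\tfrac12)\Delta(\phi^{n+1}-\phi^n)$, the dispersive term produces the telescoping quantity $\frac12\|(Id-\Delta)\phi^{n+1}\|_{\mathbb{L}^2}^2-\frac12\|(Id-\Delta)\phi^n\|_{\mathbb{L}^2}^2$ together with the crucial numerical-dissipation term $\frac{2\theta-1}{2}\|(Id-\Delta)(\phi^{n+1}-\phi^n)\|_{\mathbb{L}^2}^2$ (equivalent to $(2\theta-1)\|\phi^{n+1}-\phi^n\|_{\mathbb{H}^2}^2$), up to lower-order terms controlled by Lemma \ref{alg2:H1}, while the contribution of $i(\phi^{n+1}-\phi^n)$ drops out, its real part being the purely imaginary quantity $i\big(\|\phi^{n+1}-\phi^n\|_{\mathbb{L}^2}^2+\|\nabla(\phi^{n+1}-\phi^n)\|_{\mathbb{L}^2}^2\big)$. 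To absorb the highest-order part of the cubic nonlinearity I add, exactly as in Lemma \ref{spatial H2 new}, the correction $\Re\int_{\mathcal{O}}\big((Id-\Delta)\bar\phi^n\big)|\phi^n|^2\phi^n\,dx$, so that the working functional $\mathcal{G}(\phi^n):=\frac12\|(Id-\Delta)\phi^n\|_{\mathbb{L}^2}^2+\Re\int_{\mathcal{O}}((Id-\Delta)\bar\phi^n)|\phi^n|^2\phi^n\,dx$ is, by the embedding $\mathbb{H}^1\hookrightarrow\mathbb{L}^6$ and Lemma \ref{alg2:H1}, equivalent to $\|\phi^n\|_{\mathbb{H}^2}^2$ up to already-controlled quantities.

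The decisive and most delicate step is the treatment of the nonlinear and noise terms, which is where $d=1$ enters. The cubic term tested against $(Id-\Delta)(\bar\phi^{n+1}-\bar\phi^n)$ generates several top-order pieces; these must be grouped and partially cancelled against the discrete increment of the correction term, mirroring the cancellations $\frac12I_a^1+I_a^2$ and $I_a^{14}+I_b^4$ of Lemma \ref{spatial H2 new}, the surviving remainders being estimated through integration by parts, H\"older's inequality and the one-dimensional embedding $\mathbb{H}^1\hookrightarrow\mathbb{L}^\infty$ (the exact analogue of the term singled out in Remark \ref{re1}). The noise term $\int_{\mathcal{O}}\phi^{n+\frac12}\Delta_n W\,(Id-\Delta)(\bar\phi^{n+1}-\bar\phi^n)\,dx$ is integrated by parts to move derivatives onto $\Delta_n W$ --- this is what forces the assumption that $W$ be $\mathbb{H}_0^2\cap\mathbb{H}^3$-valued --- and the one genuinely dangerous contribution, carrying the factor $\|(Id-\Delta)(\phi^{n+1}-\phi^n)\|_{\mathbb{L}^2}$, is split by Young's inequality so that a small multiple of $(2\theta-1)\|(Id-\Delta)(\phi^{n+1}-\phi^n)\|_{\mathbb{L}^2}^2$ is absorbed into the dissipation while the residual takes the form $\frac{1}{\tau(2\theta-1)^4}\|\Delta_n W\|_{\mathbb{H}^3}^8$, exactly as in \eqref{eq5}. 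Since $2\theta-1\geq c\sqrt\tau$ and $E\big[\|\Delta_n W\|_{\mathbb{H}^3}^8\big]=O(\tau^4)$ for the assumed regularity of $W$, the expectation of this residual is $O(\tau)$ per step, hence summable over the $O(\tau^{-1})$ steps; this balancing is the entire point of the restriction $\theta\in[\frac12+c\sqrt\tau,1]$ and is the main obstacle of the proof.

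Collecting these estimates yields a one-step inequality of the same shape as \eqref{eq6}, namely $\mathcal{G}(\phi^{n+1})-\mathcal{G}(\phi^n)+\tfrac{2\theta-1}{4}\|(Id-\Delta)(\phi^{n+1}-\phi^n)\|_{\mathbb{L}^2}^2\leq K\tau\big(1+\mathcal{G}(\phi^{n+1})+\mathcal{G}(\phi^n)\big)+A_n$ with a martingale-type remainder $A_n$. After taking expectations the martingale part vanishes and the discrete Gronwall lemma gives the first assertion for $p=1$; the accompanying bound on $\sum_k\|\phi^{k+1}-\phi^k\|_{\mathbb{H}^1}^2$ is supplied by assertion {\rm(iii)} below. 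Assertion {\rm(ii)} then follows by the inductive doubling argument of Step~3 of Lemma \ref{alg2:H1}: multiply the one-step inequality by $\mathcal{G}(\phi^{n+1})$, use $(a-b)a=\frac12(a^2-b^2+(a-b)^2)$, and exploit the independence of $\Delta_n W$ from $\mathcal{F}_{t_n}$ to rewrite $E[\mathcal{G}(\phi^{n+1})A_n]=E[(\mathcal{G}(\phi^{n+1})-\mathcal{G}(\phi^n))A_n]$ and absorb the leading factor before applying discrete Gronwall.

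For assertion {\rm(iii)} I would test \eqref{algorithm 2} with $\Delta(\bar\phi^{n+1}-\bar\phi^n)$ and take the imaginary part: the term $i(\phi^{n+1}-\phi^n)$ now yields $\|\nabla(\phi^{n+1}-\phi^n)\|_{\mathbb{L}^2}^2$, and proceeding exactly as for \eqref{alg diffe L2} but invoking the $\mathbb{H}^2$-bounds {\rm(i)}--{\rm(ii)} gives $\|\phi^{n+1}-\phi^n\|_{\mathbb{H}^1}^2\leq K\tau\big(\mathcal{G}(\phi^{n+1})+\mathcal{G}(\phi^n)\big)+K\|\phi^n\|_{\mathbb{H}^1}^2\|\Delta_n W\|_{\mathbb{H}^2}^2$, whence {\rm(iii)} together with Lemma \ref{alg2:H1}{\rm(ii)} after taking powers and expectations. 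Finally, {\rm(iv)} is obtained by summing the one-step inequality over $n$, taking the maximum before the expectation, and estimating the supremum of the resulting stochastic sum by the Burkholder--Davis--Gundy inequality precisely as in Lemma \ref{H1 E max}, then absorbing the leading term into the left-hand side and closing with discrete Gronwall.
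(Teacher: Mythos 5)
Your overall strategy coincides with the paper's: test the scheme with a second-order test function built from the increment $\bar\phi^{n+1}-\bar\phi^{n}$, take real parts to obtain a telescoping $\mathbb{H}^2$-type energy plus the $(2\theta-1)$-weighted dissipation, treat the noise by substituting the scheme for $\phi^{n+1}-\phi^{n}$ and integrating by parts (using the $\mathbb{H}_0^2\cap\mathbb{H}^3$-regularity of $W$ and the balance $2\theta-1\geq c\sqrt{\tau}$ against $E\|\nabla(\Delta_n W)\|_{\mathbb{L}^\infty}^8=O(\tau^4)$), handle the cubic term by cancellation/telescoping together with the one-dimensional interpolation and embedding, and close with a discrete Gronwall argument; your treatments of (ii), (iii) and (iv) match the paper's (doubling induction; testing with $\Delta(\bar\phi^{n+1}-\bar\phi^{n})$ and imaginary parts; Burkholder--Davis--Gundy as in Lemma \ref{H1 E max}). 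The paper tests with $\Delta(\bar\phi^{n+1}-\bar\phi^{n})$ rather than $(Id-\Delta)(\bar\phi^{n+1}-\bar\phi^{n})$, and it realizes your correction functional $\mathcal{G}$ implicitly, through cubic terms that telescope upon summation over $n$; these are cosmetic differences.

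There is, however, a genuine gap in how you close assertion (1). The one-step inequality you claim, with right-hand side $K\tau\big(1+\mathcal{G}(\phi^{n+1})+\mathcal{G}(\phi^{n})\big)+A_n$, is not obtainable: both the cubic remainders (e.g.\ the analogue of the paper's term $A_b=\Re\int_{\mathcal{O}}\Delta\bar\phi^{n+\frac12}(|\phi^{n+1}|^2-|\phi^{n}|^2)(\phi^{n+1}-\phi^{n})dx$) and the noise terms (the paper's $B_b^1$, $B_c^1$, $B^2$) unavoidably leave behind contributions of the form $\frac18\|\nabla(\phi^{n+1}-\phi^{n})\|_{\mathbb{L}^2}^2$ with order-one coefficients, plus terms like $\tau^{-5}\|\phi^{n+1}-\phi^{n}\|_{\mathbb{L}^2}^{12}$ that require the high moments of Lemma \ref{alg2:H1} (ii). These $\mathbb{H}^1$-increment terms cannot be absorbed into your dissipation, whose prefactor $2\theta-1\sim\sqrt{\tau}$ is small; and interpolating $\|\nabla v\|_{\mathbb{L}^2}^2\leq\|v\|_{\mathbb{L}^2}\|\Delta v\|_{\mathbb{L}^2}$ and invoking Lemma \ref{alg2:H1} (ii) only yields $O(\sqrt{\tau})$ per step, which diverges after summation over $O(\tau^{-1})$ steps. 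The paper resolves this with a dedicated step (its Step 5): testing \eqref{algorithm 2} with $-\Delta(\bar\phi^{n+1}-\bar\phi^{n})$ and taking imaginary parts gives the per-step bound $E\|\nabla(\phi^{n+1}-\phi^{n})\|_{\mathbb{L}^2}^2\leq K\big(\tau+\frac{\tau^2}{2\theta-1}\big)+K\tau E\big(\|\Delta\phi^{n+1}\|_{\mathbb{L}^2}^2+\|\Delta\phi^{n}\|_{\mathbb{L}^2}^2\big)+\big(\frac{2\theta-1}{16}+\frac{\tau}{16}\big)E\|\Delta(\phi^{n+1}-\phi^{n})\|_{\mathbb{L}^2}^2$, which uses only Lemma \ref{alg2:H1} and is substituted into the main estimate \emph{before} Gronwall, its last term being absorbed by the dissipation. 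You do essentially possess this inequality---it is the one you derive for assertion (iii)---but in your architecture it is proved ``invoking the $\mathbb{H}^2$-bounds (i)--(ii)'' and is used only to supply the $\sum_k\|\phi^{k+1}-\phi^{k}\|_{\mathbb{H}^1}^2$ part of the statement; as written, (1) needs (iii) and (iii) needs (1), which is circular. The fix is to promote the per-step $\mathbb{H}^1$-increment estimate to a preliminary step inside the proof of (1), observing that it needs only Lemma \ref{alg2:H1} when the $\|\Delta\phi\|_{\mathbb{L}^2}^2$ terms are kept on the right-hand side for the Gronwall argument, exactly as the paper does.
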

  \begin{proof}
  We formally test equation \eqref{algorithm 2} with $z=\Delta\Big(\bar\phi^{n+1}-\bar\phi^{n}\Big)$ and take the real part. Because of $\theta\Delta\phi^{n+1}+(1-\theta)\Delta\phi^{n}=\Delta\phi^{n+1}+(\theta-1)\big(\Delta\phi^{n+1}-\Delta\phi^{n}\big)$ and
  $\Re\Big(a(\bar a-\bar b)\Big)=\frac12\Big(|a|^2-|b|^2+|a-b|^2\Big)$, we have
  \begin{align}\label{eq7}
    \|\Delta\phi^{n+1}\|_{\mathbb{L}^2}^2-\|\Delta\phi^{n}\|_{\mathbb{L}^2}^2+(2\theta-1)\|\Delta\phi^{n+1}-\Delta\phi^{n}\|_{\mathbb{L}^2}^2
    =&\Re\int_{\mathcal{O}}\big(|\phi^{n+1}|^2+|\phi^{n}|^2\big)\phi^{n+\frac12}\Delta(\bar\phi^{n+1}-\bar\phi^{n})dx\nonumber\\
    &+\frac{2}{\tau}\Re\int_{\mathcal{O}}\phi^{n+\frac12}\Delta_{n} W\Delta(\bar\phi^{n+1}-\bar\phi^{n})dx\nonumber\\
    =:&A+B.
  \end{align}
  {\em Step 1: Estimate of the stochastic integral term B.}
  We use integration by parts to benefit from equation \eqref{algorithm 2} and $W$ being real-valued,
  \begin{align}
       B&=\frac{2}{\tau}\Re\int_{\mathcal{O}}\phi^{n+\frac12}\Delta_{n} W(\Delta\bar{\phi}^{n+1}-\Delta\bar{\phi}^{n})dx\nonumber\\
      &=\frac{2}{\tau}\Re\int_{\mathcal{O}}\Delta(\bar{\phi}^{n+\frac12}\Delta_{n} W)(\phi^{n+1}-\phi^{n})dx\nonumber\\
      &=\frac{2}{\tau}\Re\int_{\mathcal{O}}\Delta(\bar{\phi}^{n+\frac12}\Delta_{n} W)\Big[i\tau\big(\theta\Delta\phi^{n+1}+(1-\theta)\Delta\phi^{n}\big)-i\frac{\tau}{2}(|\phi^{n+1}|^{2}+|\phi^{n}|^{2})\phi^{n+
      \frac12}-i\phi^{n+\frac12}\Delta_{n} W\Big]dx\nonumber\\
      &=2\Re\int_{\mathcal{O}}i\Delta(\bar{\phi}^{n+\frac12}\Delta_{n} W)\big(\theta\Delta\phi^{n+1}+(1-\theta)\Delta\phi^{n}\big)dx
      -\Re\int_{\mathcal{O}}i\Delta(\bar{\phi}^{n+\frac12}\Delta_{n} W)(|\phi^{n+1}|^{2}+|\phi^{n}|^{2})\phi^{n+\frac12}dx\nonumber\\
      &=:B^{1}+B^{2}.
    \end{align}

{\em Step 2: Estimate of term $B^{1}.$}
    We rewrite the term $B^1$ as follows,
    \begin{align}\label{b1}
      B^{1}&=2\Re\int_{\mathcal{O}}i\Delta(\bar{\phi}^{n+\frac12}\Delta_{n} W)\big(\theta\Delta\phi^{n+1}+(1-\theta)\Delta\phi^{n}\big)dx\nonumber\\
      &=2\Re\int_{\mathcal{O}}i\Delta\bar{\phi}^{n+\frac12}\big(\theta\Delta\phi^{n+1}+(1-\theta)\Delta\phi^{n}\big)\Delta_{n} Wdx
      +2\Re\int_{\mathcal{O}}i\bar{\phi}^{n+\frac12}\Delta(\Delta_{n} W)\big(\theta\Delta\phi^{n+1}+(1-\theta)\Delta\phi^{n}\big)dx\nonumber\\
      &\quad+4\Re\int_{\mathcal{O}}i\nabla\bar{\phi}^{n+\frac12}\nabla(\Delta_{n} W)\big(\theta\Delta\phi^{n+1}+(1-\theta)\Delta\phi^{n}\big)dx\nonumber\\
      &=:B^{1}_{a}+B_{b}^{1}+B_{c}^{1}.
    \end{align}
    Since $\theta\Re\big(i\|\Delta \phi^{n+1}\|_{\mathbb{L}^2}^2\big)+(1-\theta)\Re\big(i\|\Delta \phi^{n}\|_{\mathbb{L}^2}^2\big)=0$, we have
    \begin{align*}
      B_{a}^{1}&=\theta\Re\int_{\mathcal{O}}i\Delta\bar{\phi}^{n}\Delta\phi^{n+1}\Delta_{n} Wdx
      +(1-\theta)\Re\int_{\mathcal{O}}i\Delta\bar{\phi}^{n+1}\Delta\phi^{n}\Delta_{n} Wdx\\
      &=(2\theta-1)\Re\int_{\mathcal{O}}i\Delta\bar{\phi}^{n}\Delta\phi^{n+1}\Delta_{n} Wdx\\
      &=(2\theta-1)\Re\int_{\mathcal{O}}i\Delta\bar{\phi}^{n}(\Delta\phi^{n+1}-\Delta\phi^{n})\Delta_{n} Wdx\\
      &\leq (2\theta-1)\|\Delta\phi^{n}\|_{\mathbb{L}^{2}}\|\Delta\phi^{n+1}-\Delta\phi^{n}\|_{\mathbb{L}^{2}}\|\Delta_{n} W\|_{\mathbb{L}^{\infty}}\\
      &\leq \frac{2\theta-1}{8} \|\Delta\phi^{n+1}-\Delta\phi^{n}\|_{\mathbb{L}^{2}}^{2}+2(2\theta-1)\|\Delta\phi^{n}\|_{\mathbb{L}^2}^{2}\|\Delta_{n} W\|_{\mathbb{L}^{\infty}}^{2}.
    \end{align*}
    Therefore $E(B_{a}^{1})\leq \frac{2\theta-1}{8} E\|\Delta\phi^{n+1}-\Delta\phi^{n}\|_{\mathbb{L}^2}^{2}+K\tau E\|\Delta\phi^{n}\|_{\mathbb{L}^2}^{2}$.

    Since $\phi^{n+\frac12}=\phi^{n}+\frac{\phi^{n+1}-\phi^{n}}{2}$, we have the following estimate for the term $B_{b}^1$ in \eqref{b1},
    \begin{align*}
      B_{b}^{1}
      &=2\Re\int_{\mathcal{O}}i\bar\phi^{n}\Delta(\Delta_{n} W)\big(\theta\Delta\phi^{n+1}+(1-\theta)\Delta\phi^{n}\big)dx
      +\Re\int_{\mathcal{O}}i(\bar\phi^{n+1}-\bar\phi^{n})\Delta(\Delta_{n} W)\big(\theta\Delta\phi^{n+1}+(1-\theta)\Delta\phi^{n}\big)dx\\
      &=2\theta\Re\int_{\mathcal{O}}i\bar\phi^{n}\Delta(\Delta_{n} W)\Delta(\phi^{n+1}-\phi^{n})dx
      +2\Re\int_{\mathcal{O}}i\bar\phi^{n}\Delta(\Delta_{n} W)\Delta \phi^{n}dx\\
      &\quad  +\Re\int_{\mathcal{O}}i(\bar\phi^{n+1}-\bar\phi^{n})\Delta(\Delta_{n} W)\big(\theta\Delta\phi^{n+1}+(1-\theta)\Delta\phi^{n}\big)dx
    \end{align*}
    Integration by parts for the first term leads to
    \begin{align*}
      B_b^1&=-2\theta\Re\int_{\mathcal{O}}i\nabla\big(\bar{\phi}^{n}\Delta(\Delta_{n} W)\big)(\nabla\phi^{n+1}-\nabla\phi^{n})dx
      +2\Re\int_{\mathcal{O}}i\bar{\phi}^{n}\Delta(\Delta_{n} W)\Delta\phi^{n}dx\\
       &\quad  +\Re\int_{\mathcal{O}}i(\bar\phi^{n+1}-\bar\phi^{n})\Delta(\Delta_{n} W)\big(\theta\Delta\phi^{n+1}+(1-\theta)\Delta\phi^{n}\big)dx\\
       &\leq \frac18\|\nabla\phi^{n+1}-\nabla\phi^{n}\|_{\mathbb{L}^2}^{2}
      +8\|\nabla\big(\bar{\phi}^{n}\Delta(\Delta_{n} W)\big)\|_{\mathbb{L}^{2}}^{2}+K\frac{1}{\tau}\|\phi^{n+1}-\phi^{n}\|_{\mathbb{L}^2}^{4}\\
      &\quad
      +K\frac{1}{\tau}\|\Delta(\Delta_{n} W)\|_{\mathbb{L}^{\infty}}^{4}+\tau\Big(\|\Delta\phi^{n+1}\|_{\mathbb{L}^{2}}^{2}+\|\Delta\phi^{n}\|_{\mathbb{L}^{2}}^{2}\Big)
      +2\Re\int_{\mathcal{O}}i\bar{\phi}^{n}\Delta(\Delta_{n} W)\Delta\phi^{n}dx.
    \end{align*}
    By assertion ${\rm(i)}$ and ${\rm(ii)}$ of Lemma \ref{alg2:H1}, we get \[E(B_{b}^{1})\leq \frac18 E\|\nabla\phi^{n+1}-\nabla\phi^{n}\|_{\mathbb{L}^{2}}^{2}+K\tau+K\tau\Big(\|\Delta\phi^{n+1}\|_{\mathbb{L}^{2}}^{2}+\|\Delta\phi^{n}\|_{\mathbb{L}^{2}}^{2}\Big).\]

    For term $B_{c}^{1}$, we use again $\phi^{n+\frac12}=\phi^{n}+\frac{\phi^{n+1}-\phi^{n}}{2}$ to obtain
    \begin{align*}
      B_{c}^{1}=&4\theta\Re\int_{\mathcal{O}}i\nabla\bar{\phi}^{n+\frac12}\nabla(\Delta_{n} W)\Delta \phi^{n+1}dx
      +4(1-\theta)\Re\int_{\mathcal{O}}i\nabla\bar{\phi}^{n+\frac12}\nabla(\Delta_{n} W)\Delta \phi^{n}dx\\
      =&4\theta\Re\int_{\mathcal{O}}i\Big(\nabla\bar\phi^{n}+\frac{\nabla\bar\phi^{n+1}-\nabla\bar\phi^{n}}{2}\Big)\nabla(\Delta_{n} W)\Big((\Delta\phi^{n+1}-\Delta\phi^{n})+\Delta\phi^{n}\Big)dx\\
      &+4(1-\theta)\Re\int_{\mathcal{O}}i\Big(\nabla\bar\phi^{n}+\frac{\nabla\bar\phi^{n+1}-\nabla\bar\phi^{n}}{2}\Big)\nabla(\Delta_{n} W)\Delta\phi^{n}dx\, .
      \end{align*}
In the following step, we use that the Wiener process is ${\mathbb H}^2_0$-valued to allow for integration by parts,
      \begin{align*}
      =& 4\theta\Re\int_{\mathcal{O}}i\nabla\bar\phi^{n}\nabla(\Delta_{n} W)\Delta(\phi^{n+1}-\phi^{n})dx +2\Re\int_{\mathcal{O}}i\nabla(\bar\phi^{n+1}-\bar\phi^{n})\nabla(\Delta_{n} W)\Delta\phi^{n}dx\\
      &+4\Re\int_{\mathcal{O}}i\nabla\bar{\phi}^{n}\nabla(\Delta_{n} W)\Delta\phi^{n}dx
      +2\theta\Re\int_{\mathcal{O}}i(\nabla\bar{\phi}^{n+1}-\nabla\bar{\phi}^{n})\nabla(\Delta_{n} W)(\Delta \phi^{n+1}-\Delta\phi^{n})dx.
    \end{align*}
    Integration by parts for the first term then leads to
    \begin{align*}
      B_c^1=&-4\theta\Re\int_{\mathcal{O}}i\nabla\bar{\phi}^{n}\Delta(\Delta_{n} W)(\nabla \phi^{n+1}-\nabla\phi^{n})dx
      +2(1+2\theta)\Re\int_{\mathcal{O}}i(\nabla\bar{\phi}^{n+1}-\nabla\bar{\phi}^{n})\nabla(\Delta_{n} W)\Delta\phi^{n}dx
      \\
      &+4\Re\int_{\mathcal{O}}i\nabla\bar{\phi}^{n}\nabla(\Delta_{n} W)\Delta\phi^{n}dx
      +2\theta\Re\int_{\mathcal{O}}i(\nabla\bar{\phi}^{n+1}-\nabla\bar{\phi}^{n})\nabla(\Delta_{n} W)(\Delta \phi^{n+1}-\Delta\phi^{n})dx.
    \end{align*}

      We only present the estimate of the last term, the remainder terms can be easily bounded as before.
     \begin{align*}
       &2\theta\Re\int_{\mathcal{O}}i(\nabla\bar{\phi}^{n+1}-\nabla\bar{\phi}^{n})\nabla(\Delta_{n} W)(\Delta \phi^{n+1}-\Delta\phi^{n})dx\\
       &\leq2\|\Delta\phi^{n+1}-\Delta\phi^{n}\|_{\mathbb{L}^{2}}\|\nabla\phi^{n+1}-\nabla\phi^{n}\|_{\mathbb{L}^{2}}\|\nabla(\Delta_{n} W)\|_{\mathbb{L}^{\infty}}\\
       &\leq \frac{2\theta-1}{8}\|\Delta\phi^{n+1}-\Delta\phi^{n}\|_{\mathbb{L}^{2}}^2+\frac{8}{2\theta-1}\|\nabla\phi^{n+1}-\nabla\phi^{n}\|_{\mathbb{L}^{2}}^2\|\nabla(\Delta_{n} W)\|_{\mathbb{L}^{\infty}}^2\\
       &\leq \frac{2\theta-1}{8}\|\Delta\phi^{n+1}-\Delta\phi^{n}\|_{\mathbb{L}^{2}}^2+\frac18\|\nabla\phi^{n+1}-\nabla\phi^{n}\|_{\mathbb{L}^{2}}^2
       +\frac{32}{(2\theta-1)^2}\|\nabla\phi^{n+1}-\nabla\phi^{n}\|_{\mathbb{L}^{2}}^2\|\nabla(\Delta_{n} W)\|_{\mathbb{L}^{\infty}}^4\\
       &\leq \frac{2\theta-1}{8}\|\Delta\phi^{n+1}-\Delta\phi^{n}\|_{\mathbb{L}^{2}}^2+\frac18\|\nabla\phi^{n+1}-\nabla\phi^{n}\|_{\mathbb{L}^{2}}^2
       +K\tau\Big(\|\nabla\phi^{n+1}\|_{\mathbb{L}^2}^4+\|\nabla\phi^{n}\|_{\mathbb{L}^2}^4\Big)+\frac{1}{\tau(2\theta-1)^4}\|\nabla(\Delta_{n} W)\|_{\mathbb{L}^{\infty}}^8.
     \end{align*}
     Therefore, for {$2\theta-1\geq c\sqrt{\tau}$ with $c\geq c^{*}>0$} and since $E\|\nabla(\Delta_{n} W)\|_{\mathbb{L}^{\infty}}^8=O(\tau^4)$, by Lemma \ref{alg2:H1} ${\rm(i)}$ we obtain \[E(B_{c}^{1})\leq K\tau+\frac{2\theta-1}{8} E\|\Delta\phi^{n+1}-\Delta\phi^{n}\|_{\mathbb{L}^2}^{2}+\frac38\|\nabla\phi^{n+1}-\nabla\phi^{n}\|_{\mathbb{L}^{2}}^2+K\tau\|\Delta\phi^{n}\|_{\mathbb{L}^2}^{2}.\]
    {\em Step 3: Estimate of term $B^2$.}
    By integration by parts,
     \begin{align*}
     B^{2}
     =&\frac14\Re\int_{\mathcal{O}}i\nabla\Big((\bar{\phi}^{n+1}+\bar\phi^{n})\Delta_{n} W\Big)\nabla\Big((|\phi^{n+1}|^{2}+|\phi^{n}|^{2})(\phi^{n+1}+\phi^{n})\Big)dx\\
     =&\frac14\Re\int_{\mathcal{O}}i\Big(\nabla\bar\phi^{n+1}\Delta_{n} W+\nabla\bar\phi^{n}\Delta_{n} W+\bar\phi^{n+1}\nabla(\Delta_{n} W)+\bar\phi^{n}\nabla(\Delta_{n} W)\Big)
    \Big((\phi^{n+1})^2\nabla\bar\phi^{n+1}+2\nabla\phi^{n+1}|\bar\phi^{n+1}|^2\\
    &+\nabla\phi^{n+1}\bar\phi^{n+1}\phi^{n}
    +\phi^{n+1}\phi^{n}\nabla\bar{\phi}^{n+1}
     +|\phi^{n+1}|^2\nabla\phi^{n}+\nabla\phi^{n}\bar\phi^{n}\phi^{n+1}+2\nabla\phi^{n}|\phi^{n}|^2+\nabla\bar\phi^{n}\phi^{n}\phi^{n+1}\\
     &+\nabla\bar\phi^{n}(\phi^{n})^2
     +|\phi^n|^2\nabla\phi^{n+1}\Big)dx
     \end{align*}
     The estimates of these terms are done by inserting functions of $\phi^{n}$ and using the fact that  $E(\Delta_{n} W|\mathcal{F}_{t_{n}})=0$. So here we only present one troublesome term in $B^{2}$ as an example.
      \begin{align*}
        &\Re\int_{\mathcal{O}}i(\nabla\bar{\phi}^{n+1}\phi^{n+1})^{2}\Delta_{n} Wdx\\
        &=\Re\int_{\mathcal{O}}i\Big((\nabla\bar{\phi}^{n+1}\phi^{n+1})^{2}-(\nabla\bar{\phi}^{n}\phi^{n})^{2}\Big)\Delta_{n} Wdx
        + \Re\int_{\mathcal{O}}i(\nabla\bar{\phi}^{n}\phi^{n})^{2}\Delta_{n} Wdx.
      \end{align*}
      The expectation of the second term is zero. By the identity $a^2-b^2=(a+b)(a-b)$, we deal with the first term below.
      \begin{align}\label{g2}
        & \Re\int_{\mathcal{O}}i\Big((\nabla\bar{\phi}^{n+1}\phi^{n+1})^{2}-(\nabla\bar{\phi}^{n}\phi^{n})^{2}\Big)\Delta_{n} Wdx\nonumber\\
        &= \Re\int_{\mathcal{O}}i(\nabla\bar{\phi}^{n+1}\phi^{n+1}+\nabla\bar{\phi}^{n}\phi^{n})(\nabla\bar{\phi}^{n+1}\phi^{n+1}-\nabla\bar{\phi}^{n}\phi^{n})\Delta_{n} Wdx\nonumber\\
        &= \Re\int_{\mathcal{O}}i(\nabla\bar{\phi}^{n+1}\phi^{n+1}+\nabla\bar{\phi}^{n}\phi^{n})(\nabla\bar{\phi}^{n+1}-\nabla\bar{\phi}^{n})\phi^{n}\Delta_{n} Wdx\nonumber\\
        &\quad+\Re\int_{\mathcal{O}}i(\nabla\bar{\phi}^{n+1}\phi^{n+1}+\nabla\bar{\phi}^{n}\phi^{n})\nabla\bar{\phi}^{n+1}(\phi^{n+1}-\phi^{n})\Delta_{n} Wdx.
      \end{align}
      For the first term, we use $\mathbb{H}^1\hookrightarrow \mathbb{L}^{\infty}$ and Young's inequality to conclude
      \begin{align*}
       & \Re\int_{\mathcal{O}}i(\nabla\bar{\phi}^{n+1}\phi^{n+1}+\nabla\bar{\phi}^{n}\phi^{n})(\nabla\bar{\phi}^{n+1}-\nabla\bar{\phi}^{n})\phi^{n}\Delta_{n} Wdx\\
      &  \leq\Big(\|\nabla\phi^{n+1}\|_{\mathbb{L}^2}\|\phi^{n+1}\|_{\mathbb{L}^{\infty}}+\|\nabla\phi^{n}\|_{\mathbb{L}^2}\|\phi^{n}\|_{\mathbb{L}^{\infty}}\Big)\|\nabla(\phi^{n+1}-\phi^{n})\|_{\mathbb{L}^2}\|\phi^{n}\|_{\mathbb{L}^{\infty}}\|\Delta_{n} W\|_{\mathbb{L}^{\infty}}\\
        &\leq \frac18 \|\nabla(\phi^{n+1}-\phi^{n})\|_{\mathbb{L}^2}^{2}+\frac{1}{\tau}\|\Delta_{n} W\|_{\mathbb{L}^{\infty}}^{4}+K\tau\|\phi^{n+1}\|_{\mathbb{H}^{1}}^{8}\|\phi^{n}\|_{\mathbb{H}^{1}}^{4}+K\tau\|\phi^{n}\|_{\mathbb{H}^1}^{12},
      \end{align*}
      Similarly, by embedding $\mathbb{H}^1\hookrightarrow \mathbb{L}^{\infty}$ and H\"older inequality, we get the estimation of the second term in \eqref{g2},
      \begin{align*}
       & \Re\int_{\mathcal{O}}i(\nabla\bar{\phi}^{n+1}\phi^{n+1}+\nabla\bar{\phi}^{n}\phi^{n})\nabla\bar{\phi}^{n+1}(\phi^{n+1}-\phi^{n})\Delta_{n} Wdx\\
       &\leq \Big(\|\nabla\phi^{n+1}\|_{\mathbb{L}^2}\|\phi^{n+1}\|_{\mathbb{L}^{\infty}}+\|\nabla\phi^{n}\|_{\mathbb{L}^2}\|\phi^{n}\|_{\mathbb{L}^{\infty}}\Big)\|\nabla\phi^{n+1}\|_{\mathbb{L}^2}
       \|\phi^{n+1}-\phi^{n+1}\|_{\mathbb{L}^{\infty}}\|\Delta_{n} W\|_{\mathbb{L}^{\infty}}\\
        &\leq  \|\phi^{n+1}-\phi^{n}\|_{\mathbb{H}^1}^{2}+\frac{1}{\tau}\|\Delta_{n} W\|_{\mathbb{L}^{\infty}}^{4}+K\tau\|\phi^{n+1}\|_{\mathbb{H}^{1}}^{12}+K\tau\|\phi^{n}\|_{\mathbb{H}^1}^{12}.
      \end{align*}
      Therefore, from Lemma \ref{alg2:H1} ${\rm(i)}$ and ${\rm(ii)}$, we have \[E(B^2)\leq K\tau+K\tau \Big( E\|\Delta\phi^{n+1}\|_{\mathbb{L}^2}^2+E\|\Delta\phi^{n}\|_{\mathbb{L}^2}^2\Big)+\frac18E\|\nabla(\phi^{n+1}-\phi^{n})\|_{\mathbb{L}^2}^2.\]

      {\em Step 4: Estimate of term A.}
      Because of $(|a|^2+|b|^2)(a+b)=2|a|^2a+2|b|^2b-(|b|^2-|a|^2)(b-a)$ for $a,b\in\mathbb{C}$, we split term $A$ further into
      \begin{align}\label{eq0}
        A=&\Re\int_{\mathcal{O}}\big(|\phi^{n+1}|^2+|\phi^{n}|^2\big)\phi^{n+\frac12}(\Delta\bar\phi^{n+1}-\Delta\bar\phi^{n})dx\nonumber\\
        =&-\frac12\Re\int_{\mathcal{O}}(|\phi^{n+1}|^2-|\phi^{n}|^2)(\phi^{n+1}-\phi^{n})(\Delta\bar\phi^{n+1}-\Delta\bar\phi^{n})dx\nonumber\\
        &+\Re\int_{\mathcal{O}}|\phi^{n}|^2\phi^{n}(\Delta\bar\phi^{n+1}-\Delta\bar\phi^{n})dx
        +\Re\int_{\mathcal{O}}|\phi^{n+1}|^2\phi^{n+1}(\Delta\bar\phi^{n+1}-\Delta\bar\phi^{n})dx\nonumber\\
        =:&A^{1}+A^2+A^3.
      \end{align}
     We use the identity $|a|^2a-|b|^2b=|a|^2(a-b)+|b|^2(a-b)+ab(\bar{a}-\bar{b})$ for $a,b\in\mathbb{C}$ to
   rewrite term $A^2$ as
   \begin{align}\label{eq1}
     A^2=&\Re\int_{\mathcal{O}}\Delta\bar\phi^{n+1}|\phi^{n+1}|^2\phi^{n+1}dx-\Re\int_{\mathcal{O}}\Delta\bar\phi^{n}|\phi^{n}|^2\phi^{n}dx
     -\Re\int_{\mathcal{O}}\Delta\bar\phi^{n+1}\big(|\phi^{n+1}|^2\phi^{n+1}-|\phi^{n}|^2\phi^{n}\big)dx\nonumber\\
     =&\Re\int_{\mathcal{O}}\Delta\bar\phi^{n+1}|\phi^{n+1}|^2\phi^{n+1}dx-\Re\int_{\mathcal{O}}\Delta\bar\phi^{n}|\phi^{n}|^2\phi^{n}dx
     -\Re\int_{\mathcal{O}}\Delta\bar\phi^{n+1}|\phi^{n+1}|^2(\phi^{n+1}-\phi^{n})dx\nonumber\\
     &-\Re\int_{\mathcal{O}}\Delta\bar\phi^{n+1}|\phi^{n}|^2(\phi^{n+1}-\phi^{n})dx
     +\Re\int_{\mathcal{O}}\Delta\bar\phi^{n+1}\phi^{n+1}|\phi^{n+1}-\phi^{n}|^2dx
     -\Re\int_{\mathcal{O}}\Delta\bar\phi^{n+1}(\phi^{n+1})^2(\bar\phi^{n+1}-\bar\phi^{n})dx,
   \end{align}
   where for the last two terms in \eqref{eq1}, we use
   \[\Re\int_{\mathcal{O}}\Delta\bar\phi^{n+1}\phi^{n+1}\phi^{n}(\bar\phi^{n+1}-\bar\phi^{n})dx
   =-\Re\int_{\mathcal{O}}\Delta\bar\phi^{n+1}\phi^{n+1}|\phi^{n+1}-\phi^{n}|^2dx+\Re\int_{\mathcal{O}}\Delta\bar\phi^{n+1}(\phi^{n+1})^2(\bar\phi^{n+1}-\bar\phi^{n})dx.\]
We use integration by parts and product formula to rewrite term $A^3$.
\begin{align}\label{eq2}
  A^3=&\Re\int_{\mathcal{O}}(\bar\phi^{n+1}-\bar\phi^{n})\Delta(|\phi^{n+1}|^2\phi^{n+1})dx\nonumber\\
  =&2\Re\int_{\mathcal{O}}(\bar\phi^{n+1}-\bar\phi^{n})\Delta\phi^{n+1}|\phi^{n+1}|^2dx
  +\Re\int_{\mathcal{O}}(\bar\phi^{n+1}-\bar\phi^{n})(\phi^{n+1})^2\Delta\bar\phi^{n+1}dx\nonumber\\
  &+2\Re\int_{\mathcal{O}}(\bar\phi^{n+1}-\bar\phi^{n})(\nabla\phi^{n+1})^2\bar\phi^{n+1}dx
  +4\Re\int_{\mathcal{O}}(\bar\phi^{n+1}-\bar\phi^{n})|\nabla\phi^{n+1}|^2\phi^{n+1}dx.
\end{align}
Summing up \eqref{eq1} and \eqref{eq2} and $\Re(a)=\Re(\bar{a})$ for $a\in\mathbb{C}$ lead to
\begin{align}\label{eq3}
  A^2+A^3=&\Re\int_{\mathcal{O}}\Delta\bar\phi^{n+1}|\phi^{n+1}|^2\phi^{n+1}dx-\Re\int_{\mathcal{O}}\Delta\bar\phi^{n}|\phi^{n}|^2\phi^{n}dx\nonumber\\
  &+\Re\int_{\mathcal{O}}\Delta\bar\phi^{n+1}\big(|\phi^{n+1}|^2-|\phi^{n}|^2\big)(\phi^{n+1}-\phi^{n})dx
  +\Re\int_{\mathcal{O}}\Delta\bar\phi^{n+1}\phi^{n+1}|\phi^{n+1}-\phi^{n}|^2dx\nonumber\\
  &+2\Re\int_{\mathcal{O}}(\bar\phi^{n+1}-\bar\phi^{n})(\nabla\phi^{n+1})^2\bar\phi^{n+1}dx
  +4\Re\int_{\mathcal{O}}(\bar\phi^{n+1}-\bar\phi^{n})|\nabla\phi^{n+1}|^2\phi^{n+1}dx.
\end{align}
      Plugging equation \eqref{eq3} into \eqref{eq0}, one has
      \begin{align}
        A=&A^1+\Re\int_{\mathcal{O}}\Delta\bar\phi^{n+1}|\phi^{n+1}|^2\phi^{n+1}dx-\Re\int_{\mathcal{O}}\Delta\bar\phi^{n}|\phi^{n}|^2\phi^{n}dx\nonumber\\
  &+\Re\int_{\mathcal{O}}\Delta\bar\phi^{n+\frac12}\big(|\phi^{n+1}|^2-|\phi^{n}|^2\big)(\phi^{n+1}-\phi^{n})dx
  +\Re\int_{\mathcal{O}}\Delta\bar\phi^{n+1}\phi^{n+1}|\phi^{n+1}-\phi^{n}|^2dx\nonumber\\
  &+2\Re\int_{\mathcal{O}}(\bar\phi^{n+1}-\bar\phi^{n})(\nabla\phi^{n+1})^2\bar\phi^{n+1}dx
  +4\Re\int_{\mathcal{O}}(\bar\phi^{n+1}-\bar\phi^{n})|\nabla\phi^{n+1}|^2\phi^{n+1}dx\nonumber\\
  =:&A^1+A_{a,n+1}+A_{a,n}+A_{b}+A_{c}+A_{d}+A_{e}.
      \end{align}
      We estimate the terms separately.
       The estimation of the terms $A_{a,n+1}$ and $A_{a,n}$ follows from their special structure (when taking the sum with respect to $n$, all middle term are canceled) and Lemma \ref{alg2:H1}.
      For term $A_{b}$, we use binomial formula, and interpolation of $\mathbb{L}^4$ between $\mathbb{L}^2$ and $\mathbb{H}^1$ for $d=1$.
       \begin{align*}
      A_{b}&=\Re\int_{\mathcal{O}}\Delta\bar\phi^{n+\frac12}(|\phi^{n+1}|^2-|\phi^{n}|^2)(\phi^{n+1}-\phi^{n})dx\\
      &\leq 2\|\phi^{n+1}-\phi^{n}\|_{\mathbb{L}^4}^2\|\Delta\phi^{n+\frac12}\|_{\mathbb{L}^2}\|\phi^{n+\frac12}\|_{\mathbb{L}^{\infty}}\\
      &\leq \tau\Big(\|\Delta\phi^{n+1}\|_{\mathbb{L}^2}^2+\|\Delta\phi^{n}\|_{\mathbb{L}^2}^2\Big)+\frac{1}{\tau}\|\phi^{n+1}-\phi^{n}\|_{\mathbb{L}^4}^4\|\phi^{n+\frac12}\|_{\mathbb{L}^{\infty}}^2\\
      &\leq  \tau\Big(\|\Delta\phi^{n+1}\|_{\mathbb{L}^2}^2+\|\Delta\phi^{n}\|_{\mathbb{L}^2}^2\Big)+\frac{K}{\tau}\|\nabla(\phi^{n+1}-\phi^{n})\|_{\mathbb{L}^2}\|\phi^{n+1}-\phi^{n}\|_{\mathbb{L}^2}^3\|\phi^{n+\frac12}\|_{\mathbb{L}^{\infty}}^2\\
      &\leq \tau\Big(\|\Delta\phi^{n+1}\|_{\mathbb{L}^2}^2+\|\Delta\phi^{n}\|_{\mathbb{L}^2}^2\Big)+\frac{1}{8}\|\nabla(\phi^{n+1}-\phi^{n})\|_{\mathbb{L}^2}^2+K\tau\|\phi^{n+\frac12}\|_{\mathbb{L}^{\infty}}^8
      +\frac{1}{\tau^5}\|\phi^{n+1}-\phi^{n}\|_{\mathbb{L}^2}^{12}.
    \end{align*}
For term $A^1$, we use $|\phi^{n+1}|^2-|\phi^{n}|^2=2\Re\Big(\phi^{n+\frac12}(\phi^{n+1}-\phi^{n})\Big)$ and $\|\Delta(\phi^{n+1}-\Delta^{n})\|_{\mathbb{L}^2}\leq \|\Delta\phi^{n+1}\|_{\mathbb{L}^2}+\|\Delta\phi^{n}\|_{\mathbb{L}^2}$ to have
\begin{align*}
  A^1\leq& K\big(\|\Delta\phi^{n+1}\|_{\mathbb{L}^2}+\|\Delta \phi^{n}\|_{\mathbb{L}^2}\big)\|\phi^{n+1}-\phi^{n+1}\|_{\mathbb{L}^4}^2\|\phi^{n+\frac12}\|_{\mathbb{L}^{\infty}}\\
  \leq& K\tau\big(\|\Delta\phi^{n+1}\|_{\mathbb{L}^2}^2+\|\Delta \phi^{n}\|_{\mathbb{L}^2}^2\big)+K\frac{1}{\tau}\|\phi^{n+1}-\phi^{n+1}\|_{\mathbb{L}^4}^4\|\phi^{n+\frac12}\|_{\mathbb{L}^{\infty}}^2.
\end{align*}
Now follow the steps for $A_b$ to estimate the right-hand side.
      In order to bound the term $A_{c}$, we use once more the interpolation result for $\mathbb{L}^4$ which holds for $d=1$.
      \begin{align*}
        A_{c}&=\Re\int_{\mathcal{O}}\Delta\bar{\phi}^{n+1}\phi^{n+1}|\phi^{n+1}-\phi^{n}|^{2}dx\\
            &\leq \|\Delta\phi^{n+1}\|_{\mathbb{L}^2}\|\phi^{n+1}\|_{\mathbb{L}^{\infty}}\|\phi^{n+1}-\phi^{n}\|_{\mathbb{L}^4}^2\\
            &\leq K\tau\|\Delta\phi^{n+1}\|_{\mathbb{L}^2}^2+K\frac{1}{\tau}\|\phi^{n+1}\|_{\mathbb{L}^{\infty}}^2\|\nabla\phi^{n+1}-\nabla\phi^{n}\|_{\mathbb{L}^2}
            \|\phi^{n+1}-\phi^{n}\|_{\mathbb{L}^2}^3\\
            &\leq K\tau\|\Delta\phi^{n+1}\|_{\mathbb{L}^2}^2+\frac18\|\nabla(\phi^{n+1}-\phi^{n})\|_{\mathbb{L}^2}^2+K\tau\|\phi^{n+1}\|_{\mathbb{L}^{\infty}}^8+
            K\frac{1}{\tau^5}\|\phi^{n+1}-\phi^{n}\|_{\mathbb{L}^2}^{12}.
      \end{align*}
      For the last two terms $A_{d}+A_{e}$, we replace the expression  $\bar{\phi}^{n+1}-\bar{\phi}^{n}=-i\tau\big(\theta\Delta\bar{\phi}^{n+1}+(1-\theta)\Delta\bar{\phi}^{n}\big)+\frac{i}{2}\tau(|\phi^{n+1}|^{2}+|\phi^{n}|^{2})\bar{\phi}^{n+\frac12}
      +i\bar{\phi}^{n+\frac12}\Delta_{n} W$, then for the second term and third terms of the resulting equality, we can estimate them as before.

      Here by the interpolation of $\mathbb{L}^4$ between $\mathbb{H}^1$ and $\mathbb{L}^2$, and the continuous embedding $\mathbb{H}^1\hookrightarrow \mathbb{L}^{\infty}$, we estimate the first term of resulting equality after replacing $\bar{\phi}^{n+1}-\bar{\phi}^{n}$ into $A_d$,
      \begin{align*}
      &2\tau\Re\int_{\mathcal{O}}(-i)(\nabla\phi^{n+1})^{2}\bar{\phi}^{n+1}
     \big(\theta\Delta\bar{\phi}^{n+1}+(1-\theta)\Delta\bar{\phi}^{n}\big)dx\\
    &\quad+ 4\tau \Re\int_{\mathcal{O}}(-i)\phi^{n+1}|\nabla\phi^{n+1}|^2\big(\theta\Delta\bar{\phi}^{n+1}
 +(1-\theta)\Delta\bar{\phi}^{n}\big)dx\\
     &\leq 6\tau\Big(\|\Delta\phi^{n+1}\|_{\mathbb{L}^2}+\|\Delta\phi^{n}\|_{\mathbb{L}^2}\Big)\|\nabla\phi^{n+1}\|_{\mathbb{L}^4}^2\|\phi^{n+1}\|_{\mathbb{L}^{\infty}}\\
     &\leq K\tau\Big(\|\Delta\phi^{n+1}\|_{\mathbb{L}^2}^2+\|\Delta\phi^{n}\|_{\mathbb{L}^2}^2\Big)+K\tau\|\nabla\phi^{n+1}\|_{\mathbb{L}^2}^5\|\Delta\phi^{n+1}\|_{\mathbb{L}^2}\\
     &\leq K\tau\Big(\|\Delta\phi^{n+1}\|_{\mathbb{L}^2}^2+\|\Delta\phi^{n}\|_{\mathbb{L}^2}^2\Big)+K\tau\|\nabla\phi^{n+1}\|_{\mathbb{L}^2}^{10}.
     \end{align*}
     As a consequence, all terms on the right-hand side of \eqref{eq7} may be controlled with the help of Lemma \ref{alg2:H1} and a Gronwall's argument, apart from the term $\|\nabla(\phi^{n+1}-\phi^{n})\|_{\mathbb{L}^2}$.

     {\em Step 5: Estimate of the term $\|\nabla(\phi^{n+1}-\phi^{n})\|_{\mathbb{L}^2}$.}
We formally test equation \eqref{algorithm 2} with $-\Delta(\bar\phi^{n+1}-\bar\phi^{n})$ and take the imaginary part. We repeatedly use properties of the imaginary part of a complex number to obtain
     \begin{align*}
      \|\nabla(\phi^{n+1}-\phi^{n})\|_{\mathbb{L}^2}^2=&\tau\Im\int_{\mathcal{O}}\Delta\phi^{n+1}\Delta\bar\phi^{n}dx-\frac{\tau}{2}\Im
      \int_{\mathcal{O}}\big(|\phi^{n+1}|^2+|\phi^{n}|^2\big)\phi^{n+\frac12}(\Delta\bar\phi^{n+1}-\Delta\bar\phi^{n})dx\\
      &+\Im\int_{\mathcal{O}}\nabla(\phi^{n}\Delta_{n} W)(\nabla\bar\phi^{n+1}-\nabla\bar\phi^{n})dx
      -\frac12\Im\int_{\mathcal{O}}(\phi^{n+1}-\phi^{n})\Delta_{n} W(\Delta\bar\phi^{n+1}-\Delta\bar\phi^{n})dx\\
      \leq & K\tau\Big(\|\Delta\phi^{n+1}\|_{\mathbb{L}^2}^2+\|\Delta\phi^{n}\|_{\mathbb{L}^2}^2\Big)+\frac{\tau}{16}\|\Delta(\phi^{n+1}-\phi^{n})\|_{\mathbb{L}^2}^2+K\tau\Big(\|\phi^{n+1}\|_{\mathbb{L}^6}^6+\|\phi^{n}\|_{\mathbb{L}^6}^6\Big)\\
&      +\frac12\|\nabla(\phi^{n+1}-\phi^{n})\|_{\mathbb{L}^2}^2+K\|\nabla(\phi^{n}\Delta_{n} W)\|_{\mathbb{L}^2}^2
      +\frac{2\theta-1}{16}\|\Delta(\phi^{n+1}-\phi^{n})\|_{\mathbb{L}^2}^2\\
      &+\frac{1}{2\theta-1}\|\phi^{n+1}-\phi^{n}\|_{\mathbb{L}^2}^4+\frac{1}{2\theta-1}\|\Delta_{n} W\|_{\mathbb{L}^{\infty}}^4.
     \end{align*}
      By the continuous embedding $\mathbb{H}^1\hookrightarrow \mathbb{L}^6$, and Lemma \ref{alg2:H1}  (${\rm i}$), the term $\|\phi^{n+1}\|_{\mathbb{L}^6}^6+\|\phi^{n}\|_{\mathbb{L}^6}^6$ can be bounded. Other terms can be bounded by assertions (${\rm i}$) and (${\rm ii}$) of Lemma  \ref{alg2:H1}.
     Therefore
     \begin{align*}
      E \|\nabla(\phi^{n+1}-\phi^{n})\|_{\mathbb{L}^2}^2\leq& K\Big(\tau+\frac{\tau^2}{2\theta-1}\Big)+K\tau \Big(E\|\Delta\phi^{n+1}\|_{\mathbb{L}^2}^2+ E\|\Delta\phi^{n}\|_{\mathbb{L}^2}^2\Big)\\
      &+\Big(\frac{2\theta-1}{16}+\frac{\tau}{16}\Big)E\|\Delta(\phi^{n+1}-\phi^{n})\|_{\mathbb{L}^2}^2.
     \end{align*}
     {\em Step 6: Gronwall argument.}
We may combine these estimates for the terms on the right-hand side of \eqref{eq7}. For $\tau\leq \tau^{*}$ sufficiently small, we prove the assertion ${\rm (i)}$ to benefit from Gronwall's inequality and Lemma \ref{alg2:H1}.

      The proof of assertion ${\rm(ii)}$ is similar to Lemma \ref{alg2:H1} ${\rm(i)}$. Property {\rm(ii)} then allow to validate assertion ${\rm(iii)}$.The proof of assertion ${\rm(iv)}$ is similar to Lemma \ref{H1 E max}.
      \end{proof}

\begin{remark}\label{re2}
To derive uniform bounds in higher norms for iterates of Algorithm 1 is a bit more complicated than for the continuous problem (Lemma \ref{spatial H2 new}). Terms $A^1$, $A^{b}$ to $A^{e}$ can only be estimated in $1D$.
%
%
\end{remark}

Since we get a better estimate for $\|\nabla(\bar{\phi}^{n+1}-\bar{\phi}^{n})\|_{\mathbb{L}^2}$ in Lemma \ref{alg2:H2}, we can get a better conservation of the $\mathbb{L}^2$-norm for domains $\mathcal{O}\subset\mathbb{R}^{1}$; in fact, the next lemma asserts that the conservation of the $\mathbb{L}^2$-norm is of
order $\frac{1}{2}$ for {$2\theta-1=c\sqrt{\tau}$ with $c\geq c^{*}>0$}.
 \begin{lemma}\label{alg2:L2}
 Let $\mathcal{O}\subset\mathbb{R}^1$,  $T \equiv t_{M}>0$ be fixed, and {$\theta\in[\frac12+c\sqrt{\tau},1]$ with $c\geq c^{*}>0$}. There exist a constant {$K\equiv K(T,c^{*})>0$} and $\tau^* \equiv \tau^{*}(
   \Vert \phi^0 \Vert_{ \mathbb{H}_0^1\cap\mathbb{H}^2},T)$ such that for all $\tau\leq \tau^{*}$ holds
    \begin{equation}
      \max_{1\leq n\leq M}E\|\phi^{n}\|_{\mathbb{L}^2}^{2}-E\|\phi^{0}\|_{\mathbb{L}^2}^{2}\leq K(2\theta-1)\tau^{\frac12}.
    \end{equation}
  \end{lemma}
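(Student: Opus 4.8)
The plan is to revisit the exact mass balance \eqref{E1} and to replace the Young-type splitting used in Lemma~\ref{alg2:L2'} by a two-fold Cauchy--Schwarz estimate that can absorb the \emph{sharp} one-step increment bound which is now available for $\mathcal{O}\subset\mathbb{R}^1$. Recall from \eqref{E1} that
\[
\|\phi^{n+1}\|_{\mathbb{L}^2}^2-\|\phi^{n}\|_{\mathbb{L}^2}^2=(1-2\theta)\tau\,\Im\int_{\mathcal{O}}(\nabla\bar\phi^{n+1}-\nabla\bar\phi^{n})\nabla\phi^{n}\,dx .
\]
First I would sum this identity over $\ell=0,\dots,n-1$ and take expectations, so that
\[
E\|\phi^{n}\|_{\mathbb{L}^2}^2-E\|\phi^{0}\|_{\mathbb{L}^2}^2=(1-2\theta)\tau\sum_{\ell=0}^{n-1}E\Big[\Im\int_{\mathcal{O}}(\nabla\bar\phi^{\ell+1}-\nabla\bar\phi^{\ell})\nabla\phi^{\ell}\,dx\Big],
\]
and then pass to absolute values, bounding the right-hand side by $(2\theta-1)\tau\sum_{\ell=0}^{n-1}\big|E[\,\cdot\,]\big|$.

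Second, I would estimate each summand. By the Cauchy--Schwarz inequality in space the modulus of the inner integral is at most $\|\nabla(\phi^{\ell+1}-\phi^{\ell})\|_{\mathbb{L}^2}\|\nabla\phi^{\ell}\|_{\mathbb{L}^2}$, and a further Cauchy--Schwarz inequality in $\Omega$ gives
\[
E\Big[\|\nabla(\phi^{\ell+1}-\phi^{\ell})\|_{\mathbb{L}^2}\|\nabla\phi^{\ell}\|_{\mathbb{L}^2}\Big]\leq\Big(E\|\nabla(\phi^{\ell+1}-\phi^{\ell})\|_{\mathbb{L}^2}^2\Big)^{\frac12}\Big(E\|\nabla\phi^{\ell}\|_{\mathbb{L}^2}^2\Big)^{\frac12}.
\]
This is exactly where the one-dimensional restriction enters: Lemma~\ref{alg2:H2}~(iii) (with $p=1$) yields $E\|\phi^{\ell+1}-\phi^{\ell}\|_{\mathbb{H}^1}^{2}\leq K\tau$, hence $E\|\nabla(\phi^{\ell+1}-\phi^{\ell})\|_{\mathbb{L}^2}^2\leq K\tau$, while $E\|\nabla\phi^{\ell}\|_{\mathbb{L}^2}^2\leq 2E\mathcal{H}(\phi^{\ell})\leq K$ by Lemma~\ref{alg2:H1}~(i) (using $\lambda=-1$). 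Thus each summand is bounded by $K\tau^{\frac12}$.

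Finally, I would collect the estimates: each of the increments contributes $(2\theta-1)\tau\cdot K\tau^{\frac12}=K(2\theta-1)\tau^{\frac32}$, and since $n\leq M=T/\tau$ the sum of at most $T/\tau$ such terms is $KT(2\theta-1)\tau^{\frac12}$; taking the maximum over $1\leq n\leq M$ gives the claim. The decisive ingredient — and the only genuine obstacle — is the sharp one-step bound $E\|\phi^{n+1}-\phi^{n}\|_{\mathbb{H}^1}^{2}=O(\tau)$ from Lemma~\ref{alg2:H2}, which fails to hold uniformly in $n$ in higher dimensions (cf.~Remark~\ref{re2}); everything else is a routine double Cauchy--Schwarz and a summation of $T/\tau$ increments. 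The contrast with Lemma~\ref{alg2:L2'}, which only has the weaker \emph{summed} bound Lemma~\ref{alg2:H1}~(iii) available and therefore must trade a factor of $\tau^{3/4}$, explains why the present one-dimensional argument sharpens the conservation estimate.
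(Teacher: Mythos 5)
Your proposal is correct and follows essentially the same route as the paper: both start from the exact mass balance \eqref{E1} and combine the one-dimensional gradient-increment control of Lemma~\ref{alg2:H2} with the uniform bound of Lemma~\ref{alg2:H1}~{\rm(i)}, summing over the $O(T/\tau)$ steps to extract the $(2\theta-1)\tau^{\frac12}$ rate. The only (immaterial) difference is that you split the product via a double Cauchy--Schwarz inequality together with the per-step bound of Lemma~\ref{alg2:H2}~{\rm(iii)}, whereas the paper applies a weighted Young inequality pathwise and then invokes the summed increment bound of Lemma~\ref{alg2:H2}~{\rm(i)}; both give the same conclusion.
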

  \begin{proof}
   Recall \eqref{E1}, but now scale factors differently.
    \begin{align*}
      \|\phi^{n+1}\|_{\mathbb{L}^2}^{2}-\|\phi^{n}\|_{\mathbb{L}^2}^{2}
      &=(1-2\theta)\tau\Im\int_{\mathcal{O}}(\nabla\bar{\phi}^{n+1}-\nabla\bar{\phi}^{n})\nabla\phi^{n}dx\\
      &\leq (2\theta-1)\tau^{\frac12}\Big(\|\nabla\phi^{n+1}-\nabla\phi^{n}\|_{\mathbb{L}^2})(\tau^{\frac12}\|\nabla\phi^{n}\|_{\mathbb{L}^2}\Big)\\
      &\leq \frac{(2\theta-1) \tau^{\frac12}}{2}\|\nabla\bar{\phi}^{n+1}-\nabla\bar{\phi}^{n}\|_{\mathbb{L}^2}^{2}+\frac{(2\theta-1)\tau^{\frac32}}{2}\|\nabla\bar\phi^{n}\|_{\mathbb{L}^2}^{2}.
    \end{align*}
    Now consider the above inequality for some $0\leq \ell\leq M$, sum over the index from $\ell=0$ to $n$, take the expectation, and use Lemma \ref{alg2:H1} ${\rm(i)}$ and Lemma \ref{alg2:H2} to establish the assertion.
  \end{proof}

\section{Rates of convergence for the $\theta$-scheme}\label{sec4}
Let $e^{n}:=\psi(t_n)-\phi^{n}$, where $\psi$ solves \eqref{variational} and $\{\phi^{n}\}$ solves Algorithm \ref{alg_theta}.
  The error equation then reads for all $n\geq 0$,
  \begin{align}\label{error equation 1}
    &i\int_{\mathcal{O}}(e^{n+1}-e^{n})zdx-\int_{t_{n}}^{t_{n+1}}\int_{\mathcal{O}}\big(\nabla\psi(s)-\theta\nabla\phi^{n+1}-(1-\theta)\nabla\phi^{n}\big)\nabla zdxds\\
   & -\int_{t_{n}}^{t_{n+1}}\int_{\mathcal{O}}(|\psi(s)|^{2}\psi(s)-\frac12(|\phi^{n+1}|^{2}+|\phi^{n}|^{2})\phi^{n+\frac12})zdxds
   =\int_{t_{n}}^{t_{n+1}}\int_{\mathcal{O}}(\psi(s)-\phi^{n})zdxdW(s)\nonumber\\
   &-\frac{i}{2}\int_{t_{n}}^{t_{n+1}}\int_{\mathcal{O}}\psi(s)F_{\bf Q}
   zdxds-\frac12\int_{\mathcal{O}}(\phi^{n+1}-\phi^{n})z\Delta W_{n}dx\qquad \forall z\in {\mathbb H}_0^1.\nonumber
  \end{align}

  The following theorem states strong rates of convergence  for  the $\theta$-scheme for initial data $\psi_0\in L^{8}(\Omega;{\mathbb H}_0^1\cap{\mathbb H}^2)$,
 $\mathcal{O}\subset \mathbb{R}^1$ and {$\theta\in[\frac12+c\sqrt{\tau},1]$ with $c\geq c^{*}>0$}. Since its proof requires
properties which are stated in Lemma~\ref{alg2:H2}, we again consider ${\mathbb H}_0^2\cap{\mathbb H}^3$-valued driving Wiener processes.
\begin{theorem}\label{thm1}
Consider $\mathcal{O}\subset\mathbb{R}^{1}$, $T\equiv t_{M}>0$ and {$\theta\in[\frac12+c\sqrt{\tau},1]$ with $c\geq c^{*}>0$}. Let $\{\psi(t);\;0\leq t\leq T\}$ be the  solution of equation \eqref{sdd1} with $\lambda=-1$,
$\psi_0\in L^{8}(\Omega;{\mathbb H}_0^1\cap{\mathbb H}^2)$, and driving $ {\mathbb H}_0^2\cap{\mathbb H}^3$-valued Wiener process $W$. Let $\{\phi^{n};\, 0\leq n\leq M\}$ solve \eqref{algorithm 2}.
 Then there exist a constant {$K\equiv K(T,c^{*})>0$} and $\tau^* \equiv \tau^*(\Vert \psi_0\Vert_{L^8(\Omega;{\mathbb H}_0^1\cap{\mathbb H}^2)},T) >0$ such that for every $0<\tau\leq \tau^{*}$, we have
\begin{equation*}
   E\Big(\bm{1}_{\tilde{\Omega}_{\kappa}}\max_{0\leq n\leq M}\|e^n\|_{\mathbb{L}^2}^2\Big)
   \leq Ke^{K\kappa}\tau
 \end{equation*}
  for any fixed $\kappa >0$, and
  \[\tilde{\Omega}_{\kappa}:=\tilde{\Omega}_{\kappa,M}=\Big\{\omega\in\Omega\Big|\Big(\sup_{0\leq t
  \leq t_{M}}\|\psi(t)\|_{\mathbb{H}^1}^2+\max_{0\leq l\leq M}\|\phi^{l}\|_{\mathbb{H}^{1}}^{2}\Big)\leq \kappa\Big\}.\]
\end{theorem}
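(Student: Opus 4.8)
The plan is to test the error equation \eqref{error equation 1} with $z=\bar e^{n+\frac12}:=\frac12(\bar e^{n+1}+\bar e^n)$ and take the imaginary part, so that $\Im\bigl[i\int_{\mathcal O}(e^{n+1}-e^n)\bar e^{n+\frac12}\,dx\bigr]=\frac12\bigl(\|e^{n+1}\|_{\mathbb L^2}^2-\|e^n\|_{\mathbb L^2}^2\bigr)$ and the left-hand side telescopes. I would group the remaining contributions into a \emph{dispersive block} from $\int_{t_n}^{t_{n+1}}\nabla\psi(s)\,ds-\tau(\theta\nabla\phi^{n+1}+(1-\theta)\nabla\phi^n)$, a \emph{nonlinear block} from $\int_{t_n}^{t_{n+1}}|\psi|^2\psi\,ds-\frac\tau2(|\phi^{n+1}|^2+|\phi^n|^2)\phi^{n+\frac12}$, and a \emph{stochastic block} gathering the two noise terms together with the It\^o correction $-\frac i2\int_{t_n}^{t_{n+1}}\psi F_{\mathbf Q}\,ds$. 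The aim is to sum over $n$, multiply by $\bm 1_{\tilde\Omega_\kappa}$, take the maximum over the time level and then the expectation, and close the estimate by the discrete Gronwall lemma. An auxiliary bound for $E\|e^{n+1}-e^n\|_{\mathbb L^2}^2$, obtained by testing \eqref{error equation 1} with $\bar e^{n+1}-\bar e^n$ in the spirit of \eqref{alg diffe L2}, will be used repeatedly to absorb error increments.

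For the dispersive block I would write $\theta\nabla\phi^{n+1}+(1-\theta)\nabla\phi^n=\nabla\phi^{n+\frac12}+(\theta-\frac12)\nabla(\phi^{n+1}-\phi^n)$ and $\tau\nabla\phi^{n+\frac12}=\tau\nabla\psi^{n+\frac12}-\tau\nabla e^{n+\frac12}$, so that the skew-symmetric leading part reduces to $\tau\,\Im\int_{\mathcal O}\nabla e^{n+\frac12}\nabla\bar e^{n+\frac12}\,dx=0$. Two residuals survive. The first is a bias term $\tau(\theta-\frac12)\Im\int\nabla(\phi^{n+1}-\phi^n)\nabla\bar e^{n+\frac12}$, which I would control by pairing the $\mathbb H^1$-increment estimate of Lemma \ref{alg2:H2} (of order $\tau^{1/2}$) against $\|\nabla e^{n+\frac12}\|_{\mathbb L^2}\le C\kappa^{1/2}$ on $\tilde\Omega_\kappa$; since $\theta-\frac12$ quantifies the numerical dissipation, this is where the choice $\theta=\frac12+c\sqrt\tau$ becomes optimal, consistently with Lemma \ref{alg2:L2}. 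The second is a purely analytic consistency term $-\Im\int\bigl(\int_{t_n}^{t_{n+1}}\nabla(\psi(s)-\psi^{n+\frac12})\,ds\bigr)\nabla\bar e^{n+\frac12}$, which is the principal obstacle: a termwise estimate loses the factor $\tau^{-1}$ coming from the summation over the $O(\tau^{-1})$ steps. I would therefore integrate by parts in space and invoke \eqref{S_NLSE_Ito} to trade $\int_{t_n}^{t_{n+1}}\Delta\psi(s)\,ds$ for the increment $\psi(t_{n+1})-\psi(t_n)$ plus drift and stochastic integrals, which lowers the regularity demand to the available $\mathbb H^2$-bounds (Lemmas \ref{spatial H2 new}, \ref{spatial E max}) and $\mathbb H^1$-H\"older continuity (Lemmas \ref{temporal L2}, \ref{temporal H1 new}); splitting $\bar e^{n+\frac12}=\bar e^n+\frac12(\bar e^{n+1}-\bar e^n)$, the increment coupling is absorbed by the auxiliary estimate while the adapted remainder is arranged as a discrete martingale in $n$ whose quadratic variation, handled by Burkholder--Davis--Gundy, recovers the missing power of $\tau$.

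For the nonlinear block I would expand the cubic difference as a telescoping sum of factors $e^{n+1},e^n$ times products of $\psi$ and $\phi$, and use the embedding $\mathbb H^1\hookrightarrow\mathbb L^\infty$ in $d=1$ with the restriction to $\tilde\Omega_\kappa$ to bound the pointwise factors by $C\kappa$; this yields terms of the form $K\kappa\,\tau(\|e^{n+1}\|_{\mathbb L^2}^2+\|e^n\|_{\mathbb L^2}^2)$, which feed the Gronwall argument and produce the factor $e^{K\kappa}$, together with a temporal consistency remainder controlled in $\mathbb L^2$ by Lemma \ref{temporal L2} and Corollary \ref{spatial H1}. For the stochastic block I note that the two noise terms represent $\int_{t_n}^{t_{n+1}}\int\psi\,z\,dW-\int\phi^{n+\frac12}z\,\Delta_nW$; after the splittings $\bar e^{n+\frac12}=\bar e^n+\frac12(\bar e^{n+1}-\bar e^n)$ and $\psi(s)-\phi^n=(\psi(s)-\psi(t_n))+e^n$, the adapted mean-zero pieces sum to martingales amenable to Burkholder--Davis--Gundy, while the increment couplings involving $\phi^{n+1}-\phi^n$ and $e^{n+1}-e^n$ paired with $\Delta_nW$ are bounded by Young's inequality and the auxiliary increment estimate; crucially, the $F_{\mathbf Q}$ term supplies exactly the It\^o correction matching the midpoint noise discretization, so the residual noise consistency is of order $\tau$ after squaring.

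The remaining technical point, and the core difficulty to be handled with care, is that $\bm 1_{\tilde\Omega_\kappa}$ is only $\mathcal F_{t_M}$-measurable, which obstructs a direct use of Burkholder--Davis--Gundy for the martingale sums above. I would resolve this by introducing the nested family $\tilde\Omega_{\kappa,n}:=\bigl\{\sup_{0\le t\le t_n}\|\psi(t)\|_{\mathbb H^1}^2+\max_{0\le l\le n}\|\phi^l\|_{\mathbb H^1}^2\le\kappa\bigr\}$, which is decreasing in $n$, $\mathcal F_{t_n}$-measurable, and satisfies $\bm 1_{\tilde\Omega_\kappa}\le\bm 1_{\tilde\Omega_{\kappa,n}}$ for every $n\le M$; inserting $\bm 1_{\tilde\Omega_{\kappa,n}}$ into each adapted summand preserves both the martingale structure and the $\kappa$-dependent pointwise bounds, after which the monotonicity lets me pass back to $\bm 1_{\tilde\Omega_\kappa}$ under the expectation. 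Collecting all contributions and invoking the discrete Gronwall lemma over the fixed horizon $T$ then yields $E\bigl[\bm 1_{\tilde\Omega_\kappa}\max_{0\le n\le M}\|e^n\|_{\mathbb L^2}^2\bigr]\le Ke^{K\kappa}\tau$.
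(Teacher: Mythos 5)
Your proposal reproduces the skeleton of the paper's argument: the same error equation \eqref{error equation 1}, localization by the nested $\mathcal{F}_{t_n}$-measurable sets $\tilde{\Omega}_{\kappa,n}$, the same stability and H\"older-continuity lemmas (Lemmas \ref{spatial H2 new}, \ref{temporal L2}, \ref{temporal H1 new}, \ref{alg2:H1}, \ref{alg2:H2}), Burkholder--Davis--Gundy for the adapted mean-zero sums (including the $(\Delta_n W)^2-F_{\mathbf Q}\tau$ correction), Abel summation with the monotonicity $\tilde{\Omega}_{\kappa,\ell}\supset\tilde{\Omega}_{\kappa,\ell+1}$, and discrete Gronwall with constant $K\tau(1+\kappa)$. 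Your two real deviations are the test function and the increment handling: the paper tests with $\bar{e}^{n+1}$, so that $\Re\bigl[(a-b)\bar a\bigr]=\frac12(|a|^2-|b|^2+|a-b|^2)$ puts $\frac12\|e^{n+1}-e^n\|_{\mathbb{L}^2}^2$ on the left-hand side, which is then used to absorb every increment coupling ($II^3$, $IV^1$, $IV^2$, $V_a^5$), whereas you test with $\bar e^{n+1/2}$ (killing the leading dispersive term by skew-symmetry) and propose a separate auxiliary estimate for $E\|e^{n+1}-e^n\|_{\mathbb{L}^2}^2$. That trade-off is viable in principle: the auxiliary estimate closes (it yields $K\tau^2+K\tau E[\bm{1}\|e^n\|_{\mathbb{L}^2}^2]$ per step), and your bias term $\tau(\theta-\frac12)\Im\int\nabla(\phi^{n+1}-\phi^n)\nabla\bar e^{n+1/2}\,dx$ is $O(\tau^2)$ per step by Lemma \ref{alg2:H2}~(iii) and Cauchy--Schwarz in $\Omega$ (no $\kappa$ is even needed there).

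The genuine gap is in the nonlinear block, where your plan contradicts the measurability constraint you yourself impose in the last paragraph. You claim the pointwise factors can be bounded by $C\kappa$ and that "inserting $\bm 1_{\tilde\Omega_{\kappa,n}}$ into each adapted summand preserves \dots the $\kappa$-dependent pointwise bounds". But $\bm 1_{\tilde\Omega_{\kappa,n}}$ controls only $\|\psi(t)\|_{\mathbb{H}^1}$ for $t\le t_n$ and $\|\phi^l\|_{\mathbb{H}^1}$ for $l\le n$, while your midpoint decomposition unavoidably produces \emph{error-quadratic} terms with coefficients at level $n+1$: after subtracting the temporal-consistency part, the cubic difference equals $\frac12(|\phi^{n+1}|^2+|\phi^n|^2)e^{n+1/2}$ (whose pairing with $\bar e^{n+1/2}$ indeed vanishes under $\Im$) plus $\frac12\bigl[(|\psi(t_{n+1})|^2-|\phi^{n+1}|^2)+(|\psi(t_n)|^2-|\phi^n|^2)\bigr]\psi^{n+1/2}$, and the latter, paired with $\bar e^{n+1/2}$, gives products of two error factors multiplied by $\psi(t_{n+1})$, $\psi^{n+1/2}$, $\phi^{n+1}$ in $\mathbb{L}^\infty$ --- none of which is controlled on $\tilde\Omega_{\kappa,n}$. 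The obvious fixes fail or degrade the rate: using $\bm 1_{\tilde\Omega_{\kappa,n+1}}$ destroys the martingale property needed for BDG, and crude Cauchy--Schwarz against $E\|\cdot\|_{\mathbb{H}^1}^2$ moments yields only $O(\tau^{1/2})$ after summing the $O(\tau^{-1})$ steps. This is precisely why the paper's decomposition looks the way it does: the identity $|a|^2a-|b|^2b=|a|^2(a-b)+|b|^2(a-b)+ab(\bar a-\bar b)$ is applied with $a=\psi(t_n)$, $b=\phi^n$ (term $III^4$), so every error-quadratic term carries only level-$n$ coefficients (hence $\le\kappa$ under the admissible indicator), while all level-$(n+1)$ quantities occur in \emph{error-linear} or consistency terms which Young's inequality converts into $K\tau\|e^{n+1}\|_{\mathbb{L}^2}^2$ plus quantities that are $O(\tau^2)$ per step in expectation by the stability lemmas, with no indicator at all. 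Your route can be repaired --- split $e^{n+1}=e^n+(e^{n+1}-e^n)$ and $\psi(t_{n+1})=\psi(t_n)+(\psi(t_{n+1})-\psi(t_n))$, use $\bm 1_{\tilde\Omega_{\kappa,n}}\|e^n\|_{\mathbb{L}^2}\le 2\kappa^{1/2}$ on exactly one error factor, then Cauchy--Schwarz and Young to reach $K\kappa^2\tau^2+K\tau E[\bm 1\|e^n\|_{\mathbb{L}^2}^2]$ per step --- but this reorganization is the substantive core of the proof, and it is absent from the proposal.
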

\medskip

{Let  $\kappa=K^{-1}\log(\tau^{-\varepsilon})$ for some $\varepsilon >0$.} We may employ stability properties of both $\psi$ and $\{\phi^{n}\}$ to conclude
 \begin{equation}\label{Y}
 \lim_{\tau\rightarrow 0}P(\tilde{\Omega}_{\kappa})=1.
 \end{equation}
Then Theorem \ref{thm1} amounts to
 \begin{equation*}
   E\Big(\bm{1}_{\tilde{\Omega}_{\kappa}}\max_{0\leq n\leq M}\|e^n\|_{\mathbb{L}^2}^2\Big)
   \leq K\tau^{1-\varepsilon}.
 \end{equation*}
 For the subset $\tilde{\Omega}_{\kappa}$, by Corollary \ref{spatial H1} and Lemma \ref{H1 E max}, there holds ($\tau<1$)
 \begin{align*}
   P(\tilde{\Omega}_{\kappa})\geq 1-\frac{E\Big(\sup_{t\in[0,T]}\|\psi(t)\|_{{\mathbb H}^1}^2\Big)+E\Big(\max_{0\leq n\leq M}\|\phi^{n}\|_{{\mathbb H}^1}^2\Big)}{K^{-1}\log(\tau^{-\varepsilon})}\geq 1+\frac{1}{\tilde{\varepsilon}\log(\tau)}\,,
 \end{align*}
 for $\tilde{\varepsilon}=\varepsilon\Big[ K\Big(E(\sup_{t\in[0,T]}\|\psi(t)\|_{{\mathbb H}^1}^2)+E(\max_{0\leq n\leq M}\|\phi^{n}\|_{{\mathbb H}^1}^2\big)\Big)\Big]^{-1}$. Therefore, \eqref{Y} is valid.

 A consequence of Theorem \ref{thm1} is  convergence with rates in probability sense for iterates of the scheme.
 For every $\alpha <\frac12$ and $C>0$, we estimate
 \begin{align*}
   P\Big[\max_{0\leq n\leq M}\|e^{n}\|_{{\mathbb L}^2}\geq C\tau^{\alpha}\Big]
   &\leq P\Big[\Big\{\max_{0\leq n\leq M}\|e^{n}\|_{{\mathbb L}^2}\geq C\tau^{\alpha}\Big\}\cap \tilde{\Omega}_{\kappa}\Big]+P[\Omega\setminus \tilde{\Omega}_{\kappa}]\\
   &\leq \frac{K\tau}{C^{2}\tau^{2\alpha}}-\frac{1}{\varepsilon \log{\tau}}.
 \end{align*}
 Therefore, we obtain the following corollary.
\begin{corollary}\label{probability}
There exists a constant $C>0$ such that for all $\alpha <\frac12$,
\begin{align*}
  \lim_{\tau\rightarrow 0}P\Big[\max_{0\leq n\leq M}\|\psi(t_{n})-\phi^{n}\|_{\mathbb{L}^2}\geq C\tau^{\alpha}\Big]=0.
\end{align*}
\end{corollary}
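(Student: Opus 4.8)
The plan is to derive the corollary from Theorem~\ref{thm1} by a truncation-and-Markov argument, the only delicate point being a careful balancing of the truncation level $\kappa=\kappa(\tau)$ against the exponential constant $e^{K\kappa}$ that Theorem~\ref{thm1} charges for removing the cutoff. Fix $\alpha<\tfrac12$ and an arbitrary constant $C>0$, and let $\varepsilon>0$ be a parameter to be pinned down at the end, depending only on $\alpha$. I would set $\kappa=\kappa(\tau):=K^{-1}\log(\tau^{-\varepsilon})$, which tends to $+\infty$ as $\tau\downarrow 0$, and insert it into Theorem~\ref{thm1}: since $e^{K\kappa}=\tau^{-\varepsilon}$, this yields
\begin{equation*}
E\Big(\bm{1}_{\tilde\Omega_\kappa}\max_{0\leq n\leq M}\|e^n\|_{\mathbb{L}^2}^2\Big)\leq K\tau^{1-\varepsilon}.
\end{equation*}

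First I would split the event along $\tilde\Omega_\kappa$ and its complement,
\begin{equation*}
P\Big[\max_{0\leq n\leq M}\|e^n\|_{\mathbb{L}^2}\geq C\tau^{\alpha}\Big]\leq P\Big[\Big\{\max_{0\leq n\leq M}\|e^n\|_{\mathbb{L}^2}\geq C\tau^{\alpha}\Big\}\cap\tilde\Omega_\kappa\Big]+P[\Omega\setminus\tilde\Omega_\kappa],
\end{equation*}
and control the first summand by Markov's inequality together with the displayed bound above,
\begin{equation*}
P\Big[\Big\{\max_{0\leq n\leq M}\|e^n\|_{\mathbb{L}^2}\geq C\tau^{\alpha}\Big\}\cap\tilde\Omega_\kappa\Big]\leq \frac{E\big(\bm{1}_{\tilde\Omega_\kappa}\max_{0\leq n\leq M}\|e^n\|_{\mathbb{L}^2}^2\big)}{C^2\tau^{2\alpha}}\leq \frac{K}{C^2}\,\tau^{1-\varepsilon-2\alpha}.
\end{equation*}

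For the complement I would again apply Markov's inequality, now exploiting the uniform $\mathbb{H}^1$-moment bounds furnished by Corollary~\ref{spatial H1} for $\psi$ and Lemma~\ref{H1 E max} for the iterates $\{\phi^n\}$, to obtain
\begin{equation*}
P[\Omega\setminus\tilde\Omega_\kappa]\leq \frac{E\big(\sup_{0\leq t\leq T}\|\psi(t)\|_{\mathbb{H}^1}^2\big)+E\big(\max_{0\leq n\leq M}\|\phi^n\|_{\mathbb{H}^1}^2\big)}{\kappa}\leq \frac{K}{\log(\tau^{-\varepsilon})}.
\end{equation*}
Since $\alpha<\tfrac12$, the interval $(0,1-2\alpha)$ is nonempty, so I would fix any $\varepsilon\in(0,1-2\alpha)$; then the exponent $1-\varepsilon-2\alpha$ is strictly positive so $\tau^{1-\varepsilon-2\alpha}\to0$, while $\log(\tau^{-\varepsilon})\to\infty$, whence both summands vanish as $\tau\downarrow 0$. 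This gives the claimed limit. Observe that $C$ enters only through the prefactor $K/C^2$ of a term that decays regardless, so the conclusion holds for every fixed $C>0$ uniformly in $\alpha$, as asserted in the introduction.

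The main obstacle here is conceptual rather than computational: Theorem~\ref{thm1} pays for the non-truncated nonlinearity by a constant growing like $e^{K\kappa}$, so $\kappa$ may not be taken too large, yet $\kappa\to\infty$ is precisely what is required to force $P(\tilde\Omega_\kappa)\to 1$ via the moment bounds of Corollary~\ref{spatial H1} and Lemma~\ref{H1 E max}. The logarithmic scaling $\kappa\sim K^{-1}\log(\tau^{-\varepsilon})$ is the unique regime reconciling these two competing demands, converting the exponential factor into the harmless polynomial loss $\tau^{-\varepsilon}$; everything else reduces to routine applications of Markov's inequality.
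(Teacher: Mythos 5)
Your proposal is correct and takes essentially the same route as the paper's own proof: the same logarithmic scaling $\kappa=K^{-1}\log(\tau^{-\varepsilon})$ turning the factor $e^{K\kappa}$ from Theorem~\ref{thm1} into $\tau^{-\varepsilon}$, the same splitting of the event along $\tilde{\Omega}_{\kappa}$ with Chebyshev's inequality on the good set, and Markov's inequality combined with Corollary~\ref{spatial H1} and Lemma~\ref{H1 E max} to show $P(\Omega\setminus\tilde{\Omega}_{\kappa})\to 0$. If anything, you are slightly more precise than the paper in recording that the Chebyshev term decays like $\tau^{1-\varepsilon-2\alpha}$ (the paper's display writes $K\tau/(C^{2}\tau^{2\alpha})$) and in stating explicitly that $\varepsilon$ must be chosen in $(0,1-2\alpha)$.
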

The constant $C>0$ used in this corollary may be determined from the constant $K>0$ in Theorem \ref{thm1}. \\

\begin{proof}(\textit{of Theorem \ref{thm1}})
We test equation \eqref{error equation 1} with $z=\bar{e}^{n+1}$, and take the imaginary part.
    In below, we address the three terms on the left-hand sides resp. the three terms on the right-hand side independently.

     \textit{LHS (first term I).}
     Because of the identity $\Re\Big(a(\bar a-\bar b)\Big)=\frac12\Big(|a|^2-|b|^2+|a-b|^2\Big)$ for $a,b\in{\mathbb C}$,
    we have
    \begin{align}
      I=\Im\Big(i\int_{\mathcal{O}}(e^{n+1}-e^{n})\bar{e}^{n+1}dx\Big)=\frac12 (\|e^{n+1}\|_{{\mathbb L}^2}^{2}-\|e^{n}\|_{{\mathbb L}^2}^{2}+\|e^{n+1}-e^{n}\|_{{\mathbb L}^2}^{2}).
    \end{align}

      \textit{LHS (second term II).}
    We decompose the negative of term $II$ as follows,
    \begin{align}
      -II&=\Im\int_{t_{n}}^{t_{n+1}}\int_{\mathcal{O}}\big(\nabla\psi(s)-\theta\nabla\phi^{n+1}-(1-\theta)\nabla\phi^{n}\big)\nabla\bar{e}^{n+1}dxds\nonumber\\
      &=\theta\Im\int_{t_{n}}^{t_{n+1}}\int_{\mathcal{O}}\big(\nabla\psi(s)-\nabla\psi(t_{n+1})\big)\nabla\bar{e}^{n+1}dxds
      +(1-\theta)\Im\int_{t_{n}}^{t_{n+1}}\int_{\mathcal{O}}\big(\nabla\psi(s)-\nabla\psi(t_{n})\big)\nabla\bar{e}^{n+1}dxds\nonumber\\
      &\quad+\tau\Im\int_{\mathcal{O}}\big(\theta\nabla e^{n+1}+(1-\theta)\nabla e^{n}\big)\nabla\bar{e}^{n+1}dx
      \nonumber\\
      &=II^1+II^2+II^3.
    \end{align}
    The estimates of terms $II^1$ and $II^2$ are similar, we use integration by parts and equation \eqref{S_NLSE_Ito}. Taking $II^1$ as an example,
    we know that
    \begin{align*}
      II^{1}
      &=-\theta\Im\int_{t_{n}}^{t_{n+1}}\int_{\mathcal{O}}\Delta\bar{e}^{n+1}\int_{t_{n+1}}^{s}\Big(i\Delta\psi(\nu)-i|\psi(\nu)|^{2}\psi(\nu)
      -\frac12 \psi(\nu)F_{\mathbf Q}\Big)d\nu dxds\nonumber\\
      &\quad-\theta\Im\int_{t_{n}}^{t_{n+1}}\int_{\mathcal{O}}\Delta\bar{e}^{n+1}\int_{t_{n+1}}^{s}i\psi(\nu)dW(\nu)dxds\\
      &=II^1_a+II^1_b.
    \end{align*}
    We use the embedding  ${\mathbb H}^1\hookrightarrow {\mathbb L}^6$ and the stability of solution $\{\psi(t);\,t\in[0,T]\}$ and iterates $\{\phi^n;\,n=0,1,\cdots,M\}$; i.e., Corollary \ref{spatial H1}, Lemma \ref{spatial H2 new} and Lemma \ref{alg2:H2} to obtain
    \begin{align*}
      II^1_a &\leq \int_{t_{n}}^{t_{n+1}}\int_{t_{n+1}}^{s}\|\Delta\bar{e}^{n+1}\|_{{\mathbb L}^2}\Big(\|\Delta\psi(\nu)\|_{{\mathbb L}^2}+\|\psi(\nu)\|_{{\mathbb H}^1}^3+\|\psi(\nu)\|_{{\mathbb L}^2}\|F_{\mathbf Q}\|_{{\mathbb L}^{\infty}}\Big)d\nu ds\\
      &\leq K\tau^2\Big(\|\Delta\psi(t_{n+1})\|_{{\mathbb L}^2}^{2}+\|\Delta\phi^{n+1}\|_{{\mathbb L}^2}^{2}\Big)+K\int_{t_{n}}^{t_{n+1}}\int_{t_{n+1}}^{s}\Big(\|\psi(\nu)\|_{{\mathbb H}^2}^{2}+\|\psi(\nu)\|_{{\mathbb H}^1}^{6}+\|\psi(\nu)\|_{{\mathbb L}^2}^{2}\Big)d\nu ds,
    \end{align*}
    where we use $\|\Delta e^{n+1}\|_{{\mathbb L}^2}\leq \|\Delta\psi(t_{n+1})\|_{{\mathbb L}^2}+\|\Delta\phi^{n+1}\|_{{\mathbb L}^2}$.
For the estimate of the term $II^1_b$, we use integration by parts twice and Young's inequality to get
    \begin{align*}
      II^1_b
      &\leq K\int_{t_n}^{t_{n+1}}\left[\| e^{n+1}\|_{{\mathbb L}^2}^{2}+\left\|\int_{t_{n+1}}^s \psi(\nu)dW(\nu)\right\|_{{\mathbb H}^2}^{2}\right]ds\\
      &= K\tau \| e^{n+1}\|_{{\mathbb L}^2}^{2}+K\int_{t_n}^{t_{n+1}}\left\|\int_{t_{n+1}}^s \psi(\nu)dW(\nu)\right\|_{{\mathbb H}^2}^{2}ds.
    \end{align*}

    Using a property of complex numbers, integration by parts and the triangle inequality we get
\begin{align*}
  II^3&=\tau(1-\theta)\Im \int_{\mathcal{O}}\Delta e^{n}(\bar e^{n+1}-\bar e^{n})dx\leq K\tau\|\Delta e^n\|_{{\mathbb L}^2}\|e^{n+1}-e^{n}\|_{{\mathbb L}^2}\\
&  \leq \frac{1}{16}\|e^{n+1}-e^{n}\|_{{\mathbb L}^2}^2+K\tau^2\Big(\|\Delta\psi(t_n)\|_{{\mathbb L}^2}^2+\|\Delta \phi^n\|_{{\mathbb L}^2}^2\Big).
\end{align*}

     \textit{LHS (third term III).}
The negative of the term $III$ is
    \begin{align*}
      III
      &=\Im\int_{t_{n}}^{t_{n+1}}\int_{\mathcal{O}}\Big(|\psi(s)|^{2}\psi(s)-|\psi(t_{n})|^{2}\psi(t_{n})\Big)\bar{e}^{n+1}dxds
     -\frac12\Im\int_{t_{n}}^{t_{n+1}}\int_{\mathcal{O}}(|\phi^{n+1}|^{2}-|\phi^{n}|^{2})\phi^{n+\frac12}\bar{e}^{n+1}dxds\\
      &\quad-\frac12\Im\int_{t_{n}}^{t_{n+1}}\int_{\mathcal{O}}|\phi^{n}|^{2}(\phi^{n+1}-\phi^{n})\bar{e}^{n+1}dxds
      +\Im\int_{t_{n}}^{t_{n+1}}\int_{\mathcal{O}}\Big(|\psi(t_{n})|^{2}\psi(t_{n})-|\phi^{n}|^{2}\phi^{n}\Big)\bar{e}^{n+1}dxds\\
      &=III^1+III^2+III^3+III^4.
    \end{align*}
    The estimations of terms $III^1$, $III^2$ and $III^3$ in the above equality are similar, using Lemmas \ref{temporal L2} and \ref{alg2:H2}, and Sobolev embeddings. Below we only present the estimate of the first term in the above equality.
We benefit from the identity $|a|^2a-|b|^2b=|a|^2(a-b)+|b|^2(a-b)+ab(\bar a-\bar b)$ for $a,b\in{\mathbb C}$ to obtain
    \begin{align*}
      III^{1}
      &=\Im\int_{t_{n}}^{t_{n+1}}\int_{\mathcal{O}}|\psi(s)|^{2}\big(\psi(s)-\psi(t_{n})\big)\bar{e}^{n+1}dxds
      +\Im\int_{t_{n}}^{t_{n+1}}\int_{\mathcal{O}}|\psi(t_{n})|^{2}\big(\psi(s)-\psi(t_{n})\big)\bar{e}^{n+1}dxds\\
      &\quad+\Im\int_{t_{n}}^{t_{n+1}}\int_{\mathcal{O}}\psi(s)\psi(t_{n})\big(\bar{\psi}(s)-\bar{\psi}(t_{n})\big)\bar{e}^{n+1}dxds.
      \end{align*}
      By the continuous embedding ${\mathbb H}^{1}\hookrightarrow {\mathbb L}^{\infty}$ for $d=1$, we may conclude that
      \begin{align*}
      III^{1}&\leq K\tau\|e^{n+1}\|_{{\mathbb L}^{2}}^{2}+K\int_{t_{n}}^{t_{n+1}}\|\psi(s)\|_{{\mathbb H}^{1}}^{4}\|\psi(s)-\psi(t_{n})\|_{{\mathbb L}^{2}}^{2}ds
      +K\int_{t_{n}}^{t_{n+1}}\|\psi(t_{n})\|_{{\mathbb H}^{1}}^{4}\|\psi(s)-\psi(t_{n})\|_{{\mathbb L}^{2}}^{2}ds\\
      &\leq K\tau\|e^{n+1}\|_{{\mathbb L}^2}^{2}+K\tau\int_{t_{n}}^{t_{n+1}}\|\psi(s)\|_{{\mathbb H}^{1}}^{8}ds+K\tau^{2}\|\psi(t_{n})\|_{{\mathbb H}^{1}}^{8}
      +K\frac{1}{\tau}\int_{t_{n}}^{t_{n+1}}\|\psi(s)-\psi(t_{n})\|_{{\mathbb L}^2}^{4}ds.
    \end{align*}
    The estimation of $III^{2}$ and $III^{3}$ are similar as that of $III^{1}$.
    So we have
    \begin{align*}
      III^{2}+III^{3}\leq K\tau\|e^{n+1}\|_{{\mathbb L}^2}^{2}+K\tau^{2}\|\phi^{n+1}\|_{{\mathbb H}^{1}}^{8}+K\tau^{2}\|\phi^{n}\|_{{\mathbb H}^{1}}^{8}
      +K\|\phi^{n+1}-\phi^{n}\|_{{\mathbb L}^2}^{4}.
    \end{align*}
    For term $III^{4}$ we use again the identity $|a|^2a-|b|^2b=|a|^2(a-b)+|b|^2(a-b)+ab(\bar a-\bar b)$, for $a,b\in{\mathbb C}$ to have
    \begin{align*}
      III^{4}
      &=\tau\Im\int_{\mathcal{O}}\Big(|\psi(t_{n})|^{2}e^{n}\bar{e}^{n+1}+|\phi^{n}|^{2}e^{n}\bar{e}^{n+1}
      +\psi(t_{n})\phi^{n}\bar{e}^{n}\bar{e}^{n+1}\Big)dx\\
      &\leq K\tau\|\psi(t_{n})\|_{{\mathbb H}^{1}}^{2}\|e^{n}\|_{{\mathbb L}^2}\|e^{n+1}\|_{{\mathbb L}^2}+K\tau\|\phi^{n}\|_{{\mathbb H}^{1}}^{2}\|e^{n}\|_{{\mathbb L}^2}\|e^{n+1}\|_{{\mathbb L}^2}\\
      &\leq K\tau\big(\|\psi(t_{n})\|_{{\mathbb H}^{1}}^{2}+\|\phi^{n}\|_{{\mathbb H}^{1}}^{2}\big)\|e^{n}\|_{{\mathbb L}^2}^{2}+K\tau\big(\|\psi(t_{n})\|_{{\mathbb H}^{1}}^{2}
      +\|\phi^{n}\|_{{\mathbb H}^{1}}^{2}\big)\|e^{n+1}\|_{{\mathbb L}^2}^{2}.
    \end{align*}
  \textit{RHS (first term IV).}
  By writing $\bar{e}^{n+1}=\big(\bar{e}^{n+1}-\bar{e}^{n}\big)+\bar{e}^{n}$, we have
    \begin{align*}
      IV&=\Im\int_{t_{n}}^{t_{n+1}}\int_{\mathcal{O}}(\psi(s)-\phi^{n})(\bar{e}^{n+1}-\bar{e}^{n})dxdW(s)
      +\Im\int_{t_{n}}^{t_{n+1}}\int_{\mathcal{O}}(\psi(s)-\phi^{n})\bar{e}^{n}dxdW(s)\\
      &=\Im\int_{t_{n}}^{t_{n+1}}\int_{\mathcal{O}}(\psi(s)-\psi(t_{n}))(\bar{e}^{n+1}-\bar{e}^{n})dxdW(s)+\Im\int_{t_{n}}^{t_{n+1}}\int_{\mathcal{O}}e^{n}(\bar{e}^{n+1}-\bar{e}^{n})dxdW(s)\\
      &\quad +\Im\int_{t_{n}}^{t_{n+1}}\int_{\mathcal{O}}(\psi(s)-\psi(t_{n}))\bar{e}^{n}dxdW(s)\\
      &=:IV^{1}+IV^{2}+IV^{3}.
    \end{align*}
    For term $IV^{1}$, via Fubini theorem we have
    \begin{align*}
      IV^{1}&=\Im\int_{\mathcal{O}}(\bar{e}^{n+1}-\bar{e}^{n})\int_{t_{n}}^{t_{n+1}}(\psi(s)-\psi(t_{n}))dW(s)dx\\
      &\leq \frac{1}{16}\|e^{n+1}-e^{n}\|_{{\mathbb L}^2}^{2}+K\left\|\int_{t_{n}}^{t_{n+1}}\big(\psi(s)-\psi(t_{n})\big)dW(s)\right\|_{{\mathbb L}^2}^{2}.
    \end{align*}
    For term $IV^{2}$, we have
    \begin{align*}
      IV^{2}=\Im\int_{\mathcal{O}}e^{n}(\bar{e}^{n+1}-\bar{e}^{n})\Delta_{n} Wdx
      \leq\frac{1}{16}\|e^{n+1}-e^{n}\|_{{\mathbb L}^{2}}^{2}+K\|e^{n}\|_{{\mathbb L}^2}^{2}\|\Delta_{n} W\|_{{\mathbb L}^{\infty}}^{2}.
    \end{align*}
     \textit{RHS (second and third terms V).} We insert the equation for $\phi^{n+1}-\phi^{n}$ to get
    \begin{align*}
      V
      &=-\frac12\Re\int_{t_{n}}^{t_{n+1}}\int_{\mathcal{O}}\psi(s)F_{\mathbf Q}\bar{e}^{n+1}dxds\\
     &\quad -\frac12\Im\int_{\mathcal{O}}
      \Big[i\tau\big(\theta\Delta\phi^{n+1}+(1-\theta)\Delta\phi^{n}\big)-i\frac{\tau}{2}(|\phi^{n+1}|^{2}+|\phi^{n}|^{2})\phi^{n+\frac12}-i\phi^{n+\frac12}\Delta_{n} W\Big]\bar{e}^{n+1}\Delta_{n} Wdx\\
      &=-\frac{\tau}{2}\Re\int_{\mathcal{O}}\big(\theta\Delta\phi^{n+1}+(1-\theta)\Delta\phi^{n}\big)\bar{e}^{n+1}\Delta_{n} Wdx+\frac{\tau}{4}\Re\int_{\mathcal{O}}(|\phi^{n+1}|^{2}
      +|\phi^{n}|^{2})\phi^{n+\frac12}\bar{e}^{n+1}\Delta_{n} Wdx\\
      &\quad-\frac12\Re\int_{t_{n}}^{t_{n+1}}\int_{\mathcal{O}}\Big(\psi(s)-\frac12(\psi(t_{n+1})+\psi(t_{n}))\Big)F_{\mathbf Q}\bar{e}^{n+1}dxds\\
      &\quad-\frac{\tau}{2}\Re\int_{\mathcal{O}}e^{n+\frac12}F_{\mathbf Q}\bar{e}^{n+1}dx
      +\frac12\Re\int_{\mathcal{O}}\phi^{n+\frac12}\bar{e}^{n+1}\big((\Delta_{n} W)^{2}-F_{\mathbf Q}\tau\big)dx\\
      &=:V^{1}+V^{2}+V^{3}+V^{4}+V^{5}.
    \end{align*}
    For term $V^{1}$, by the identity $\theta \Delta\phi^{n+1}+(1-\theta)\Delta\phi^{n}=\theta\Delta(\phi^{n+1}-\phi^{n})+\Delta\phi^{n}$ and Young's inequality we have
    \begin{align*}
      V^{1}&\leq K\tau\Big(\|\Delta\phi^{n+1}\|_{{\mathbb L}^2}+\|\Delta\phi^{n}\|_{{\mathbb L}^2}\Big)\|e^{n+1}\|_{{\mathbb L}^2}\|\Delta_n W\|_{{\mathbb L}^{\infty}}\\
      &\leq K\tau\|e^{n+1}\|_{{\mathbb L}^2}^2+K\tau\Big(\|\Delta\phi^{n+1}\|_{{\mathbb L}^2}^2\|\Delta_n W\|_{{\mathbb L}^{\infty}}^2+\|\Delta\phi^{n}\|_{{\mathbb L}^2}^2\|\Delta_n W\|_{{\mathbb L}^{\infty}}^2\Big)\\
      &\leq  K\tau\|e^{n+1}\|_{{\mathbb L}^2}^2+K\tau^2\|\Delta\phi^{n+1}\|_{{\mathbb L}^2}^4+K\|\Delta_n W\|_{{\mathbb L}^{\infty}}^4+K\tau \|\Delta\phi^{n}\|_{{\mathbb L}^2}^2\|\Delta_n W\|_{{\mathbb L}^{\infty}}^2.
    \end{align*}
    The estimation of $V^{2}$ is similar as that of $V^{1}$ and we have
    \begin{align*}
      V^{2}\leq K\tau\|e^{n+1}\|_{{\mathbb L}^2}^{2}+K\tau^{2}\Big(\| \phi^{n+1}\|_{{\mathbb L}^6}^{12}+\| \phi^{n}\|_{{\mathbb L}^6}^{12}\Big)+K\|\Delta_{n} W\|_{{\mathbb L}^{\infty}}^{4}.
    \end{align*}
    For term $V^{3}$, we have
    \begin{align*}
      V^{3}\leq K\int_{t_{n}}^{t_{n+1}}\Big(\|\psi(s)-\psi(t_{n})\|_{{\mathbb L}^2}^{2}+\|\psi(s)-\psi(t_{n+1})\|_{{\mathbb L}^2}^{2}\Big)ds+K\tau\|e^{n+1}\|_{{\mathbb L}^2}^{2}.
    \end{align*}
    For term $V^{4}$, we have
    \begin{align*}
      V^{4}\leq K\tau\Big(\|e^{n}\|_{{\mathbb L}^2}^{2}+\|e^{n+1}\|_{{\mathbb L}^2}^{2}\Big).
    \end{align*}
    For term $V^{5}$, we have
    \begin{align*}
      V^{5}&=\frac12\Re\int_{\mathcal{O}}\phi^{n+\frac12}\bar{e}^{n+1}\big((\Delta_{n} W)^{2}-F_{\mathbf Q}\tau\big)dx\\
      &=\frac12\Re\int_{\mathcal{O}}(\phi^{n+\frac12}\bar{e}^{n+1}-\phi^{n}\bar{e}^{n})\big((\Delta_{n} W)^{2}-F_{\mathbf Q}\tau\big)dx
      +\frac12\Re\int_{\mathcal{O}}\phi^{n}\bar{e}^{n}\big((\Delta_{n} W)^{2}-F_{\mathbf Q}\tau\big)dx\\
      &=:V_{a}^{5}+V_{b}^{5},
    \end{align*}
    where
    \begin{align*}
      V_{a}^{5}&=\frac12\Re\int_{\mathcal{O}}\phi^{n+\frac12}(\bar{e}^{n+1}-\bar{e}^{n})\big((\Delta_{n} W)^{2}-F_{\mathbf Q}\tau\big)dx
      +\frac14\Re\int_{\mathcal{O}}(\phi^{n+1}-\phi^{n})\bar{e}^{n}\big((\Delta_{n} W)^{2}-F_{\mathbf Q}\tau\big)dx\\
      &\leq \frac{1}{16}\|e^{n+1}-e^{n}\|_{{\mathbb L}^2}^{2}+K\tau^{2}\|\phi^{n+\frac12}\|_{{\mathbb L}^2}^{4}+K\frac{1}{\tau^2}\|(\Delta_{n} W)^{2}-F_{\mathbf Q}\tau\|_{{\mathbb L}^{\infty}}^{4}\\
      &\quad+K\frac{1}{\tau}\|e^{n}\|_{{\mathbb L}^2}^{2}\|(\Delta_{n} W)^{2}-F_{\mathbf Q}\tau\|_{{\mathbb L}^{\infty}}^{2}+K\tau\|\phi^{n+1}-\phi^{n}\|_{{\mathbb L}^2}^{2}.
    \end{align*}
    Combining all estimations above, we have
    \begin{align}\label{error inequality}
      \|e^{n+1}\|_{{\mathbb L}^2}^{2}-\|e^{n}\|_{{\mathbb L}^2}^{2}+\|e^{n+1}-e^{n}\|_{{\mathbb L}^2}^{2}\leq \mathcal{G}^{n}+\mathcal{M}^{n},
    \end{align}
    where
    \begin{align*}
      \mathcal{G}^{n}&:=K\int_{t_{n}}^{t_{n+1}}\int_{s}^{t_{n+1}}\|\Delta\psi(\rho)\|_{{\mathbb L}^2}^{2}d\rho ds+K\tau^2\|\Delta\psi(t_{n+1})\|_{{\mathbb L}^2}^{2}+K\tau^{2}\|\Delta\phi^{n}\|_{{\mathbb L}^2}^{2}\\
      &\quad+K\int_{t_{n}}^{t_{n+1}}\int_{s}^{t_{n+1}}
      (\|\psi(\rho)\|_{{\mathbb L}^6}^{6}+\|\psi(\rho)\|_{{\mathbb L}^2}^{2})d\rho ds+K\tau\|\nabla e^{n+1}-\nabla e^{n}\|_{{\mathbb L}^2}^{2}\\
      &\quad+K\int_{t_n}^{t_{n+1}}\left\|\int_{t_{n+1}}^s \psi(\nu)dW(\nu)\right\|_{{\mathbb {\mathbb H}}^2}^{2}ds
      +K\tau\|e^{n+1}\|_{{\mathbb L}^2}^{2}+K\tau\int_{t_{n}}^{t_{n+1}}\|\psi(s)\|_{{\mathbb H}^{1}}^{8}ds+K\tau^{2}\|\psi(t_{n})\|_{{\mathbb H}^{1}}^{8}\\
      &\quad +K\frac{1}{\tau}\int_{t_{n}}^{t_{n+1}}\|\psi(s)-\psi(t_{n})\|_{{\mathbb L}^2}^{4}ds+K\tau^{2}\|\phi^{n+1}\|_{{\mathbb H}^{1}}^{8}
      +K\tau^{2}\|\phi^{n}\|_{{\mathbb H}^{1}}^{8}+K\|\phi^{n+1}-\phi^{n}\|_{{\mathbb L}^{2}}^{4}\\
      &\quad+K\tau^{2}\|\phi^{n+1}\|_{{\mathbb L}^6}^{6}+K\|\Delta_{n} W\|_{{\mathbb L}^{\infty}}^{4}+K\tau\|\phi^{n+1}-\phi^{n}\|_{{\mathbb L}^2}^{2}
      +K\int_{t_{n}}^{t_{n+1}}\|\psi(s)-\psi(t_{n})\|_{{\mathbb L}^2}^{2}ds\\
      &\quad+K\int_{t_{n}}^{t_{n+1}}\|\psi(s)-\psi(t_{n+1})\|_{{\mathbb L}^2}^{2}ds+K\tau^2\|\Delta\phi^{n+1}\|_{{\mathbb L}^2}^{2}+K\tau\|\Delta\phi^{n}\|_{{\mathbb L}^2}^{2}\|\Delta_{n} W\|_{{\mathbb L}^{\infty}}^{2}\\
      &\quad+K\left\|\int_{t_{n}}^{t_{n+1}}(\psi(s)-\psi(t_{n}))dW(s)\right\|_{{\mathbb L}^2}^{2}+K\|e^{n}\|_{{\mathbb L}^2}^{2}\|\Delta_{n} W\|_{{\mathbb L}^{\infty}}^{2}
      +K\frac{1}{\tau^{2}}\|(\Delta_{n} W)^{2}-F_{\mathbf Q}\tau\|_{{\mathbb L}^{\infty}}^{4}\\
      &\quad+K\tau\big(\|\psi(t_{n})\|_{{\mathbb H}^1}^{2}+\|\phi^{n}\|_{{\mathbb H}^1}^{2}\big)\|e^{n}\|_{{\mathbb L}^2}^{2}
      +K\tau\big(\|\psi(t_{n})\|_{{\mathbb H}^1}^{2}+\|\phi^{n}\|_{{\mathbb H}^1}^{2}\big)\|e^{n+1}\|_{{\mathbb L}^2}^{2}
    \end{align*}
    and
    \begin{align*}
      \mathcal{M}^{n}:=\Im\int_{t_{n}}^{t_{n+1}}\int_{\mathcal{O}}(\psi(s)-\psi(t_{n}))\bar{e}^{n}dxdW(s)
      +\frac12\Re\int_{\mathcal{O}}\phi^{n}\bar{e}^{n}\big((\Delta_{n} W)^{2}-F_{\mathbf Q}\tau\big)dx.
    \end{align*}
    Now consider the error inequality \eqref{error inequality} for some $0\leq \ell\leq M$, multiply it by $\bm{1}_{\tilde{\Omega}_{\kappa,\ell}}$,
    sum over the index from $\ell=0$ to $n$, take the maximum between $0$ and $m\leq M$, and then take the expectation. The choice of this indicator function is necessary such that the term corresponding to the stochastic integral ${\mathcal M}^{\ell}$ is a martingale, which allows the use of the Burkholder-Davis-Gundy inequality. So we obtain correspondingly for the first term on the left-hand side of \eqref{error inequality}
    \begin{align*}
      &E\Big[\max_{0\leq n\leq m}\sum_{\ell=0}^{n}\bm{1}_{\tilde{\Omega}_{\kappa,\ell}}\big(\|e^{\ell+1}\|_{{\mathbb L}^2}^{2}-\|e^{\ell}\|_{{\mathbb L}^2}^{2}\big)\Big]\\
      &=E\Big[\max_{0\leq n\leq m}\Big(\bm{1}_{\tilde{\Omega}_{\kappa,n}}\|e^{n+1}\|_{{\mathbb L}^2}^{2}-\bm{1}_{\tilde{\Omega}_{\kappa,0}}\|e^{0}\|_{{\mathbb L}^2}^{2}
      +\sum_{\ell=1}^{n}(\bm{1}_{\tilde{\Omega}_{\kappa,\ell-1}}-\bm{1}_{\tilde{\Omega}_{\kappa,\ell}})\|e^{\ell}\|_{{\mathbb L}^2}^{2}\Big)\Big]\\
      &\geq E\Big[\max_{0\leq n\leq m}\bm{1}_{\tilde{\Omega}_{\kappa,n}}\|e^{n+1}\|_{{\mathbb L}^2}^{2}\Big],
    \end{align*}
    where we use the fact that the sum in the second line is positive because {$\tilde{\Omega}_{\kappa,\ell-1}\supset\tilde{\Omega}_{\kappa,\ell}$},
    and that $e^{0}=0$ $P$-a.s. The next terms to be considered are those corresponding to $\mathcal{G}^{\ell}$. Under the conclusions of Lemma \ref{spatial H2 new}, Lemma \ref{temporal L2}, Lemma \ref{temporal H1 new}, Lemma \ref{alg2:H1} and Lemma \ref{alg2:H2},
    one knows that
    \begin{align*}
      E\Big[\max_{0\leq n\leq m}\sum_{\ell=0}^{n}\bm{1}_{\tilde{\Omega}_{\kappa,\ell}}\mathcal{G}^{\ell}\Big]
      =\sum_{\ell=0}^{m}E\Big[\bm{1}_{\tilde{\Omega}_{\kappa,\ell}}\mathcal{G}^{\ell}\Big]
      \leq K\tau+K\tau(1+\kappa)\sum_{\ell=0}^{m}E\Big(\bm{1}_{\tilde{\Omega}_{\kappa,\ell}}\|e^{\ell+1}\|_{{\mathbb L}^2}^{2}\Big).
    \end{align*}
    In particular here, by Burkholder-Davis-Gundy inequality,
    \begin{align*}
      &E\Big[\max_{0\leq n\leq m}\sum_{\ell=0}^{n}\bm{1}_{\tilde{\Omega}_{\kappa,\ell}}\Im\int_{t_{\ell}}^{t_{\ell+1}}\int_{\mathcal{O}}
      (\psi(s)-\psi(t_{\ell}))\bar{e}^{\ell}dxdW(s)\Big]\\
      &\leq KE\Big[\Big(\sum_{\ell=0}^{m}\bm{1}_{\tilde{\Omega}_{\kappa,\ell}}
      \int_{t_{\ell}}^{t_{\ell+1}}\|\psi(s)-\psi(t_{\ell})\|_{{\mathbb L}^2}^{2}\|F_{\mathbf Q}\|_{{\mathbb L}^{\infty}}^{2}\|e^{\ell}\|_{{\mathbb L}^2}^{2}ds\Big)^{\frac12}\Big]\\
      &\leq KE\Big[\Big(\max_{0\leq n\leq m}\bm{1}_{\tilde{\Omega}_{\kappa,n}}\|e^{n}\|_{{\mathbb L}^2}\Big)\Big(
      \sum_{\ell=0}^{m}\bm{1}_{\tilde{\Omega}_{\kappa,\ell}}\int_{t_{\ell}}^{t_{\ell+1}}\|\psi(s)-\psi(t_{n})\|_{{\mathbb L}^2}^{2}ds\Big)^{\frac12}\Big]\\
      &\leq \frac14 E\Big[\max_{0\leq n\leq m}\bm{1}_{\tilde{\Omega}_{\kappa,n}}\|e^{n+1}\|_{{\mathbb L}^2}^{2}\Big]+K\tau.
    \end{align*}
    For the second term, one needs to prove the martingale property first, which is equivalent to proving
    \begin{align*}
      E\Big[\bm{1}_{\tilde{\Omega}_{\kappa,\ell}}\Re\int_{\mathcal{O}}\phi^{\ell}\bar{e}^{\ell}\Delta_{\ell}\tilde{W}dx\Big|\mathcal{F}_{t_{j}}\Big]=0,
    \end{align*}
    where $\Delta_{\ell}\tilde{W}=(\Delta_{\ell} W)^{2}-F_{\mathbf Q}\tau$ for $j\leq \ell\leq n$.
    In fact, we have
    \begin{align*}
      E\Big[\bm{1}_{\tilde{\Omega}_{\kappa,\ell}}\Re\int_{\mathcal{O}}\phi^{\ell}\bar{e}^{\ell}\Delta_{\ell}\tilde{W}dx\Big|\mathcal{F}_{t_{j}}\Big]
     & =E\Big[E\Big(\bm{1}_{\tilde{\Omega}_{\kappa,\ell}}\Re\int_{\mathcal{O}}\phi^{\ell}\bar{e}^{\ell}\Delta_{\ell}\tilde{W}dx\Big|\mathcal{F}_{t_{\ell}}\Big)\Big|\mathcal{F}_{t_{j}}\Big]\\
      &=E\Big[\Re\int_{\mathcal{O}}\bm{1}_{\tilde{\Omega}_{\kappa,\ell}}\phi^{\ell}\bar{e}^{\ell}E\big(\Delta_{\ell}\tilde{W}\big|\mathcal{F}_{t_{\ell}}\big)dx\Big|\mathcal{F}_{t_{j}}\Big]
      =0,
    \end{align*}
    the last line holds since
    \begin{align*}
      E\big[\Delta_{\ell}\tilde{W}\big|\mathcal{F}_{t_{\ell}}\big]=E\big[(\Delta_{\ell} W)^{2}\big|\mathcal{F}_{t_{\ell}}\big]-F_{\mathbf Q}\tau=0.
    \end{align*}
    Similar to before, we may estimate by  Burkholder-Davis-Gundy inequality
    \begin{align*}
      E\Big[\max_{0\leq n\leq m}\sum_{\ell=0}^{n}\bm{1}_{\tilde{\Omega}_{\kappa,\ell}}\Re\int_{\mathcal{O}}
      \phi^{\ell}\bar{e}^{\ell}\Delta_{\ell}\tilde{W}dx\Big]
      \leq \frac14 E\Big(\max_{0\leq n\leq m}\bm{1}_{\tilde{\Omega}_{\kappa,n}}\|e^{n+1}\|_{{\mathbb L}^2}^{2}\Big)+K\tau.
    \end{align*}
    Combining these estimates together, we have
    \begin{align*}
      \frac12 E\Big[\max_{0\leq n\leq m}\bm{1}_{\tilde{\Omega}_{\kappa,n}}\|e^{n+1}\|_{{\mathbb L}^2}^{2}\Big]\leq
      K\tau+K\tau(1+\kappa)\sum_{\ell=0}^{m}E\Big(\bm{1}_{\tilde{\Omega}_{\kappa,\ell}}\|e^{\ell+1}\|_{{\mathbb L}^2}^{2}\Big).
    \end{align*}
    The discrete Gronwall's lemma then leads to
    \[ E\Big[\max_{0\leq n\leq m}\bm{1}_{\tilde{\Omega}_{\kappa,n}}\|e^{n+1}\|_{{\mathbb L}^2}^{2}\Big]\leq Ke^{Kt_{m}\kappa}\tau.\]
    Using the nestedness of property $\tilde{\Omega}_{\kappa,m}\subset\tilde{\Omega}_{\kappa,n}$
    for all $0 \leq n \leq m$ one obtains
    \[E\Big(\bm{1}_{\tilde{\Omega}_{\kappa,m}}\max_{0\leq n\leq m}\|e^{n+1}\|_{{\mathbb L}^2}^{2}\Big)\leq E\Big[\max_{0\leq n\leq m}\bm{1}_{\tilde{\Omega}_{\kappa,n}}\|e^{n+1}\|_{{\mathbb L}^2}^{2}\Big]\leq Ke^{Kt_{m}\kappa}\tau.\]
    The proof is completed by letting $m=M$.
    \end{proof}

   {
   The following remark discusses uniqueness of solutions of \eqref{algorithm 2} on `large' subsets of $\Omega$, on subsets of which the error estimate in Theorem~\ref{thm1} is applied.
   \begin{remark}\label{uni1} Let $T \equiv t_M >0$, and $\{\phi^{n}_j;\, 0\leq n\leq M\}$, $j=1,2$ be two solutions of \eqref{algorithm 2},
   and denote $\xi^n := \phi^n_1 - \phi^n_2$, as well as $\xi^{n+\theta} := \theta \xi^{n+1} + (1-\theta) \xi^n$ to obtain
   $$ i \int_{{\mathscr O}} (\xi^{n+\theta} - \xi^n)z\, dx - \tau \theta \int_{{\mathscr O}} \nabla \xi^{n+\theta} \nabla z\, dx - \frac{\tau{\theta}}{2} \int_{{\mathscr O}} {\mathcal F}(\phi^n_j, \phi^{n+1}_j) z\, dx = \theta \int_{{\mathscr O}} \phi^{n+1/2} \Delta_n W z\, dx \qquad \forall z \in {\mathbb H}^1_0,$$
   where
   $$ {\mathcal F}(\phi^n_j, \phi^{n+1}_j) := \bigl( \vert \phi_1^{n+1}\vert^2 + \vert\phi_1^n\vert^2\bigr) \phi_1^{n+1/2} - \bigl( \vert \phi_2^{n+1}\vert^2 + \vert\phi_2^n\vert^2\bigr) \phi_1^{n+1/2}$$
   Then put $z = \bar{\xi}^{n+\theta}$, and take imaginary parts; by arguments
   which are similar to those in the proof of Theorem~\ref{thm1}, and using the algebraic identities
   {
   \begin{equation}\label{waa0} \xi^{n+1/2} =
   \frac{1}{2 \theta} \bigl( \xi^{n+\theta} + [2\theta-1] \xi^{n}\bigr) \qquad \mbox{\rm resp.} \qquad
   \xi^{n+1}=\frac{1}{\theta}\big(\xi^{n+\theta}-\xi^{n}\big)+\xi^{n} \,,
   \end{equation}
   }
   we arrive at
  \begin{equation}\label{waa1}
  \frac12 \Bigl(\|\xi^{n+\theta}\|_{{\mathbb L}^2}^{2}-\|\xi^{n}\|_{{\mathbb L}^2}^{2}+\|\xi^{n+\theta}-\xi^{n}\|_{{\mathbb L}^2}^{2} \Bigr)
  = I+ \theta \Im \int_{{\mathscr O}}
   \xi^{n+1/2} \bar{\xi}^{n+\theta}  \Delta_n W  \,dx,
  \end{equation}
   where
   {
   \begin{equation*}
     \begin{split}
       I&=\frac{\tau\theta}{2} \Im \int_{{\mathscr O}} \bigl( \vert  \phi^{n+1}_1\vert^2 + \vert \phi_1^n\vert^2 \bigr) \xi^{n+1/2} \bar{\xi}^{n+\theta}\, dx
       +\frac{\tau\theta}{2}\Im\int_{{\mathscr O}} \bigl(|\phi^{n+1}_{1}|^2-|\phi_{2}^{n+1}|^2+|\phi^{n}_{1}|^2-|\phi_{2}^{n}|^2\bigr)\phi_{2}^{n+\frac12}\bar{\xi}^{n+\theta}\,dx\\
       &=:I_a+I_b.
     \end{split}
   \end{equation*}
   }
   We use (\ref{waa0}) to compute for the last term in (\ref{waa1}) that
   \begin{equation}\label{waa2}
   \begin{split}
   \theta \Im \int_{{\mathscr O}}
   \xi^{n+1/2} \bar{\xi}^{n+\theta}  \Delta_n W  \,dx &= \frac{2\theta-1}{2} \Im \int_{{\mathscr O}} \xi^n \bar{\xi}^{n+\theta} \Delta_n W\, dx
    = \frac{2\theta-1}{2} \Im \int_{{\mathscr O}} \xi^n \bigl(\bar{\xi}^{n+\theta} - \bar{\xi}^n\bigr) \Delta_n W\, dx\\
   &\leq \frac{1}{4} \Vert \bar{\xi}^{n+\theta} - \bar{\xi}^n\Vert^2_{{\mathbb L}^2} +
   \Bigl(\frac{2\theta-1}{2}\Bigr)^2 \Vert \xi^n \Delta_n W\Vert^2_{{\mathbb L}^2}.
   \end{split}
   \end{equation}
   For the first term $I_a$ we have by (\ref{waa0})
   {
      \begin{equation*}
   \begin{split}
   \frac{\tau\theta}{2} \Im \int_{{\mathscr O}} \big[\vert \phi_1^{n+1} \vert^2 + \vert \phi_1^n\vert^2\big] \xi^{n+1/2}\bar{\xi}^{n+\theta}\,dx
   &= \frac{\tau}{2} \Im \int_{{\mathscr O}} \big[\vert \phi_1^{n+1} \vert^2 + \vert \phi_1^n\vert^2\big] \left(\frac{2\theta-1}{2\theta}\xi^n\right)   \bar{\xi}^{n+\theta}\,dx \\
   &= \frac{\tau(2\theta-1)}{4} \Im \int_{{\mathscr O}}  \big[\vert \phi^{n+1}_1\vert^2 + \vert \phi^n_1\vert^2\big] \xi^{n}(\bar\xi^{n+\theta} - \bar\xi^n)\,dx.
  \end{split}
   \end{equation*}
   }
   In order to estimate the term $I_b$ we use again (\ref{waa0}) to calculate for the relevant term
   {
   \begin{eqnarray*}
   &&\vert \phi_1^{n+1}\vert^2 - \vert \phi_2^{n+1}\vert^2 =  \xi^{n+1} \bar{\phi}_1^{n+1} -
 \phi^{n+1}_2\bar{\xi}^{n+1}  \\
   &&\quad = \frac{1}{\theta}(\xi^{n+\theta}-\xi^{n})\bar{\phi}_{1}^{n+1}+\xi^{n} \bar{\phi}_{1}^{n+1}
   -\frac{1}{\theta}(\bar\xi^{n+\theta}-\xi^{n})\phi_{2}^{n+1}-\bar\xi^{n}\phi_{2}^{n+1}.
   \end{eqnarray*}
   }
   We may then use ${\mathbb H}^1({\mathscr O}) \hookrightarrow {\mathbb L}^\infty({\mathscr O})$ to estimate
   \begin{align*} I \leq&  K\tau^2 \max_{1 \leq j\leq 2}\bigl(\Vert \phi^{n+1}_j\Vert^4_{{\mathbb H}^1} + \Vert \phi^{n}_j\Vert^4_{{\mathbb H}^1}\bigr)\Vert \xi^n\Vert^2_{{\mathbb L}^2} + \frac{1}{4} \Vert \xi^{n+\theta} - \xi^{n}\Vert^2_{{\mathbb L}^2} \\
   &+ K\tau \max_{1 \leq j\leq 2}\bigl(\Vert \phi^{n+1}_j\Vert^2_{{\mathbb H}^1} + \Vert \phi^{n}_j\Vert^2_{{\mathbb H}^1}\bigr)\Vert \xi^{n+\theta} -  \xi^n\Vert^2_{{\mathbb L}^2}.
   \end{align*}
   Now multiply (\ref{waa1}) with $\bm{1}_{\widehat{\Omega}_{\kappa,n+1}}$, where
   \[\widehat{\Omega}_{\kappa,n+1}=\Big\{\omega\in\Omega\Big| \max_{0\leq \ell \leq n+1}\|\phi^{l}\|_{\mathbb{H}^{1}}^{2}\leq \kappa\Big\}
  \supset \tilde{\Omega}_{\kappa,n+1}.\]
  Note that again $\widehat{\Omega}_{\kappa,n+1} \subset \widehat{\Omega}_{\kappa,n}$. We then obtain
  from the above considerations, for $\kappa \leq  \tau^{-\alpha}$ ($\alpha < 1$) and $\tau \leq \tau^*$ sufficiently small  the estimate
  \begin{eqnarray}
  \bm{1}_{\widehat{\Omega}_{\kappa,n+1}} \|\xi^{n+\theta}\|_{{\mathbb L}^2}^{2}
  &\leq& \bm{1}_{\widehat{\Omega}_{\kappa,n}}  \|\xi^{n}\|_{{\mathbb L}^2}^{2}  \Bigl(1 +
    \Bigl(\frac{2\theta-1}{2}\Bigr)^2 \Vert \Delta_n W\Vert^2_{{\mathbb H}^1}  + K\tau^{2(1-\alpha)} \Bigr).
  \end{eqnarray}
  We may now proceed by induction: for $n=0$ we have $\xi^0 = 0$ $P$-a.s.~on $\widehat{\Omega}_{\kappa,0}$, in particular. Therefore, we may deduce $\theta^2 \bm{1}_{\widehat{\Omega}_{\kappa,1}} \|\xi^{1}\|_{{\mathbb L}^2}^{2} = 0$, and hence $\xi^{1} = 0$ on $\widehat{\Omega}_{\kappa,1}$. Correspondingly, we find
  $$ \bm{1}_{\widehat{\Omega}_{\kappa,n+1}}\Vert \xi^{n+1}\Vert^2_{{\mathbb L}^2} = 0 \quad P\mbox{-a.s.}
  \qquad (0 \leq n \leq M).$$
  For $\Omega_{\kappa} := \Omega_{\kappa,M}$, (\ref{Y}) implies $\lim_{\tau\rightarrow 0}P(\widehat{\Omega}_{\kappa})=1$
  for $\kappa \propto \log(\tau^{-\varepsilon})$ for every $0 <\varepsilon <1$, and thus we retrieve uniqueness of solutions for the limiting problem \eqref{sdd1} with $\lambda=-1$. --- In the practical studies performed in Section~\ref{num_exp} we had that {\em all} simulations are included in $\widehat{\Omega}_{\kappa}$ for some
  moderate $\kappa = O(1)$; see Figure 2, (c).
  \end{remark}}

\section{Numerical Experiments}\label{num_exp}
In the previous sections, we showed stability and convergence (${\mathscr O} \subset {\mathbb R}^d$), and convergence with local
rates (${\mathscr O} \subset {\mathbb R}^1$)  for the $\theta$-scheme (\ref{algorithm 2}) and the defocusing nonlinearity
($\lambda = -1$) in (\ref{sdd1}) with spatially regular noise.
The following example is chosen to computationally study stability and rates of convergence for different values $\theta_i \in \{ \frac{1}{2}, \frac{1}{2} + \sqrt{\tau}, 1 \}$ in the $\theta$-scheme (\ref{algorithm 2})  to solve the stochastic cubic Schr\"odinger equation ($\lambda=-1$) with colored in space noise.
%
%
%
In order to better clarify the interplay of nonlinearity and noise, we scale the
noise in (\ref{sdd1}) and (\ref{algorithm 2}) by a parameter $\nu \in {\mathbb R}$.

\medskip

\begin{example}\label{exa_1}
Let ${\mathscr O} = (-1,1)$, $T = \frac{1}{4}$, and $\psi_0({x}) = \sin^2(\pi x)$. For $1 \leq L \leq 8$, and $\{ \beta_{\ell};\,  1 \leq \ell \leq L\}$ a family of independent ${\mathbb R}$-valued Wiener processes, consider the real-valued Wiener process $W \equiv \{ W_t;\, t \geq 0\}$, $W(t) = \sum_{\ell=1}^L \frac{1}{\ell} \sin (\pi \ell x) \beta_\ell(t)$, and $\nu = \sqrt{2}$ in (\ref{sdd1}).
We use the $\theta$-scheme (\ref{algorithm 2}) with values $\theta_1 = \frac{1}{2}$, $\theta_2 =  \frac{1}{2} + \sqrt{\tau}$, and
$\theta_3 = 1$ for the numerical approximation. Let $I_{\tau} = \{ t_n;\, 0 \leq n \leq M\}$ be the uniform discretization of $[0,T]$ of size $\tau > 0$, and
${\mathscr T}_h$ be the uniform triangulation of ${\mathscr O}$ of size $h = \frac{1}{256}$,
on which the lowest-order ${\mathbb H}^1$-conforming finite element discretization of (\ref{algorithm 2}) is realized.
The reference values (for Figure~\ref{figure1} a) and b)) are generated for the smallest
mesh size $\widetilde{\tau} = 2^{-14}$. Newton's method is used, and $500$ realizations are chosen to approximate the expectations.
\end{example}
%
\begin{figure}[htbp]
  \centering
  \begin{minipage}[b]{5.2 cm}
   \begin{center} a) $\nu = 0$  \end{center}
   \centering
    \includegraphics[width=1.078\textwidth]{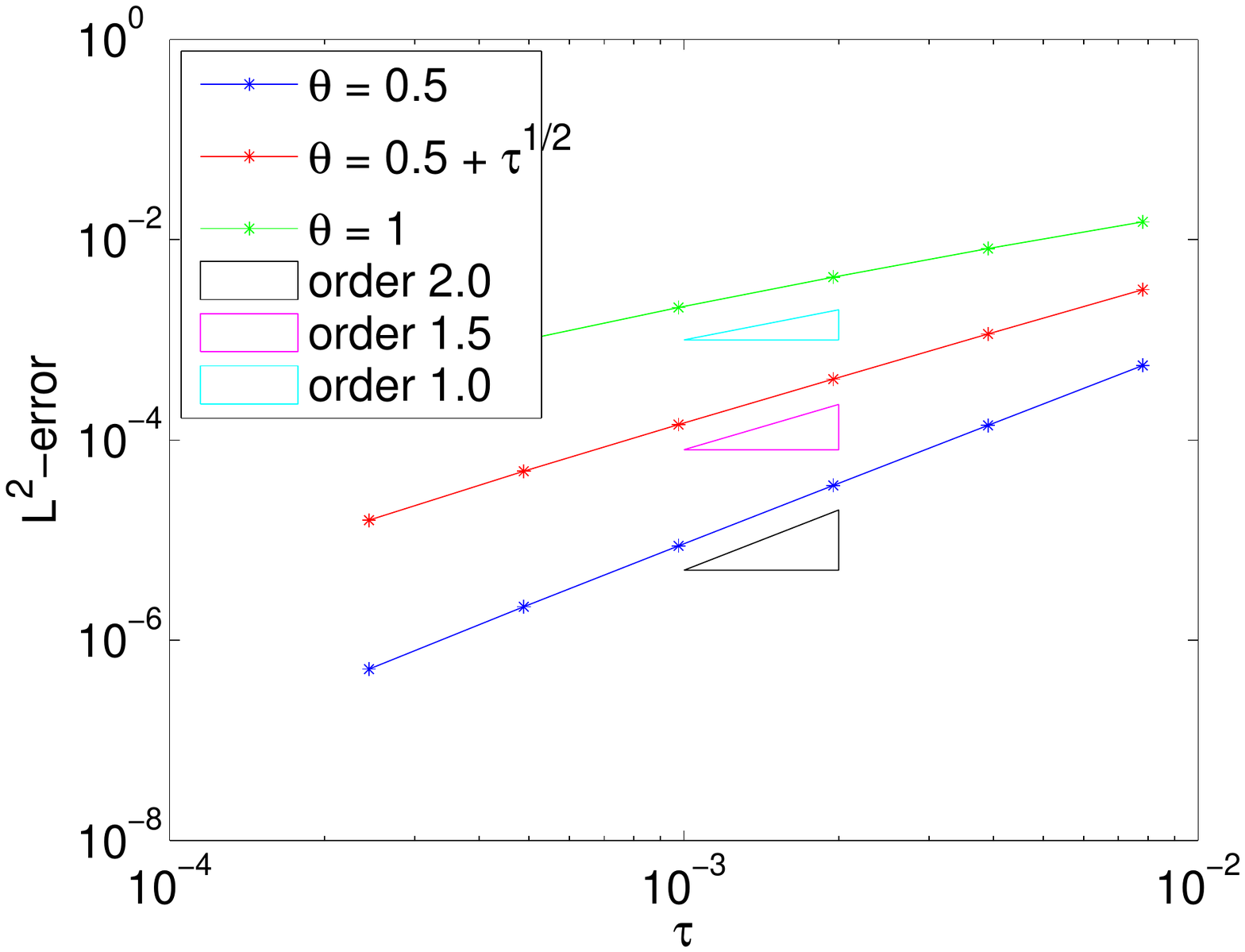}
    \end{minipage}
    \begin{minipage}[b]{5.2 cm}
     \begin{center} b) Trajectories at $x=0$   \end{center}
     \centering
    \includegraphics[width=1.078\textwidth]{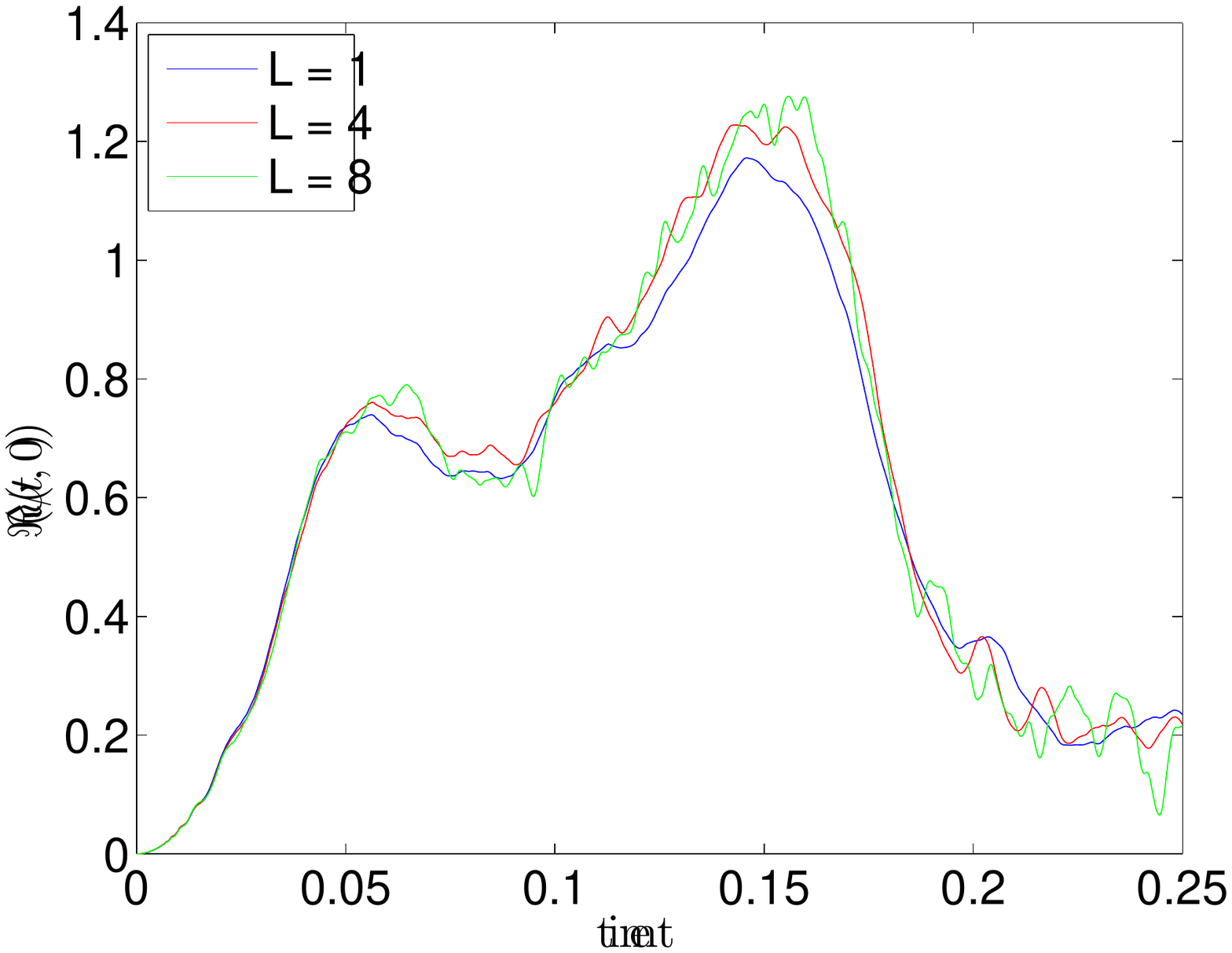}
  \end{minipage}
      \begin{minipage}[b]{5.2 cm}
     \begin{center} c) $\nu = \sqrt{2}$    \end{center}
     \centering
    \includegraphics[width=1.078\textwidth]{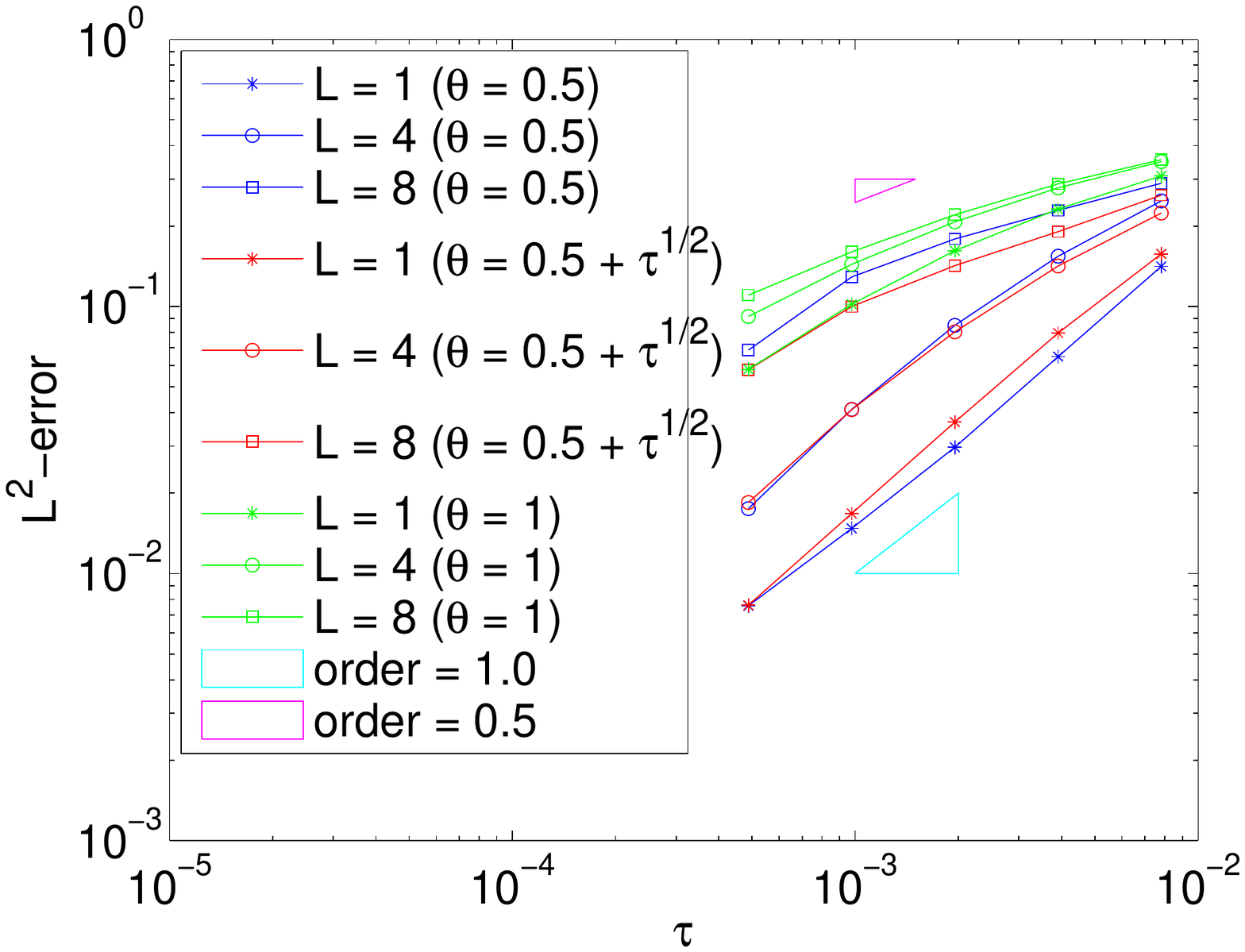}
  \end{minipage}
  \caption{ a) Rates of convergence for the deterministic case in the norm $\Vert \psi(T) - \phi^{{[\frac{T}{\tau}]}}\Vert_{{\mathbb L}^2}$ ($d=1$, $T = \frac{1}{4}$, $\nu = 0$, $h = \frac{1}{256}$, $\tau \in \{ 2^{-i};\, 7 \leq i \leq 11\}$).
  b) Trajectory at $x = 0$ for $L\in \{ 1,4,8\}$ ($\theta = \frac{1}{2} + \sqrt{2}$).
  c) Rates of convergence for the
  stochastic NLS driven by $W(t) = \sum_{\ell=1}^L \frac{1}{\ell} \sin (\pi \ell x) \beta_\ell(t)$ in the norm $\bigl(E[\Vert \psi(T) - \phi^{[\frac{T}{\tau}]}\Vert^2_{{\mathbb L}^2}]\bigr)^{1/2}$ ($d=1$, $T = \frac{1}{4}$, $\nu = \sqrt{2}$, $\tau \in \{ 2^{-i};\, 7 \leq i \leq 11\}$).}
   \label{figure1}
\end{figure}
We consider $\nu = 0$ first: Figure~\ref{figure1} a) shows order $2$ for the ${\mathbb L}^2$-error of the $\theta$-scheme for $\theta = \frac{1}{2}$; the order
drops to $1.5$ for $\theta = \frac{1}{2} + \sqrt{\tau}$, and to order $1$ for the implicit Euler scheme ($\theta = 1$). The observations are different in the stochastic case ($\nu = \sqrt{2}$) where different sorts of Wiener processes depending on $L$ are used: as is displayed in Figure~\ref{figure1} c),  the strong order of convergence for $\theta \in \{ \theta_1, \theta_2\}$ drops from approximately $1$ to $0.5$ for values $1$ to $8$ of $L$. The choice $\theta = \theta_3$ is exceptional since we obtain the approximate order $0.5$ for all values of $L$.  Figure~\ref{figure1} b) compares typical
trajectories for $L \in \{1,4,8\}$.
%
%
%
%
\begin{figure}[htbp]
  \centering
  \begin{minipage}[b]{5.2 cm}
   \begin{center} a) $t_n \mapsto E[\Vert \phi^n\Vert^2_{{\mathbb L}^2}]$  \end{center}
   \centering
    \includegraphics[width=1.078\textwidth]{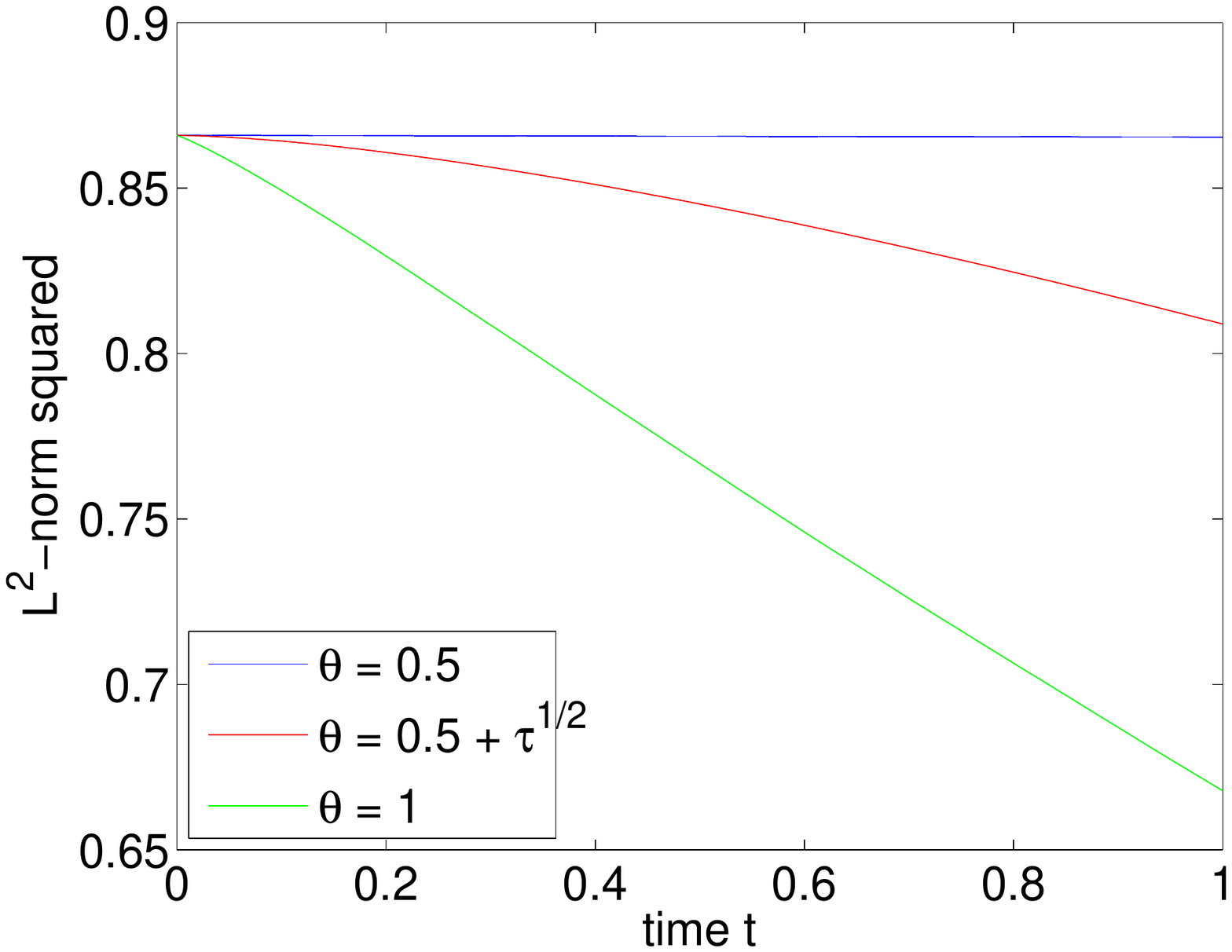}
    \end{minipage}
    \begin{minipage}[b]{5.2 cm}
     \begin{center} b) $t_n \mapsto E[\Vert \phi^n\Vert^2_{{\mathbb L}^2}]$   \end{center}
     \centering
    \includegraphics[width=1.078\textwidth]{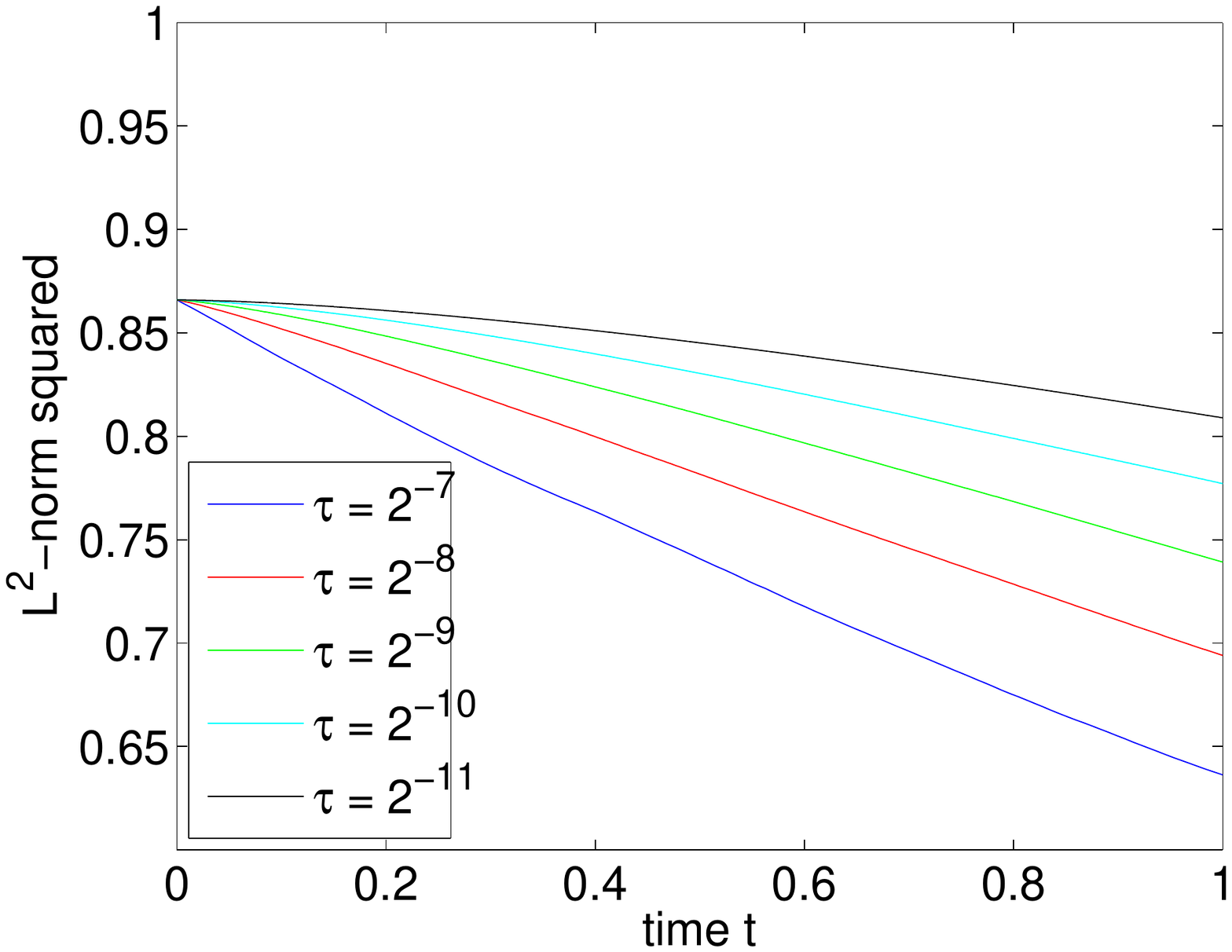}
  \end{minipage}
      \begin{minipage}[b]{5.2 cm}
     \begin{center} c) distribution    \end{center}
     \centering
    \includegraphics[width=1.078\textwidth]{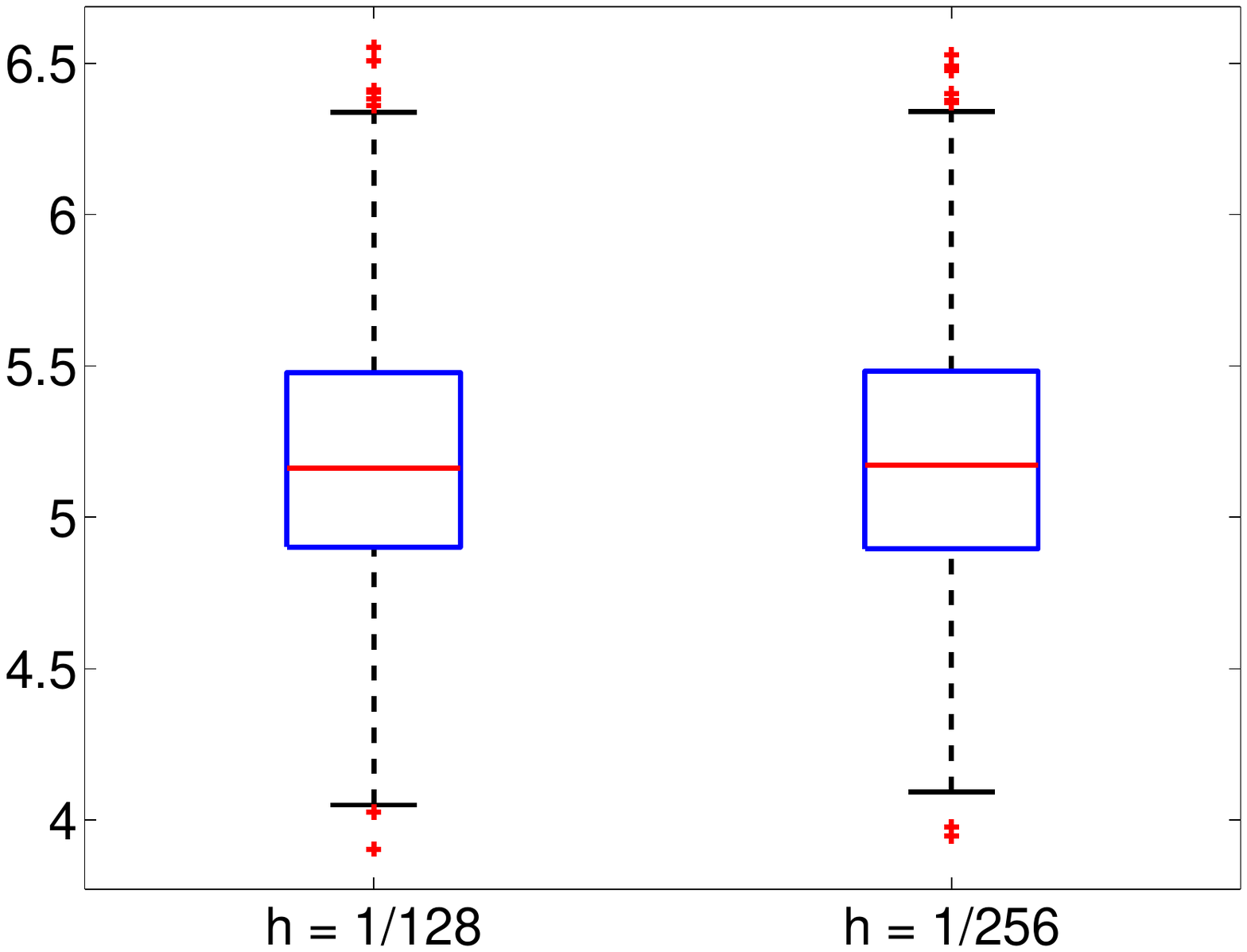}
  \end{minipage}
  \caption{   a) Squares of (averaged) ${\mathbb L}^2$-norm for $\theta \in \{\theta_1, \theta_2, \theta_3\}$
  ($d=1$, $T = \frac{1}{4}$, $\nu = \sqrt{2}$, $L=8$, $h = \frac{1}{256}$, $\tau = 2^{-8}$).
  b) Squares of the (averaged) ${\mathbb L}^2$-norm for $\theta = \frac{1}{2} + \sqrt{\tau}$ and different
  step sizes $\tau$ ($d=1$, $T = \frac{1}{4}$, $\nu = \sqrt{2}$,  $L=8$, $h = \frac{1}{256}$, $\tau \in \{ 2^{-i};\, 7 \leq i \leq 11\}$).
  c)
  Distribution of
 $\max_{0 \leq n \leq M} \Vert \nabla \phi^n\Vert_{{\mathbb L}^2}$ for $\theta = \frac{1}{2} + \sqrt{\tau}$, with median (5.1) and lower (4.8) and upper (5.5) quartile
  ($d=1$, $T = \frac{1}{4}$, $\nu = \sqrt{2}$, $L=8$, $\tau = 2^{-10}$).}
  \label{afigure2a}
\end{figure}

The box plot in Figure 2 c) complements this result: the set $\widehat{\Omega}_{\kappa}  :=
\bigl\{ \max_{0 \leq n \leq M} \Vert \nabla \phi^n \Vert_{{\mathbb L}^2} \leq \kappa\bigr\}$ is $\Omega$ for values of $\kappa$ exceeding approximately $6.5$. Figures 2 a), b) study the conservation of mass for the three schemes: we observe a mild decrease for
$\theta = \frac{1}{2} + \sqrt{\tau}$, which is far more pronounced for $\theta = 1$.
  \section*{Acknowledgment}

  This work was initiated when A.~P. visited the Academy of Mathematics and Systems Science, Chinese Academy of Sciences (Beijing) in March 2014. The authors are grateful to Christian Schellnegger (U T\"ubingen) for providing the computational results, and the careful reading of one referee to improve the quality of the paper. C.~C and J.~H are supported by National Natural Science Foundation of China (NO.~91130003, NO.~11021101 and NO.~11290142). The authors are grateful to the helpful suggestions of one anonymous referee.

\end{document}